\newcommand{\dsp}{\displaystyle}
\theoremstyle{thmstyleone}%
\newtheorem{theorem}{Theorem}
\newtheorem{acknowledgement}[theorem]{Acknowledgement}
\newtheorem{corollary}[theorem]{Corollary}
\newtheorem{definition}[theorem]{Definition}
\newtheorem{example}[theorem]{Example}
\newtheorem{lemma}[theorem]{Lemma}
\newtheorem{notation}[theorem]{Notation}
\newtheorem{problem}[theorem]{Problem}
\newtheorem{proposition}[theorem]{Proposition}
\newtheorem{remark}[theorem]{Remark}
\begin{document}

\title{Cubic Oscillator: Geometric Approach and Zeros of Eigenfunctions}
\author{Faouzi Thabet, Gliia Braek, Marwa Mansouri, and Mondher Chouikhi }
\maketitle

\begin{abstract}
In this paper, we give a geometric approach to the cubic oscillator with three distinct turning points based on the $\mathcal{D\diagup SG}$\emph{\
correspondence }introduced in \cite{Thabet+al}. The existence of quantization conditions, depending on extra data for the potential, is related to some
particular critical graphs of the quadratic differential $\lambda ^{2}\left(z-a\right) \left( z^{2}-1\right) dz^{2}$ where $\lambda$ is a non vanishing complex number, $a\in \mathbb{C}\diagdown \left\{ -1,1\right\}$. We investigate this geometric approach on
two levels: the first level is studying an inverse spectral problem related to cubic oscillator. The second level describes the zeros locations of
eigenfunctions related to this oscillator. Our results may provide a geometric proof of some questions related to cubic potential case. 
\end{abstract}

\textit{MSC2020 Mathematics subject classifications:} 

\textit{Primary 34E46,34E65; Secondary 34E20,34L10}

\textit{Keywords and phrases: }Schr\"{o}dinger equation. PT-symmetry. WKB
method. Quadratic differentials. Stokes line. Half-plane domain. Band domain.
\tableofcontents

\section{Introduction}

This paper is a continuation of previous work highlights the deep
relations between properties of solutions of linear ordinary differential
equation (ODE) with polynomial coefficients in complex domain and the
structure of the critical graph of related quadratic differential. The \textit{Stokes} geometry is relevant in studying asymptotics behavior of solutions of certain eigenvalue problems (EP) related to these ODE. In fact, Stokes graph related to an ODE is crucial to determine the maximal domain of applicability of complex WKB method \cite{fed1} and \cite{masero}.\\
\newline
In this work, we improve this relationship between Stokes graph and solutions of ODE via the example: 
\begin{equation}
-y^{\prime \prime }(z)+\lambda ^{2}p_{a}(z)y(z)=0  \label{first ODE}
\end{equation}
where the potential $p_{a}(z)=(z^{2}-1)(z-a)$, $\lambda $ and $a$ are
complex numbers. This equation has unique (irregular) singularity at
infinity. An eigenvalue problem (EP) related to \eqref{first ODE} is given by boundary
conditions at this point. In \cite{Thabet+al}, a full classification of the Stokes graph related to \eqref{first ODE} was given via the so called $\mathcal{D\diagup SG}$\emph{\ correspondence}. The authors in \cite{Thabet+al} constructed a set of curves ${\Sigma }_{\lambda }$ that divide the complex plane $\mathbb{C}$ into $n_{\lambda }$ simply connected domains $\left( \Omega _{i}\right)_{1\leq i\leq n_{0}}$. The critical graph of the quadratic differential $
-\lambda ^{2}p_{a}(z)dz^{2}$ changes as $a$ goes from $\Omega _{i}$ to $\Omega_{j} $, with $i\neq j$, crossing ${\Sigma }_{\lambda }$. These
mutations of the critical graph affect the number of eigenvalue
problems. We will give a short summary of \cite{Thabet+al} in the Section ( \ref{sub1}).\\
\newline
Primary references, to the best of our knowledge, that build a bridge between the critical graph of quadratic differential and the asymptotics
study of such type of (EP) are \cite{fed1} and \cite{fed2}, by M.V.Fedoryuk. He used the complex WKB, reformulated from different point of view by Voros \cite{voros}, to study an eigenvalue problem with spectral parameter $\lambda $ and boundary conditions imposed on the real line:
\begin{equation}
 {-y^{\prime \prime }(z)+\lambda ^{2}P(z)y(z)=0}{,\quad y(-\infty
,\lambda )=y(+\infty ,\lambda )=0}  \label{fed equation}
\end{equation}
where $\lambda $ is a non-vanishing complex number and $P$ is an entire
function in the complex plane $
\mathbb{C}$. The author in \cite{fed1} established an algorithm that enables one to continue an asymptotic solution of \eqref{fed equation}, known in a domain or along a line (the real line in the case of \eqref{fed equation}), over the whole  complex $z-$plane. The boundary conditions are equivalent to $y\in L^{2}(\mathbb{R})$. The solutions used by Fedoryuk are \emph{subdominant} in some regions in the complex plane (\emph{canonical domains}, See  \ref{topology Stokes}). The spectrum of \eqref{fed equation}, namely the set of all non-vanishing complex numbers $\lambda $ such that there exists a non trivial $L^{2}(\mathbb{R})-$ solution, is discrete. In the case when $P$ is a polynomial with real coefficients and with at least two simple real zeros, a full asymptotic study of the spectrum is given in \cite[chapter 3 \S 5]{fed1}. In the case where $P$ is a complex polynomial, some additional conditions were established on the \textit{Stokes complexes }related to the (ODE). In fact, it was claimed that the spectrum of \eqref{fed equation}, for the case of complex polynomial, tends to accumulate near only finite rays in the complex plane with slope $
\tan \theta _{1}$, $\tan \theta_{2}...\tan \theta_{k}$ (see again \cite[
Chapter 3 \S 6]{fed1} and \cite{shapiroev}).\\
\newline
Conversely, we think that, for a given angle $\theta $, describing the
spectrum accumulation conditions \textit{( }$\theta $\textit{\ is an \emph{accumulation direction;} for definition see  \ref{accumulation direction})} will be very useful, for example to the exact WKB method based on Borel resummation technics \cite{kawai}. In fact, the Borel summability of asymptotics solutions, in a given direction, is guaranteed by the non existence of certain Stokes lines ( called \emph{finite Stokes}\textit{\ \emph{line} }or \emph{short trajectories}), for more details see again \cite
{kawai} and \cite{voros}.\\
\newline
To be more precise, we treat an inverse problem related to ODE with polynomial potential:

\begin{problem}
\label{main problem}For a given $\theta \in \lbrack 0,\pi \lbrack $ and $d$ integer $\geq 3$, can we construct a polynomial $P$ with degree $d$ and at least two distinct zeros such that the differential equation \eqref{first ODE} admits a subdominant solution in at least two different Stokes regions. What are the conditions for $\theta $ is an
"accumulation direction" (see again definition \ref{accumulation direction}) of the spectrum and what about the locations of the zeros of corresponding eigenfunctions?
\end{problem}
In this paper, we try to give answers to these questions for $d=3$. We use results from \cite{Thabet+al}, \cite{fed1} to construct a set of polynomials $p_{a}(z)=(z-a)(z^{2}-1)$ solving problem  \ref{main problem}. We will give a short summary of Fedoryuk's work in (see Subsection  \ref{sub2}). For the general theory of quadratic differentials the reader can see \cite{strebel, jenkins, jenk-spencer, bridgeland+smith}.

A similar result was obtained in \cite{shin2}. The author had studied an
inverse problem related to an eigenvalue problem of type Sturm-Liouville 
\cite[chapter 6]{sibuya}. The methods used there are analytic. In \cite[
theorem 1.6]{shin2}, the asymptotic distribution of the eigenvalue was
investigated to reconstruct \textit{"some"} coefficients of the polynomial
potential. Our method is different in the case $d=3$. We use geometric
method to construct a \textit{"full"} polynomial potential. We hope to
generalize this method to higher degree in forthcoming project.

The paper is organized as follows, in section(\ref{sub1}), we give a preliminary results from \cite{Thabet+al} to define the set ${\Sigma}_{\lambda }$ using a particular quadratic differential. These curves play a crucial role in the classification of (EP). We close this section by a short review of complex WKB method from \cite{fed1,{fed2}}.

In Section \ref{main parag}, the first main result Theorem\ref{main result1} was established and the $\mathcal{D\diagup SG}$ correspondence was investigated to give a geometric classification to eigenvalue related to cubic oscillator. In particular, a necessary condition to the accumulation of the spectrum was proven which answers a question in \cite{shapiroev}.

In Section \ref{zeros section}, the distribution of zeros of eigenfunctions was studied and the second main result (\ref{domain zeros}) was established. The zeros locations are some particular Stokes lines. Our results may be considered as a generalization of results obtained in \cite{eremenko3,shin2} in the case of PT-symmetric and self-adjoint potential arising from Sturm-Liouville problem. The results in Sections (\ref{main parag}, \ref{zeros section}) give a complete answer to  Problem \ref{main problem}. 

In Section \ref{applications}, we give two applications to our results. The first is the construction of cubic oscillator with a non trivial solution with prescribed number of zeros. This problem was introduced in \cite[problem $2.71$]{brann} in a general form. The second is the description of the exact zeros locations of a rescaled eigenfunction raising from a Sturm-Liouville problem studied in \cite{trinh}.
The Section \ref{proofs} is devoted to the proofs of our results. We close the paper by an Appendix  \ref{Appendix} that contains a proof to WKB formulas cited in subsection \ref{sub2}.

\section{Preliminaries and notations}

In all this paper we denote: $\overset{\bullet }{
\mathbb{C}}=\mathbb{C}
\setminus \left\{ -1,1\right\} $, $\mathbb{C}_{+}=\left\{ z\in 
\mathbb{C}\;\,\,\Im z>0\right\} $ and $
\mathbb{C}_{-}=\left\{ z\in \mathbb{C};\,\,\Im z<0\right\} $.

\subsection{Review of $\mathcal{D\diagup SG}$ correspondence\label{sub1}}

\subsubsection{A parametric Quadratic Differential}

In this subsection, we recall some notation and results from \cite{Thabet+al}. Recall that a quadratic differential in the Riemann sphere $
\widehat{\mathbb{C}}$ is given by an expression $\varpi (z)dz^{2}$, where $\varpi $ is a
rational function. By \textit{parametric quadratic differentials, }we mean a family of such
differentials that depend on extra data. Let $\mathcal{D}$ be the set of these
data.\textit{\ }We treat the case 
\begin{equation}\label{our quadratic}
    \dsp\varpi _{a,\lambda }(z) =-\lambda ^{2}(z-1)(z+1)(z-a)=-\lambda^{2}p_{a}(z); \quad
\mathcal{D} \mathcal{=}\left\{ \left( \theta ,a\right) \in \lbrack 0,\pi
\lbrack \times \overset{\bullet }{\,\,\mathbb{C}}\,\,\right\} ; 
\end{equation}

where $\lambda =r\exp (i\theta )\in \mathbb{C}^{\ast }$.\\
\newline
Critical points of $\varpi _{a,\lambda }$ are its zero's and poles in $
\widehat{\mathbb{C}}$. Zeros are called \emph{finite critical points}, while poles of order 2
or greater are called \emph{infinite critical points}. All other points of $\widehat{\mathbb{C}}$ are called \emph{regular points}. Finite critical points of the quadratic
differential $\varpi _{a,\lambda }$ are $\pm 1$ and $a$, as simple zeros;
while, by the change of variable $y=1/z$, $\varpi _{a,\lambda }$ has a
unique infinite critical point that is located at $\infty $ as a pole of
order $7$. \emph{Vertical trajectories }of the quadratic differential $\varpi_{a,\lambda }$ are the level curves defined by 
\begin{equation}
\mathcal{\Re }\int^{z}e^{i\theta }\sqrt{p_{a}(z)}dt=\emph{const;}
\label{eq integ}
\end{equation}
or equivalently

\begin{equation*}
\exp (2i\theta )p_{a}(z)dz^{2}<0.
\end{equation*}
If $z\left( t\right) ,t\in \mathbb{R}$ is a vertical trajectory, then the function 
\begin{equation*}
t\longmapsto \Im  \int^{t}e^{i\theta }\sqrt{p_{a}\left( z\left( s\right)
\right) }z^{\prime }\left( s\right) ds
\end{equation*}
is monotonic.\\
\newline
The \emph{horizontal trajectories }are obtained by replacing $\Re $ by $\Im  $
in equation \eqref{eq integ}. Horizontal and vertical trajectories of the
quadratic differential $\varpi _{a,\lambda }$ produce two pairwise
orthogonal foliations of the Riemann sphere $\widehat{\mathbb{C}}$. A trajectory (horizontal or vertical) passing through a critical point
of $\varpi _{a,\lambda }$ is called \emph{critical trajectory}. In
particular, if it starts and ends at a finite critical point, it is called 
\emph{finite critical trajectory} or \emph{short trajectory}, otherwise, we
call it an \emph{infinite critical trajectory}.

\emph{Stokes lines }of \eqref{first ODE} are critical vertical trajectories
of $\varpi _{a,\lambda }$ defined by the equation
\begin{equation*}
\mathcal{\Re }\int_{z_{0}}^{z}e^{i\theta }\sqrt{p_{a}(z)}dt=\emph{0}\text{ \
where }z_{0}\in \left\{ -1,1,a\right\} \emph{;}
\end{equation*}
and \emph{anti-Stokes lines }are critical horizontal trajectories of $\varpi
_{a,\lambda }$. The closure of the set of finite and infinite critical
trajectories, that we denote by $\Gamma _{a,\lambda }$ is called the \emph{
critical graph, }or \emph{Stokes graph}. The local and global structure of
the trajectories are studied in \cite{strebel, jenkins}.

\begin{remark}
\begin{enumerate}
\item By a linear transformation, we can reduce any cubic quadratic
differential $\varpi (z)=A(z-z_{0})(z-z_{1})(z-z_{2})$ to the form \eqref{our quadratic}.

\item From \eqref{eq integ}, the critical graph of a quadratic differential
is invariant under multiplication by a positive scalar, so we use the
notations $\varpi _{a,\lambda }(z)dz^{2}=\varpi _{a,\theta }(z)dz^{2}$ and $
\Gamma _{a,\lambda }=\Gamma _{a,\theta }$.

\item By the relations
\begin{align}
\varpi _{a,\theta }(z)dz^{2}& >0\Longleftrightarrow \varpi _{\theta +\frac{
\pi }{2},-a}(z)dz^{2}>0,  \label{symmetric relations} \\
\varpi _{a,\theta }(z)dz^{2}& >0\Longleftrightarrow \varpi _{\frac{\pi }{2}
-\theta ,-\overline{a}}(z)dz^{2}>0,  \notag
\end{align}
we may focus on the case $\theta \in $ $[0,\frac{\pi }{4}]$ without the loss of  generality.

\item The \emph{critical (orthogonal critical) directions }are given by
\begin{equation}
\begin{array}{c}
\alpha _{j}=\frac{2j\pi -2\theta }{5} \\ 
\alpha _{j}^{\bot }=\frac{(2j-1)\pi -2\theta }{5}
\end{array}
;\,\,j=0,...,4  \label{critical directions}
\end{equation}
There is a neighborhood $\mathcal{V}$ of $\infty $, such that each
horizontal (vertical) trajectory entering $\mathcal{V}$ stays in $\mathcal{V}
$ and tends to $\infty $ following one of $\alpha _{j}$ $\left( \alpha
_{j}^{\perp }\right) $.
\end{enumerate}
\end{remark}
The structure of the set $\widehat{\mathbb{C}}\setminus \Gamma _{a,\theta }$ depends on the local and global behaviors of
trajectories. It consists of a finite number of domains called\emph{\ domain
configurations, }or \emph{Stokes regions} of $\varpi _{a,\theta }$. Jenkins
Theorem (see \cite[Theorem3.5]{jenkins},\cite{strebel}) asserts that there
are two kinds of Stokes regions of $\varpi _{a,\theta }$ :

\begin{itemize}
\item Half-plane domain : It is swept by trajectories diverging to $\infty$
in its two ends, and along consecutive critical directions. Its boundary
consists of a path with two unbounded critical trajectories, and possibly a
finite number of short ones. It is conformally mapped to a vertical half
plane $\left\{ w\in\mathbb{C}:\Re w>c\right\} $ for some real $c$ by the function $\int_{z_{0}}^{z}e^{i
\theta}\sqrt{p_{a}\left( t\right) }dt$ with suitable choices of $z_{0}\in\mathbb{C}$ and the branch of the square root;

\item strip, or band domain : It is swept by trajectories which both ends
tend $\infty $. Its boundary consists of a disjoint union of two paths, each
of them consisting of two unbounded critical trajectories diverging to $
\infty $, and possibly a finite number of short trajectories. It is
conformally mapped to a vertical strip $\left\{ w\in \mathbb{C}:c_{1}<\Re w<c_{2}\right\} $ for some reals $c_{1}$ and $c_{2}$ by the
function $\int_{z_{0}}^{z}e^{i\theta }\sqrt{p_{a}\left( t\right) }dt$ with
suitable choices of $z_{0}\in \mathbb{C}$ and the branch of the square root.
\end{itemize}

\begin{remark}
\begin{enumerate}
\item The Stokes regions are unbounded and simply connected. We always order the half-plane domains \textit{\ }$(H_{k})_{0\leq k\leq 4}$ and  
\textit{strip domain }$(B_{k})_{0\leq k\leq 1}$ anticlockwise \textit{.} In the general case if $\deg p_{a}=d$ in \eqref{first ODE}, then the Stokes lines divide the $z$-plane into $d+2$ domains of half-plane type and $N$ strip domains such that $0\leq N\leq d-1$. The connected components of the Stokes graph are called the \emph{Stokes complexes }denoted $\Delta _{a,\theta }\mathit{.}
$

\item A big challenge in asymptotic theory of ODE in complex domains (in a
Riemann surface more generally) is to study Stokes graph mutations under
changes of parameters in the data space $\mathcal{D}$, see \cite{iwaki, chouikhiquartic, solynin, chouikhialgebraic}. In our work we
treat the cubic polynomial case. When the potential is a meromorphic
function, the situation is more complicated, see again \cite{solynin, chouikhialgebraic}.

\item The table below give a correspondance between the language used in the
theory of quadratic differential and the language used in WKB asymptotic
theory of ODE
\begin{equation*}
\begin{tabular}{|l|l|}
\hline
\textbf{Q.D language} & \textbf{complex} \textbf{WKB language} \\ \hline
finite critical point & turning point \\ \hline
infinite critical point & singular point \\ \hline
horizontal critical trajectory & anti-Stokes line \\ \hline
vertical critical trajectory & Stokes line \\ \hline
short trajectory & finite anti-Stokes line \\ \hline
short vertical trajectory & finite Stokes line \\ \hline
critical graph & Stokes graph \\ \hline
domain configuration & Stokes region \\ \hline
critical direction & anti-Stokes direction \\ \hline
orthogonal critical direction & Stokes direction \\ \hline
\end{tabular}
\end{equation*}
In particular for the traditional WKB analysis, turning points are only
zeros of the potential (quadratic differential) but in \cite{koike}, it is
revealed that simple poles also play a similar role as turning points which is clearly noted in our table.
\end{enumerate}
\end{remark}

\subsubsection{Level Sets\label{level sets}}

For $\theta \in \left[ 0,\pi /2\right[ $, we consider the sets
\begin{align*}
\Sigma _{1,\theta }& =\left\{ a\in\mathbb{C}\setminus \left] -\infty ,-1\right] :\Re \left( \int_{\left[ 1,a\right]
}e^{i\theta }\sqrt{p_{a}\left( z\right) }dz\right) =0\right\} ; \\
\Sigma _{-1,\theta }& =\left\{ a\in \mathbb{C}\setminus \left[ 1,+\infty \right[ :\Re \left( \int_{\left[ -1,a\right]
}e^{i\theta }\sqrt{p_{a}\left( z\right) }dz\right) =0\right\} ; \\
\Sigma _{\blacktriangle ,\theta }& =\left\{ a\in \mathbb{C}
\setminus \left[ -1,1\right] :\Re \left( \int_{\left[ -1,1\right]
}e^{i\theta }\sqrt{p_{a}\left( z\right) }dz\right) =0\right\} ; \\
\Sigma _{\theta }& =\Sigma _{\blacktriangle ,\theta }\cup \Sigma _{-1,\theta
}\cup \Sigma _{1,\theta };
\end{align*}
A full description of the three sets is given in \cite[Lemma 1 and
Proposition 2]{Thabet+al} for $\theta \in \left[ 0,\pi /2\right[ $, in
particular the set $\Sigma _{\theta }$ divide the complex plane $\mathbb{C}$ into $n_{\theta }$ simply connected domains $\left( \Omega _{i}\right)
_{1\leq i\leq n_{\theta }}$ and by \eqref{symmetric relations} we have for $\theta \in \left[ 0,\frac{\pi }{4}\right] $:

\begin{itemize}
\item If $\theta \neq 0$, $\Sigma _{1,\theta }$ is formed by three smooth curve $\Sigma _{1,\theta }^{\prime }$, $\Sigma _{1,\theta }^{\prime \prime }$
and $\Sigma _{1,\theta }^{\blacktriangle }$. The two first curves are locally
orthogonal at $z=1$ and 
\begin{eqnarray*}
\Sigma _{1,\theta }^{r} &=&\Sigma _{1,\theta }^{\prime }\cap \mathbb{C}_{+}=\left\{ z\in \Sigma _{1,\theta };\,\,\Im z>0,\Re z\geq
1\right\} \\
\Sigma _{1,\theta }^{l} &=&\Sigma _{1,\theta }^{\prime \prime }\cap \mathbb{C}_{+}=\left\{ z\in \Sigma _{1,\theta };\,\,\Im z>0,\Re z\leq
1\right\} .
\end{eqnarray*}
The third curve $\Sigma _{1,\theta }^{\blacktriangle }$ starting from some
point $s_{1,\theta }^{\blacktriangle }\in \left] -\infty ,-1\right] $. The
five rays defining $\Sigma _{1,\theta }$ diverge to $\infty$ in different directions (see Figure (\ref{theta=pi/4 1,-1})).
\begin{figure}[h]
    \centering
    \includegraphics[width=0.4\linewidth]{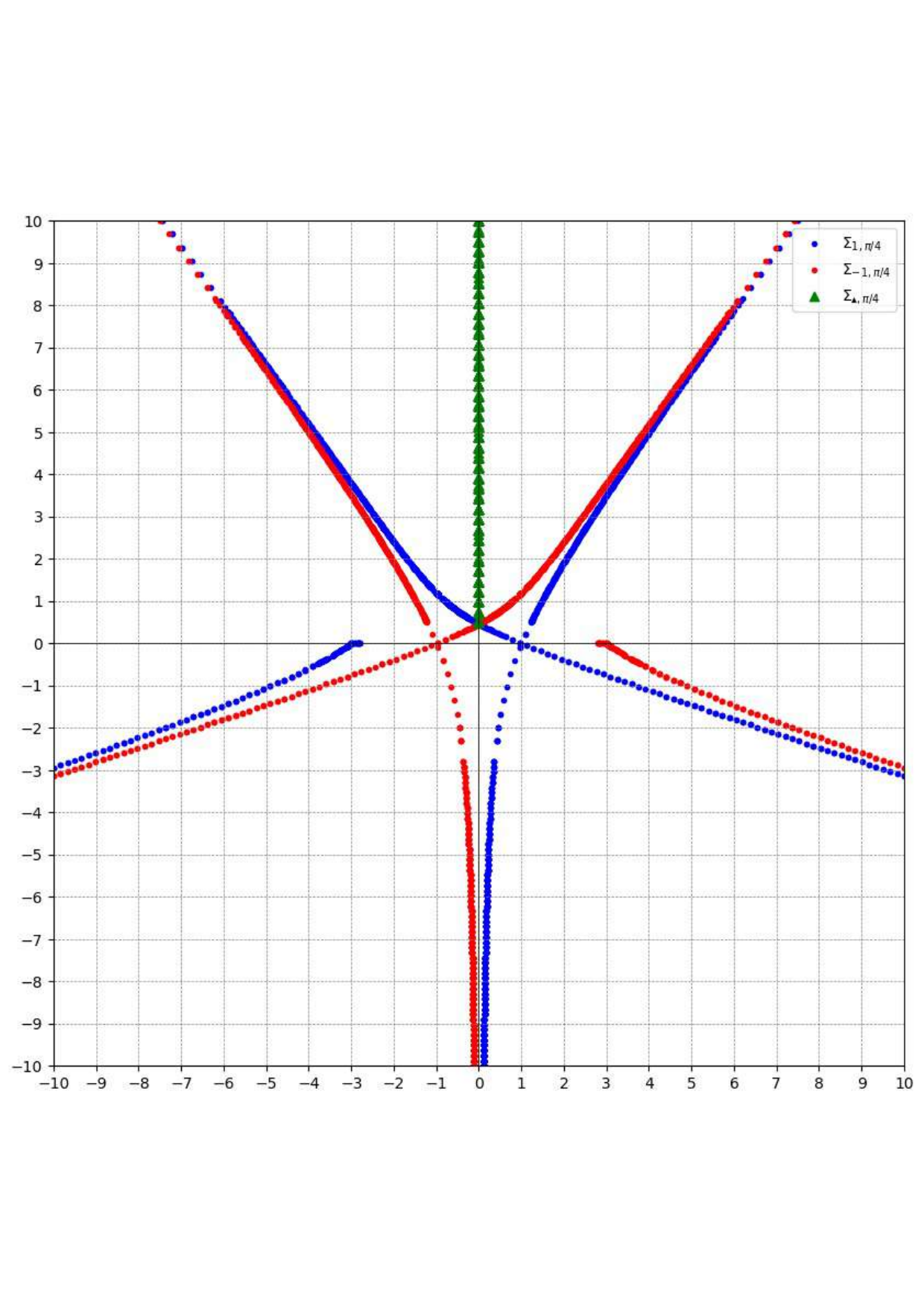}
    \caption{Exact plot of Sigma curves for $\theta=\frac{\pi}{4}$}
    \label{theta=pi/4 1,-1}
\end{figure}

\item If $\theta \neq 0$, $\Sigma _{-1,\theta }$ is formed by three smooth
curves $\Sigma _{-1,\theta }^{\prime }$, $\Sigma _{-1,\theta }^{\prime \prime}$ and $\Sigma _{-1,\theta }^{\blacktriangle }$. The two first curve are locally orthogonal at $z=-1$ and 
\begin{eqnarray*}
\Sigma _{-1,\theta }^{r} &=&\Sigma _{-1,\theta }^{\prime }\cap 
\mathbb{C}_{+}=\left\{ z\in \Sigma _{-1,\theta };\,\,\Im z>0,\Re z\geq
-1\right\} \\
\Sigma _{-1,\theta }^{l} &=&\Sigma _{-1,\theta }^{\prime \prime }\cap
\mathbb{C}_{+}=\left\{ z\in \Sigma _{-1,\theta };\,\,\Im z>0,\Re z\leq
-1\right\} .
\end{eqnarray*}
The third curve $\Sigma _{-1,\theta }^{\blacktriangle }$ starting from some point $s_{-1,\theta }^{\blacktriangle }\in \left[ 1,+\infty \right[ $. The five rays defining $\Sigma _{-1,\theta }$ diverge to $\infty$ in different directions (see Figure  \ref{theta=pi/4 1,-1}).

\item If $\theta =0$, $\Sigma _{-1,0}$ is formed only by $\Sigma _{-1,0}^{'}$
and $\Sigma _{-1,0}^{''}$ while $\Sigma _{1,0}^{\blacktriangle }$ is formed by two smooth curves starting from $s_{1,0}^{\blacktriangle }=-1$ and symmetric with real axis (see Figure \ref{theta zeros}). 
\begin{figure}[h]
    \centering
    \includegraphics[width=0.4\linewidth]{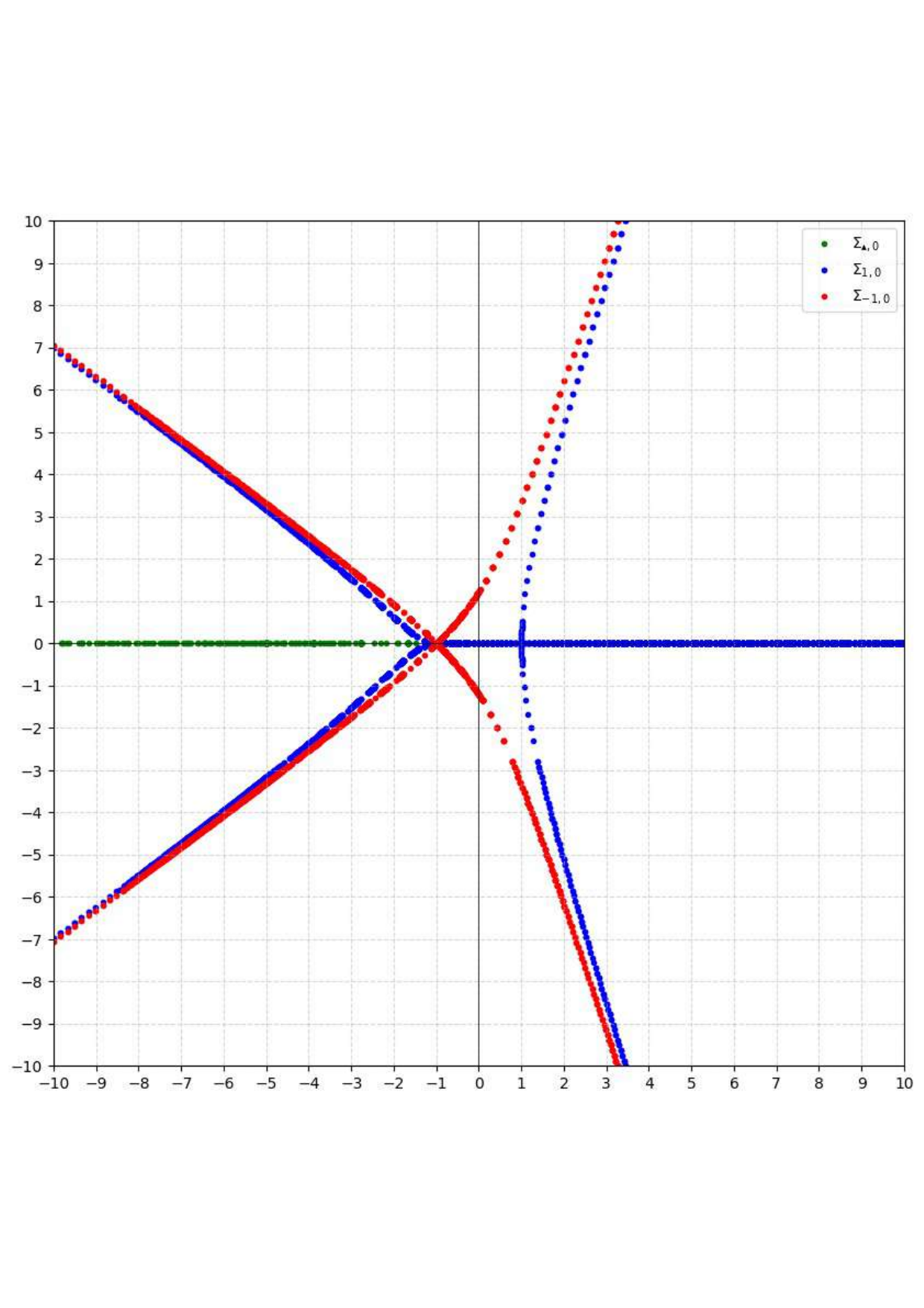}
    \caption{Excat plot of Sigma curves for $\theta=0$}
    \label{theta zeros}
\end{figure}

\item 
\begin{equation*}
\begin{tabular}{|l|l|l|l|l|}
\hline
& $\Sigma _{1,\theta }^{l}\cap \Sigma _{-1,\theta }^{r}$ & $\Sigma
_{1,\theta }^{l}\cap \Sigma _{-1,\theta }^{l}$ & $\Sigma _{1,\theta
}^{r}\cap \Sigma _{-1,\theta }^{r}$ & $\Sigma _{1,\theta }^{r}\cap \Sigma
_{-1,\theta }^{l}$ \\ \hline
$\theta \in \left] 0,\pi /8\right[ $ & $\left\{ t_{\theta }\right\} $ & $
\left\{ e_{\theta }\right\} $ & $\emptyset $ & $\emptyset $ \\ \hline
$\theta \in \left[ \pi /8,\pi /4\right] $ & $\left\{ t_{\theta }\right\} $ & 
$\emptyset $ & $\emptyset $ & $\emptyset $ \\ \hline
\end{tabular}
\ \ \ \ \ \ \ \ \ \ \ \ 
\end{equation*}
where $t_{\theta }$ and $e_{\theta }$ are two complex number varying respectively in two smooth curves.

\item The set $\Sigma _{\blacktriangle ,\theta }$ is a smooth curve included in the part of the upper half plane of the strip bounded by the segment $\left[ -1,1\right] $ and the lines $y=-\tan \left( 2\theta \right) \left(x\pm 1\right) $. It goes through $t_{\theta }$ and $e_{\theta }$ (if they exist); its two rays diverge  to $\infty $ in different directions, following the direction $\arg a=\pi -2\theta $, and it connects to the unique point $s_{\blacktriangle
,\theta }\in \left[ -1,0\right] $, such that 
\begin{equation*}
\Re \int_{-1}^{1}e^{i\theta }\left( \sqrt{p_{s_{\blacktriangle ,\theta
}}\left( t\right) }\right) _{+}dt=0.
\end{equation*}
In particular 
\begin{equation*}
\dsp\Sigma _{1,\theta }\cap \Sigma _{-1,\theta }\cap \left\{ a\in 
\mathbb{C}:\Im  a<0\right\} =\emptyset .
\end{equation*}
\begin{figure}[th]
    \centering
    \includegraphics[width=0.4\linewidth]{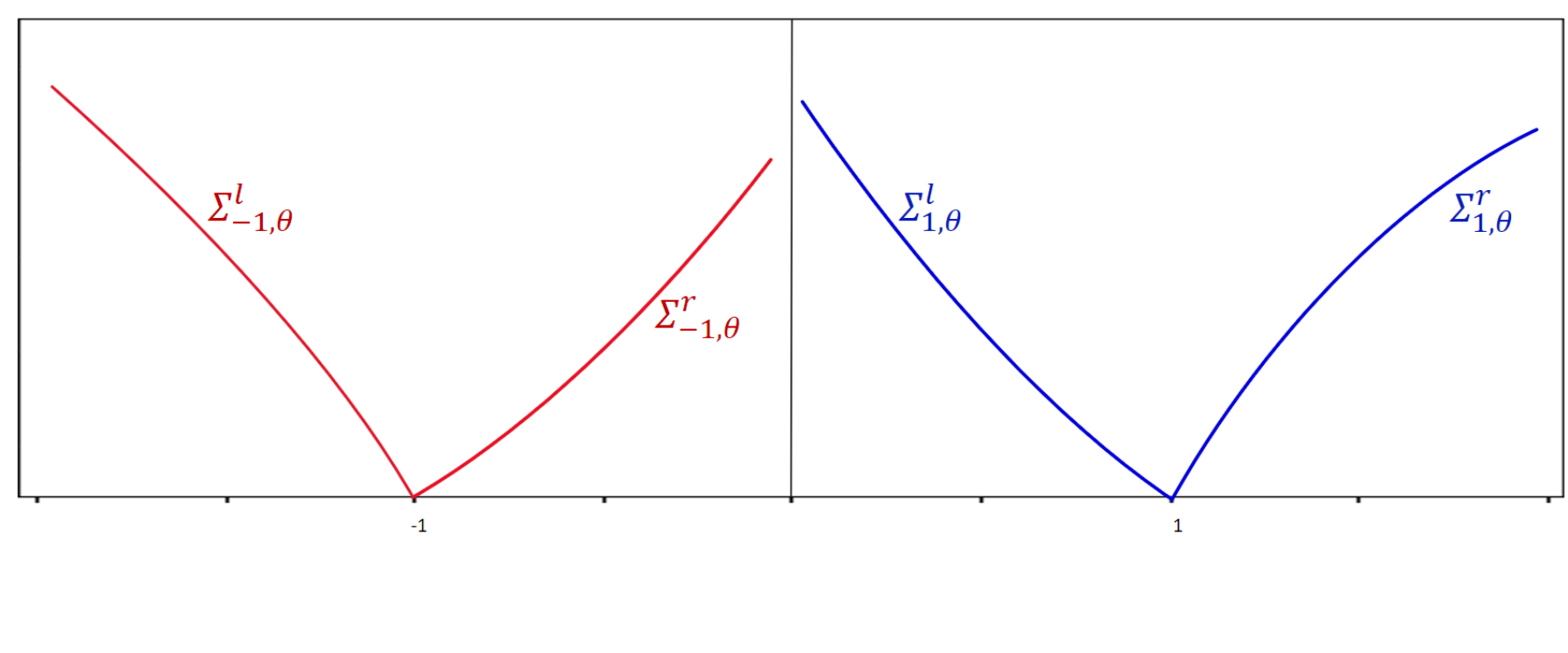}
    \caption{the orthogonal curves at $\pm1$; $\Sigma'_{\pm1,\theta}$}
    \label{fig orthogonal curves}
\end{figure}
\begin{figure}[th] 
  \begin{minipage}[b]{0.5\linewidth}\centering\includegraphics[width=0.5\textwidth]{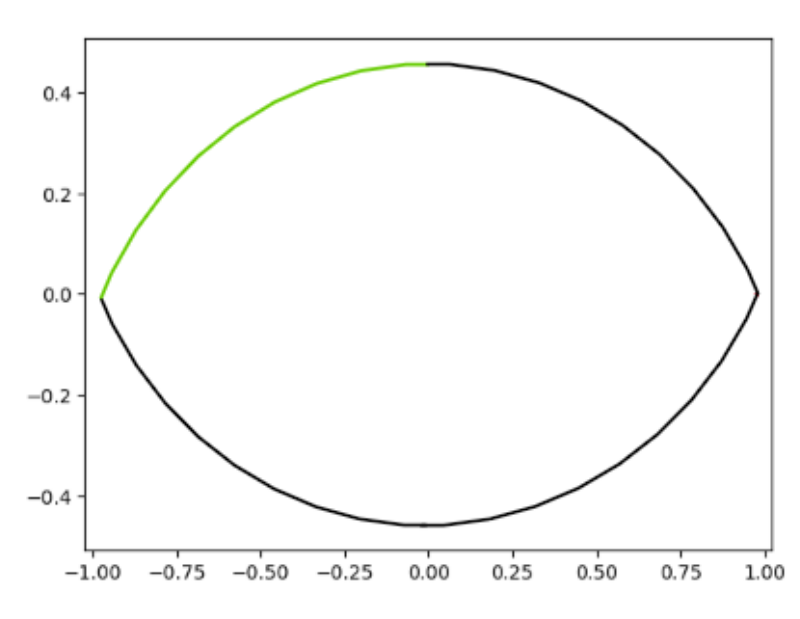}
\caption{Sets of $e_\theta$}
  \end{minipage}\hfill   \begin{minipage}[b]{0.5\linewidth}
  \centering
  \includegraphics[width=0.4\textwidth]{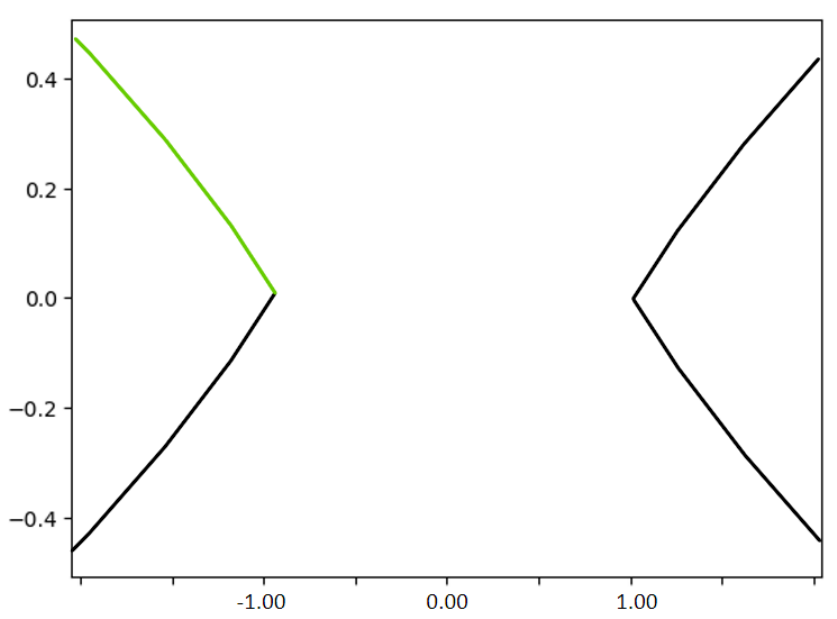}
\caption{Sets of $t_\theta$}
  \end{minipage}
\end{figure} 

\end{itemize}

\subsubsection{$\mathcal{D\diagup SG}$ correspondence}

Let 
\begin{equation}\label{chi theta}
\mathcal{\chi }_{\theta }=\left\{ a\in \overset{\bullet }{
\mathbb{C}};\,\,-\exp (2i\theta )\left( z-a\right) \left( z^{2}-1\right) dz^{2}
\text{ admit at least a short trajectories}\right\} ;
\end{equation}
by \eqref{symmetric relations} the set of data $\mathcal{D=}\left\{ \left(
\theta ,a\right) \in \lbrack 0,\frac{\pi }{2}[\times \overset{\bullet }{
\mathbb{C}}\,\,\right\} $. The study of the mutations of Stokes graphs as extra parameters vary in $\mathcal{D}$ is a highly attractive subject in
mathematical physics, see \cite{{Thabet+al}, {chouikhiquartic},
{iwaki}, {solynin}}.\\
Let $\theta \in \lbrack 0,\frac{\pi }{2}[,$

\begin{notation}
\label{Notations sigma}We denote by

\begin{itemize}
\item $\mathcal{S}_{1,\theta }:$ the sets of $\Sigma _{1,\theta }$ thet goes through $z=1$ minus the arc starting at $e_{\theta }$ (if it exists),
and diverging to $\infty $ in the upper half plane.
\item $\mathcal{S}_{-1,\theta }:$ the sets of $\Sigma _{-1,\theta }$ that goes through $-1$.
\item $\mathcal{S}_{\blacktriangle ,\theta }:$ the part of $\Sigma
_{\blacktriangle ,\theta }$ starts at $t_{\theta }$ and diverges to $
\infty$.

\item It was shown in \cite{Thabet+al} that $\mathcal{\chi }_{\theta }=
\mathcal{S}_{\pm 1,\theta }\cup \mathcal{S}_{\blacktriangle ,\theta }$.

\item $n_{\theta }:$ the finite number ($n_{0}=8,n_{arct\left( 0.5\right)
/2}=10,$and $n_{\pi /4}=9$) of the connected components $\Omega
_{1},...,\Omega _{n_{\theta }}$ of $\mathbb{C}\setminus \mathcal{\chi }_{\theta }$. 
\begin{figure}[ht]
    \centering
    \includegraphics[width=0.35\linewidth]{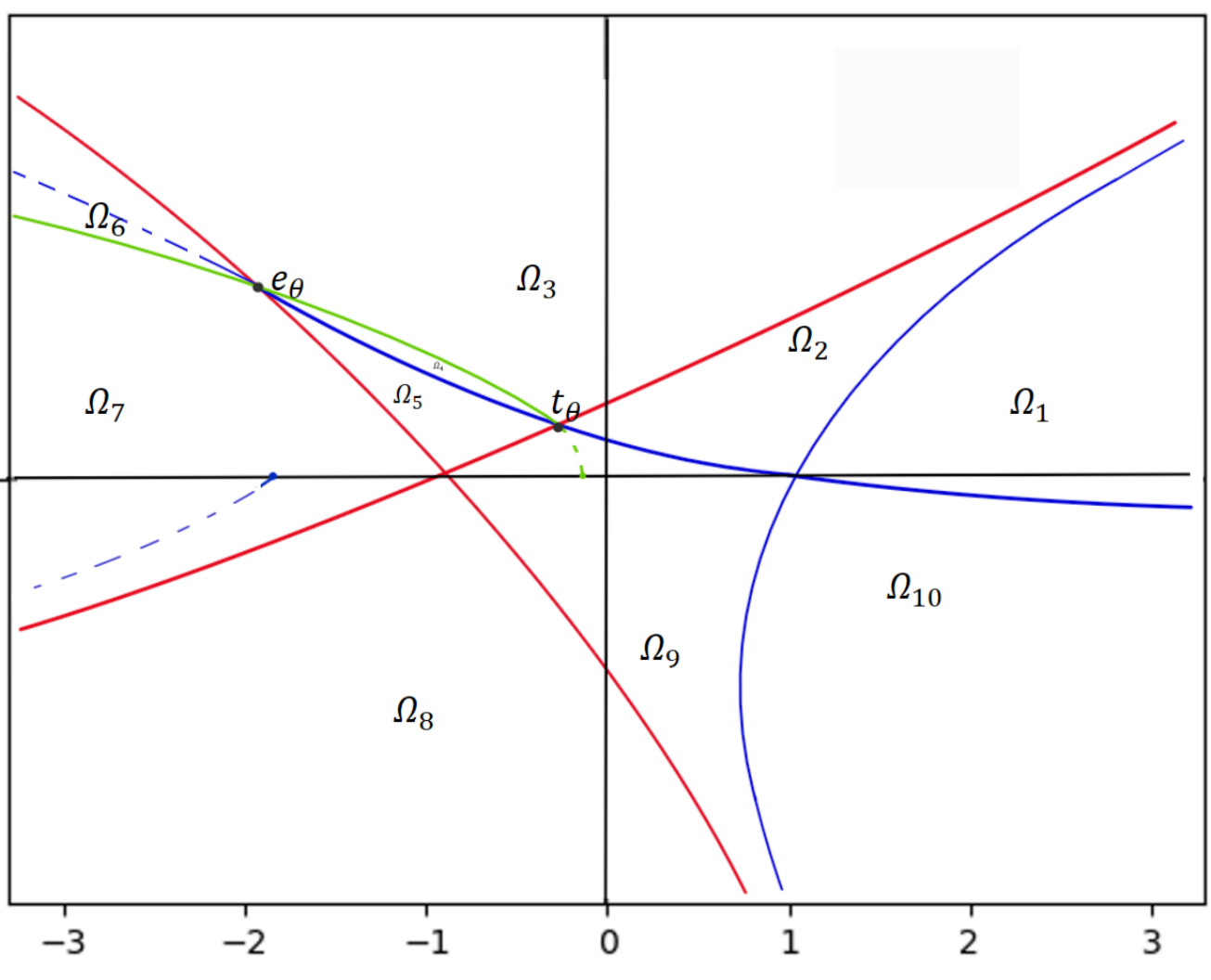}
    \caption{ approximate plot of $\mathcal{\chi }_{\theta }
\text{ for }\theta =\frac{arctan(0.5)}{2}$}
    \label{fig:enter-label}
\end{figure}
\end{itemize}
\end{notation}

\begin{definition}
We say that two Stokes graphs $\Gamma _{a,\theta }$ and $\Gamma _{b,\theta }$
have the same structure in $
\mathbb{C}
\setminus \mathcal{\chi }_{\theta }$ if for any critical trajectory of $
\varpi _{a,\theta }$ emerging from $z=-1$, (resp. $z=+1$, $z=a$), there exists a correspondent critical trajectory of $\varpi _{b,\theta }$ that
emerges from $z=-1$ (resp. $z=+1$, $z=b$), and diverges to $\infty $
following the same critical direction.
\end{definition}
We introduce the equivalence relation (see \cite[Proof of Theorem 9]
{Thabet+al}) in $\mathbb{C}\setminus \mathcal{\chi }_{\theta }$ 
\begin{equation*}
a\mathcal{R}^{\prime }b\text{ if and only if, critical graphs }\Gamma
_{a,\theta }\text{ and }\Gamma _{b,\theta }\text{ have the same structure}
\end{equation*}
The main result that we need from \cite{Thabet+al} is the following
theorem, which give a classification of Stokes graph in $\dsp\mathbb{C}\setminus \mathcal{\chi }_{\theta }$:

\begin{theorem}[{\protect\cite[Theorem 9]{Thabet+al}}]
\label{main theorem Thabet}In each of the domains $\Omega _{1},...,\Omega
_{n_{\theta }}$, the critical graph $\Gamma _{a,\theta }$ of the quadratic
differential $\varpi _{a,\theta }$ has the same structure; it splits the
Riemann sphere into five half-plane and two strip domains. Moreover, $\Gamma_{a,\theta }$:

\begin{enumerate}
\item has a short trajectory connecting $z=\pm 1$ if and only if $a\in 
\mathcal{S}_{\blacktriangle ,\theta };$ it has a short trajectory connecting 
$z=\pm 1$ to $z=a$ if and only if $a\in \mathcal{S}_{\pm 1,\theta }$. In all these cases, $\Gamma _{a,\theta }$ splits the Riemann sphere into five
half-plane domains and exactly one strip domain.

\item It has a tree (juxtaposition of two short trajectories) with summit $t_{\theta };$ it has a tree with summit $z=-1$ or $z=+1$ respectively for $\theta \in $ $\left[ 0,\pi /8\right[ $ and $\theta \in $ $\left[ 3\pi /8,\pi/2\right[ $. In these cases, $\Gamma _{a,\theta }$ splits the Riemann sphere into five half-plane domains.
\end{enumerate}

In particular, any change of a critical graph structure should pass by a
critical graph with at least one short trajectory.
\end{theorem}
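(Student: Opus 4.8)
\emph{Proof strategy.} The plan is to combine the classical structure theory of meromorphic quadratic differentials on $\widehat{\mathbb{C}}$ (Jenkins; Strebel, Ch.~III) with the explicit description of the level sets obtained in \cite[Lemma~1 and Proposition~2]{Thabet+al}. First I would record the local data of $\varpi_{a,\theta}$: three simple zeros $-1,1,a$, from each of which exactly three trajectory rays emanate at successive angles $2\pi/3$, and one pole at $\infty$ of order $7=\deg p_{a}+4$; by the local normal form at a pole of order $\geq 3$, every trajectory entering a suitable neighbourhood $\mathcal V$ of $\infty$ stays there and tends to $\infty$ along one of the five orthogonal critical directions $\alpha_{j}^{\perp}$ of \eqref{critical directions}, so $\mathcal V\setminus\Gamma_{a,\theta}$ is split into five sectors. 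After absorbing the constant $-e^{2i\theta}$ by a rotation of the $z$-variable, $\varpi_{a,\theta}$ becomes a genuine polynomial quadratic differential with that single pole; for such a differential it is classical that there are no recurrent trajectories and no closed trajectories (a closed trajectory would bound, on the side not containing $\infty$, a disc free of poles, and a residue/period computation would force its $\varpi$-length to vanish). Hence $\widehat{\mathbb{C}}\setminus\Gamma_{a,\theta}$ consists only of half-plane and strip domains, and Jenkins' basic structure theorem applies.

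The counting step is next. By the structure theory of polynomial quadratic differentials of degree $d$, the complement has exactly $d+2$ half-plane domains and $N$ strip domains with $0\leq N\leq d-1$, and the number of short (finite critical) trajectories, counted as arcs, equals $d-1-N$. Here $d=3$, so there are always five half-plane domains and $N\in\{0,1,2\}$ with $N=2-s$, $s$ being the number of short arcs. Since the three zeros cannot support two disjoint short arcs (four distinct endpoints would be needed), two short arcs must share a finite critical point, i.e.\ form a tree in the sense of the statement. This already yields the trichotomy $N=2$ (no short trajectory) / $N=1$ (one short trajectory) / $N=0$ (tree), and in particular shows that every critical graph whose structure differs from the generic one contains at least one short trajectory --- which, once the next step locates the short trajectories, gives the concluding ``in particular''.

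It remains to determine which $a$ realizes each case and that the structure is constant on each $\Omega_{i}$. A short vertical trajectory joining $1$ to $a$ (resp.\ $-1$ to $a$, resp.\ $-1$ to $1$) can exist only if $\Re\int_{[\,\cdot\,,\,\cdot\,]}e^{i\theta}\sqrt{p_{a}}\,dz=0$ on the relevant segment, i.e.\ only if $a\in\Sigma_{1,\theta}$ (resp.\ $\Sigma_{-1,\theta}$, $\Sigma_{\blacktriangle,\theta}$). Conversely, fixing a vertex and following the trajectory ray that points toward the other vertex, one tracks this ray as $a$ runs along the branch of $\Sigma_{\bullet,\theta}$ in question and shows --- using the geometry of the $\Sigma$-curves and the location of $t_{\theta},e_{\theta},s_{\bullet,\theta}$ from \cite[Lemma~1, Proposition~2]{Thabet+al}, together with the symmetries \eqref{symmetric relations} to reduce to $\theta\in[0,\pi/4]$ --- that the ray actually lands on the target vertex precisely when $a$ lies on the sub-arc $\mathcal S_{\bullet,\theta}$ (this is what excises, e.g., the arc beyond $e_{\theta}$). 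The two short trajectories of a tree occur exactly where two such conditions hold at once: at $a=t_{\theta}\in\Sigma_{1,\theta}^{l}\cap\Sigma_{-1,\theta}^{r}$ the zero $z=a$ is the common vertex, while the trees with summit $z=-1$ (for $\theta\in[0,\pi/8[$) or $z=+1$ (for $\theta\in[3\pi/8,\pi/2[$, via \eqref{symmetric relations}) appear where the branches of $\Sigma_{1,\theta}$ and $\Sigma_{-1,\theta}$ meet at $s_{\blacktriangle,\theta}$. Finally, on $\overset{\bullet}{\mathbb{C}}\setminus\chi_{\theta}$ the critical graph depends continuously on $a$ (uniformly on compacta, the five directions at $\infty$ being fixed); with no short trajectory present no ray can jump to a neighbouring critical direction and no new connection between critical points can be created under a small perturbation of $a$, so the combinatorial type is locally constant, hence constant on each connected component $\Omega_{i}$.

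The step I expect to be the main obstacle is the converse in the third paragraph: upgrading the necessary period condition $a\in\Sigma_{\bullet,\theta}$ to the actual existence of the short trajectory and pinning down the exact sub-arc $\mathcal S_{\bullet,\theta}$. This requires controlling the \emph{global} trajectory emanating from a zero --- not merely its local direction and its period --- as $a$ varies, keeping track of which branch of $\Sigma_{\bullet,\theta}$ one is on and of how the $9$ Stokes rays redistribute themselves among the $5$ directions at $\infty$ at the moment a connection forms or breaks; the detailed picture of the $\Sigma$-curves in \cite{Thabet+al} is precisely what makes this analysis feasible.
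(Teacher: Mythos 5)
A preliminary remark on the comparison you asked for: this paper contains no proof of the statement at all --- it is imported verbatim as \cite[Theorem 9]{Thabet+al} and used as a black box --- so your outline can only be judged on its own terms, not against an in-paper argument. On those terms, your first two paragraphs are sound and follow the natural route: Jenkins' structure theorem applied to the polynomial quadratic differential (no recurrent or closed trajectories, hence only half-plane and strip domains), five half-planes forced by the order-$7$ pole at $\infty$, and the relation $N=2-s$ between the number of strips and the number of short arcs, which a boundary-incidence count ($10+4N=2(9-2s)$) confirms.

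Two gaps remain, one local and one central. The local one: from ``two short arcs cannot be disjoint'' you conclude they ``form a tree,'' but two short trajectories sharing \emph{both} endpoints (a bigon between $-1$ and $1$, say) also satisfy your constraint and are not a tree; you need Teichm\"uller's lemma to exclude a trajectory bigon whose interior contains at most the one remaining simple zero and no pole --- the angle count $\sum_j\bigl(1-\tfrac{3\theta_j}{2\pi}\bigr)=2+\sum n_i$ fails for every admissible choice. The central one: everything that makes items (1)--(2) a classification is the \emph{sufficiency} direction --- that for $a$ on the precise sub-arcs $\mathcal S_{\pm1,\theta}$, $\mathcal S_{\blacktriangle,\theta}$ (and only there, e.g.\ not on the arc of $\Sigma_{1,\theta}$ beyond $e_{\theta}$) the short trajectory actually exists, and that the tree configurations occur exactly at the distinguished points. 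You correctly flag this as the main obstacle, but you supply no argument for it and instead defer to ``the detailed picture of the $\Sigma$-curves in \cite{Thabet+al},'' which is circular when the theorem being proved is Theorem 9 of that very reference (the level-set descriptions in its Lemma 1 and Proposition 2 are fair to use; the landing behaviour of the critical rays is not, since that is the content of the theorem). The standard way to close this is an open-and-closed argument on each arc of $\Sigma_{\bullet,\theta}$ between the distinguished points, anchored at one explicitly computed sample value of $a$ per arc; without some such argument the statement is asserted rather than proved. A smaller factual slip in the same paragraph: the trees with summit $z=\mp1$ occur at $a=e_{\theta}$ (respectively its image under the symmetries \eqref{symmetric relations}), i.e.\ at the point of $\Sigma_{1,\theta}^{l}\cap\Sigma_{-1,\theta}^{l}$ listed in the table for $\theta\in\left]0,\pi/8\right[$, not at $s_{\blacktriangle,\theta}\in[-1,0]$, which is where $\Sigma_{\blacktriangle,\theta}$ meets the real axis.
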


As a result of this theorem, we obtain a full classification of the Stokes graphs for $\theta \in \lbrack 0,\frac{\pi }{2}[$ :

\begin{itemize}
\item Stokes graph of \textbf{type A}: it has no short trajectory and two
strip domains $B_{0}$ and $B_{1}$ (the number of strip domains is maximal). It depends on the location of $a$ in one of the domains $\Omega _{i}$. In fact, we have $\dsp\left(\mathbb{C}\setminus \mathcal{\chi }_{\theta }\right) \diagup \mathcal{R}^{\prime }=
\underset{i=1}{\overset{n_{\theta }}{\bigcup }}\Omega _{i}$ (see figure \ref{typeA}).

\item Stokes graph of \textbf{type B}: It has one short trajectory $\ell $
and one strip domain $B_{0}$ such that $\ell \subset \partial B_{0}$ (see figure \ref{typeB}).

\item Stokes graph of \textbf{type BB}: It has one short trajectory $\ell $ and one strip domain $B_{0}$ such that $\ell \cap \partial B_{0}$ is reduced to a turning point (see figure \ref{typeBB}).

\item A \textbf{Tree }(or \textbf{Boutroux) }Stokes graph: It has a broken
short trajectory with summit at a turning point, and no strip domain occurs. This curve can appear only for $a\in $\textbf{\ }$\left\{ t_{\theta
},e_{\theta }\right\} $  (see figure \ref{typeT}).
\end{itemize}
\begin{figure}[tbh]
\begin{minipage}[b]{0.3\linewidth}
		\centering\includegraphics[scale=0.3]{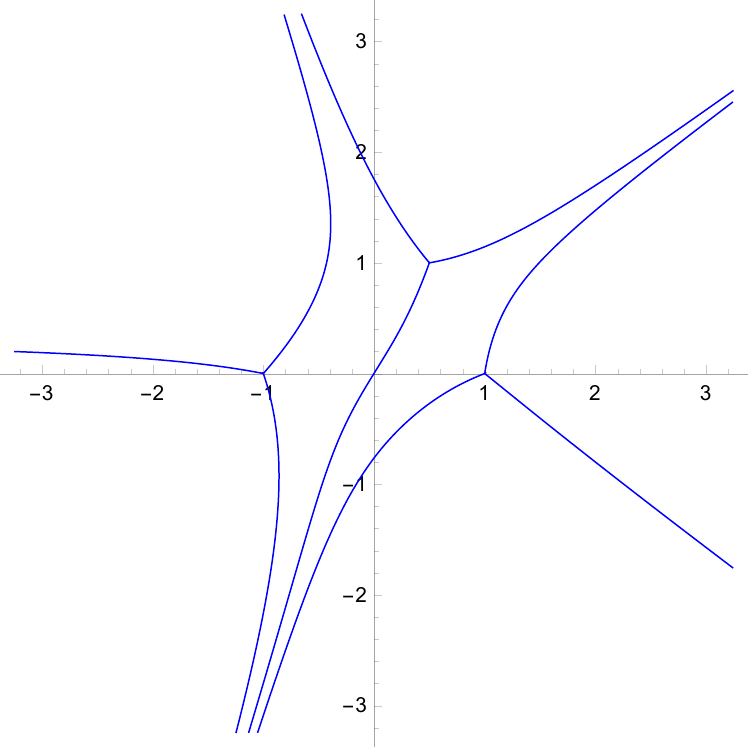}
		\caption{Stokes graph of \textbf{type A}; $(a\notin\chi_{\theta})$}
        \label{typeA}
\end{minipage}\hfill\begin{minipage}[b]{0.3\linewidth}
	\centering\includegraphics[scale=0.30]{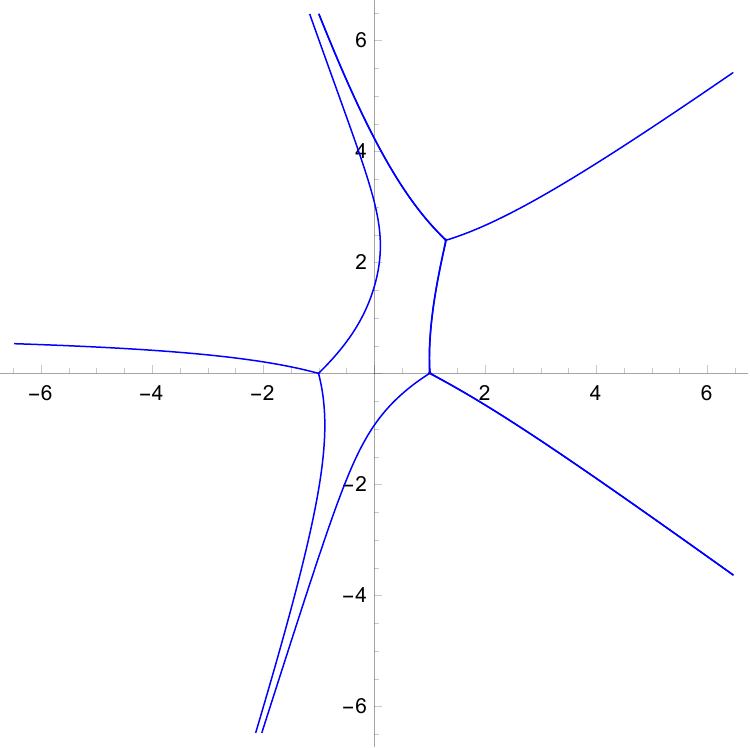}
	\caption{Stokes graph of \textbf{type B}; $(a\in\mathcal{S}_{1,\theta })$}
    \label{typeB}
\end{minipage}\hfill\begin{minipage}[b]{0.3\linewidth}
			\centering\includegraphics[scale=0.30]{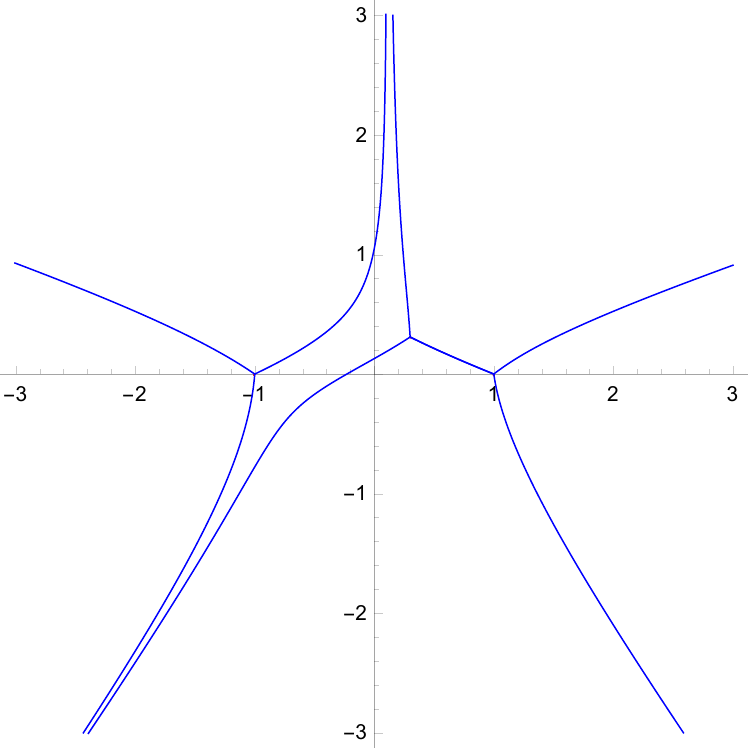}
			\caption{Stokes graph of \textbf{type BB}; $(a\in\mathcal{S}_{1,\theta })$}
            \label{typeBB}
\end{minipage}\hfill
\end{figure}\label{Figures7,8,9}
\begin{figure}[h]
	\centering\includegraphics[scale=0.30]{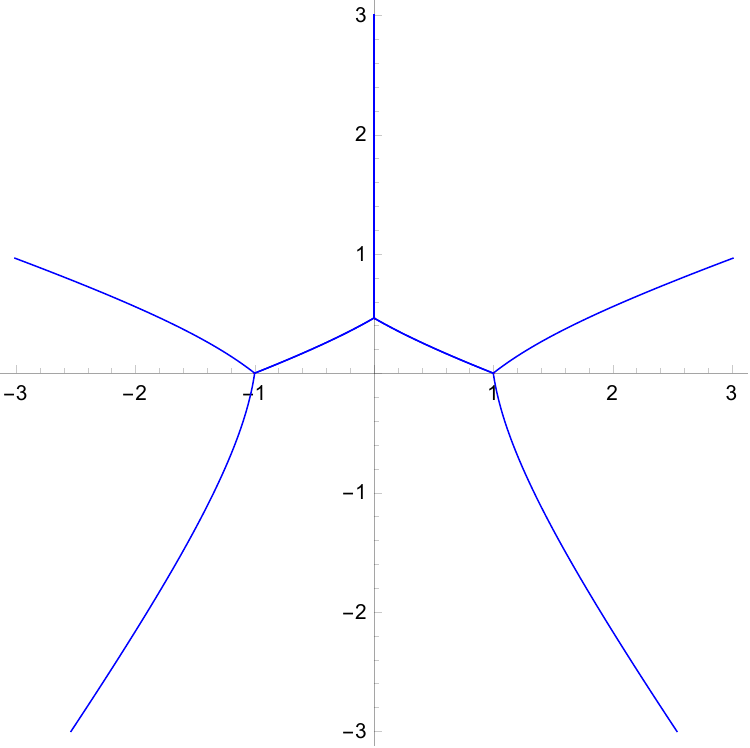}
	\caption{Stokes graph of type \textbf{Tree}; $(a=t_{\theta}, \theta\in[\frac{\pi}{8},\frac{3\pi}{8}[)$}
    \label{typeT}
\end{figure}\label{Figure 10}
\begin{corollary}
As $a$ varies in $\mathcal{\chi }_{\theta }$ any change in the  critical graph should pass as $a$ crosses one of the points $\left\{ -1,1,t_{\theta
},e_{\theta }\right\} $.
\end{corollary}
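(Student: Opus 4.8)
The plan is to show that $\chi_{\theta}\setminus\{-1,1,t_{\theta},e_{\theta}\}$ is a finite disjoint union of open arcs, and that $\Gamma_{a,\theta}$ keeps a fixed combinatorial structure as $a$ ranges over any one of these arcs; the corollary follows at once, since to pass from one arc to another while staying inside $\chi_{\theta}$ the parameter $a$ must cross one of the four distinguished points.

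First I would assemble the description of $\chi_{\theta}$ recalled above: $\chi_{\theta}=\mathcal{S}_{1,\theta}\cup\mathcal{S}_{-1,\theta}\cup\mathcal{S}_{\blacktriangle,\theta}$, where $\mathcal{S}_{\pm1,\theta}$ consist of the finitely many smooth arcs of $\Sigma_{\pm1,\theta}$ passing through $z=\pm1$, and $\mathcal{S}_{\blacktriangle,\theta}$ is the branch of $\Sigma_{\blacktriangle,\theta}$ issuing from $t_{\theta}$ and diverging to $\infty$. From the analysis of the level sets in Section \ref{level sets}, together with the incidence table there, the only points at which two of these arcs meet, or at which one of them branches, are $z=1$ (the orthogonal crossing of $\Sigma_{1,\theta}^{\prime}$ and $\Sigma_{1,\theta}^{\prime\prime}$), $z=-1$ (likewise for $\Sigma_{-1,\theta}$), $t_{\theta}=\Sigma_{1,\theta}^{l}\cap\Sigma_{-1,\theta}^{r}$ (also the origin of $\mathcal{S}_{\blacktriangle,\theta}$), and $e_{\theta}=\Sigma_{1,\theta}^{l}\cap\Sigma_{-1,\theta}^{l}$ when $\theta\in\;]0,\pi/8[$. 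The other distinguished real points $s_{\pm1,\theta}^{\blacktriangle}$, $s_{\blacktriangle,\theta}$ are plain endpoints of arcs, not crossing points of $\chi_{\theta}$ — or, when $\theta=0$, they reduce to $z=\pm1$ — so $a$ cannot travel across them inside $\chi_{\theta}$; this is read off directly from the description above and from the reflection symmetries \eqref{symmetric relations}. Hence $\chi_{\theta}\setminus\{-1,1,t_{\theta},e_{\theta}\}$ is a finite disjoint union of open arcs $\gamma_{1},\dots,\gamma_{N}$.

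Next I would fix one such arc $\gamma$, say $\gamma\subset\mathcal{S}_{1,\theta}$ (the cases $\gamma\subset\mathcal{S}_{-1,\theta}$ and $\gamma\subset\mathcal{S}_{\blacktriangle,\theta}$ being identical). By Theorem \ref{main theorem Thabet}, for every $a\in\gamma$ the graph $\Gamma_{a,\theta}$ is of type \textbf{B} or \textbf{BB}: it carries exactly one short trajectory, joining $z=1$ to $z=a$, and exactly one strip domain $B_{0}(a)$, and no tree occurs since $t_{\theta},e_{\theta}\notin\gamma$. What remains is to see that the full combinatorial type of $\Gamma_{a,\theta}$ — the anticlockwise arrangement of the five half-plane domains and the strip, and in particular whether the short trajectory lies on $\partial B_{0}(a)$ (type \textbf{B}) or meets $\overline{B_{0}(a)}$ only at a turning point (type \textbf{BB}) — is locally constant on $\gamma$. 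For $a$ near a fixed $a_{0}\in\gamma$ the three finite critical points $1,-1,a$ stay distinct and simple and the unique infinite critical point stays at $\infty$ with the fixed anti-Stokes directions \eqref{critical directions}; hence the critical trajectories of $\varpi_{a,\theta}$ vary continuously with $a$, and by the structural stability of the critical graph of a quadratic differential on $\widehat{\mathbb{C}}$ with a single higher-order pole (see \cite{strebel, jenkins}) the graph $\Gamma_{a,\theta}$ is carried onto $\Gamma_{a_{0},\theta}$ by an ambient isotopy. Thus the combinatorial type can change only if an unbounded critical trajectory reconnects to a turning point — i.e. a second short trajectory is created — or if two finite critical points collide. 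The first event would place $a$ on a component of $\chi_{\theta}$ distinct from $\gamma$, which by Theorem \ref{main theorem Thabet} and the incidence table can only occur at $t_{\theta}$ or $e_{\theta}$; the second forces $a\to\pm1$; both are excluded on $\gamma$. Hence the type is locally constant, and therefore constant since $\gamma$ is connected.

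Putting these together: if $a,b\in\chi_{\theta}$ and $\Gamma_{a,\theta}$, $\Gamma_{b,\theta}$ have different structures, then $a$ and $b$ lie in different arcs $\gamma_{i}\neq\gamma_{j}$, so every path from $a$ to $b$ inside $\chi_{\theta}$ meets $\{-1,1,t_{\theta},e_{\theta}\}$, which is the assertion. The hard part will be the local-constancy step of the third paragraph: one must invoke the structural stability of the Stokes graph under perturbation of $a$ away from collisions, and rule out — via Theorem \ref{main theorem Thabet} and the geometry of $\chi_{\theta}$ — the appearance of a second short trajectory joining a different pair of turning points; the bookkeeping that tracks the type \textbf{B} versus type \textbf{BB} distinction along a fixed arc is the only genuinely delicate point.
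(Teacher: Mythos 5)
Your overall architecture is the same as the paper's: cut $\mathcal{\chi}_{\theta}\setminus\{-1,1,t_{\theta},e_{\theta}\}$ into open arcs and show the Stokes graph has constant structure on each arc, so that a change forces $a$ to cross one of the four distinguished points. Your first paragraph (the decomposition, using the incidence table and the fact that $s_{\pm1,\theta}^{\blacktriangle}$, $s_{\blacktriangle,\theta}$ are mere endpoints) is fine and matches what the paper implicitly uses.

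The gap is in your third paragraph, and it is exactly the step you flag as delicate. You justify constancy along an arc by invoking ``structural stability of the critical graph of a quadratic differential'' and continuity of the critical trajectories in $a$, citing Strebel and Jenkins. No such general stability theorem exists, and the statement is false as a blanket claim: the critical graph of $\varpi_{a,\theta}$ is precisely \emph{not} structurally stable in $a$ --- infinite critical trajectories jump between critical directions as $a$ crosses $\mathcal{\chi}_{\theta}$, which is the whole content of the $\mathcal{D}\diagup\mathcal{SG}$ correspondence and the reason this corollary is not vacuous. Your subsequent enumeration (``the type can change only if a second short trajectory appears or two turning points collide'') is the statement that needs proof, not a consequence of anything you have established; in particular it is what rules out a \textbf{B}$\leftrightarrow$\textbf{BB} transition in the interior of an arc, and that transition involves only a rearrangement of \emph{infinite} trajectories bounding the strip, so it is not obviously covered by ``a second short trajectory is created.'' The paper closes this gap differently and more cheaply: each arc, e.g.\ $\Sigma_{1,\theta}^{r}$, lies in $\partial\Omega_{i}\cap\partial\Omega_{j}$ for a fixed pair of adjacent components of $\mathbb{C}\setminus\mathcal{\chi}_{\theta}$, and Theorem \ref{main theorem Thabet} already gives constancy of the structure on each of $\Omega_{i}$ and $\Omega_{j}$; this pins down $\partial B_{0}$ (only the length of $\ell$ varies) and hence the type (\textbf{B} versus \textbf{BB}) along the whole arc, and the pair $(\Omega_{i},\Omega_{j})$ can only change where arcs meet, i.e.\ at $\{-1,1,t_{\theta},e_{\theta}\}$. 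If you replace your stability appeal by this ``sandwiching between the two adjacent $\Omega$'s'' argument, your proof becomes the paper's.
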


\begin{proof}
Suppose for simplicity that $a$ $\in \Sigma _{1,\theta }^{r}\subset \Sigma
_{1,\theta }^{\prime }\subset \mathcal{\chi }_{\theta }$, so there exists a short trajectory $\ell $ connecting $a$ to $1$ and one strip domain $B_{0}$. It is clear that $\Sigma _{1,\theta }^{r}$ $\in \partial \Omega _{i}\cap\partial \Omega _{j}$ for some $i\neq j$. Since the structure of the critical graph is unchangeable in $\Omega _{i}$ and $\Omega _{j}$ by Theorem  \ref{main theorem Thabet}, so $\partial B_{0}$ is invariant (just the length of $\ell $ change) and the Stokes graph is of \textbf{type BB}. As $a$ crosses $1$ staying in $\Sigma _{1,\theta }^{\prime }$, at least one of the domains, say $\Omega _{j}$, changes to a third domain $\Omega _{k}$ where $k\neq i,j$. The strip domain $B_{0}$ is now delimited by the short trajectory $\ell $ and so the Stokes graph changes the structure to \textbf{type B}.

\end{proof}

We can now extend the relation $\mathcal{R}^{\prime }$ to an equivalence
relation $\mathcal{R}$ defined over $\overset{\bullet }{
\mathbb{C}}$, and we obtain $\dsp\overset{\bullet }{\mathbb{C}}\diagup \mathcal{R=}\underset{i=1}{\overset{n_{\theta }}{\bigcup }}\Omega
_{i}\bigcup \left( \mathcal{\chi }_{\theta }\diagdown \left\{ -1,1\right\}
\right) $. Let 
\begin{align*}
\left[ \Gamma _{a,\theta }\right] & =\left\{ \Gamma _{b,\theta };\,\,a
\mathcal{R}b\,\,\right\} ; \\
\mathcal{SG}& \mathcal{=}\left\{ \left[ \Gamma _{a,\theta }\right] ;\,\,
\theta \in \lbrack 0,\frac{\pi }{2}[\right\} ;
\end{align*}
$\mathcal{SG}$ is called the \emph{Stokes Geometry} set. The $\mathcal{
D\diagup SG}$ correspondence is expressed via the map
\begin{equation*}
\begin{array}{cc}
\mathcal{J}:\mathcal{D\longrightarrow SG} & \,\,\left( \theta ,a\right)
\longmapsto \left[ \Gamma _{a,\theta }\right]
\end{array}
\end{equation*}

\begin{proposition}[\protect\cite{Thabet+al}]
The map $\mathcal{J}$ is surjective.
\end{proposition}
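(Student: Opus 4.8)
The plan is to treat this as a bookkeeping statement sitting on top of Theorem~\ref{main theorem Thabet}, so I would first make explicit what the codomain really is. By construction $\mathcal{SG}=\{[\Gamma_{a,\theta}]:\theta\in[0,\tfrac{\pi}{2}[,\ a\in\overset{\bullet }{\mathbb{C}}\}$, which is literally the image of $\mathcal{J}$ on $\mathcal{D}=[0,\tfrac{\pi}{2}[\times\overset{\bullet }{\mathbb{C}}$; at the purely formal level surjectivity is therefore immediate, and the substance of the statement is that this codomain has been \emph{completely described}, i.e. that the four combinatorial types (A, B, BB, Tree) exhaust $\mathcal{SG}$ and that each of them is actually attained. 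Accordingly I would organize the argument as follows: (i) fix $\theta\in[0,\tfrac{\pi}{2}[$ and, using the symmetry relations~\eqref{symmetric relations}, reduce to $\theta\in[0,\tfrac{\pi}{4}]$; (ii) invoke Theorem~\ref{main theorem Thabet} to conclude that for this fixed $\theta$ the class $[\Gamma_{a,\theta}]$ depends only on which piece of the partition $\overset{\bullet }{\mathbb{C}}/\mathcal{R}=\bigcup_{i}\Omega_i\cup(\chi_\theta\setminus\{-1,1\})$ contains $a$; (iii) check that every such piece is nonempty and realizes a genuine class.

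For step (iii): the type A classes are indexed by the connected components $\Omega_1,\dots,\Omega_{n_\theta}$ of $\mathbb{C}\setminus\chi_\theta$, whose number is finite and positive ($n_0=8$, $n_{\arctan(0.5)/2}=10$, $n_{\pi/4}=9$) by Notation~\ref{Notations sigma}, and each of which is open and nonempty by the description of $\Sigma_\theta$ recalled in Section~\ref{level sets} — the curves $\Sigma_{\pm1,\theta}$ and $\Sigma_{\blacktriangle,\theta}$ are finitely many smooth arcs diverging to $\infty$ along pairwise distinct directions, hence they cut $\mathbb{C}$ into finitely many nonempty simply connected regions; choosing any $a$ in such an $\Omega_i$ realizes the corresponding class. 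The types B and BB are realized by choosing $a$ on the appropriate arc of $\mathcal{S}_{\pm1,\theta}\cup\mathcal{S}_{\blacktriangle,\theta}$ (nonempty by the same description), the alternative B versus BB being governed by whether $a$ lies on the boundary of one or of two of the $\Omega_i$, exactly as in the Corollary preceding this Proposition. Finally the Tree class is realized for $a\in\{t_\theta,e_\theta\}$, which exist for the ranges of $\theta$ named in Theorem~\ref{main theorem Thabet}(2), with the ``summit at $\pm1$'' tree covering the complementary range. Letting $\theta$ run over all of $[0,\tfrac{\pi}{2}[$ then sweeps out every element of $\mathcal{SG}$.

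The only genuine content — hence the only possible obstacle — lies in the nonemptiness and exhaustiveness claims of step (iii), and both are already available: exhaustiveness is precisely the concluding assertion of Theorem~\ref{main theorem Thabet} (``any change of a critical graph structure should pass by a critical graph with at least one short trajectory''), while nonemptiness of the $\Omega_i$ and of the curves and points $\mathcal{S}_{\pm1,\theta}$, $\mathcal{S}_{\blacktriangle,\theta}$, $t_\theta$, $e_\theta$ is part of the description of $\Sigma_\theta$ imported from \cite[Lemma~1, Proposition~2]{Thabet+al}. So in practice the proof reduces to quoting these two facts and observing that the defining set of $\mathcal{SG}$ is nothing other than $\mathcal{J}(\mathcal{D})$; no new estimate is needed, and the surjectivity of $\mathcal{J}$ follows.
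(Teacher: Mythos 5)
Your reading of the statement is the right one, and your argument is sound. Note first that the paper itself offers no proof here: the proposition is imported verbatim from \cite{Thabet+al}, so there is no internal argument to compare yours against. Your central observation — that $\mathcal{SG}$ is \emph{defined} as $\left\{ \left[\Gamma_{a,\theta}\right] : \theta\in[0,\tfrac{\pi}{2}[ \right\}$, i.e. as $\mathcal{J}(\mathcal{D})$, so that surjectivity is formally tautological and the only substantive content is that the codomain has been exhaustively described and that each described class is actually attained — is exactly what makes the statement a bookkeeping proposition rather than a theorem. Your step (iii) supplies the two facts that carry that content: exhaustiveness of the types A, B, BB, Tree comes from Theorem~\ref{main theorem Thabet}, and nonemptiness of the realizing loci (the components $\Omega_i$, the arcs $\mathcal{S}_{\pm1,\theta}$ and $\mathcal{S}_{\blacktriangle,\theta}$, and the points $t_\theta$, $e_\theta$ in the stated ranges of $\theta$) comes from the description of $\Sigma_\theta$ in \cite[Lemma 1, Proposition 2]{Thabet+al}, both of which are legitimately quotable. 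One small remark: for surjectivity you do not need the finer claim that distinct $\Omega_i$ yield distinct classes, only that each class arising from some $(\theta,a)$ is hit — which is automatic — so even that part of your step (iii) is more than is strictly required. No gap.
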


\subsection{Short review of complex WKB\label{sub2}}

In this section, we give a short review of some ingredients and notations in
complex WKB, for more details, see \cite[\S 3 ]{fed1}.

\subsubsection{Canonical domains and asymptotic expansions\label{topology Stokes}}

Let $\Upsilon =\left\{ -1,1,a\right\}$, the set of zeros of $p_{a}$ be called
the turning points associated with the ODE \eqref{first ODE}, and $\lambda =r\exp(i\theta )\in\mathbb{C}^{\ast }$ with $\theta \in \lbrack 0,\frac{\pi }{2}[$. The multi-valued
function 
\begin{equation*}
h(\lambda ,z_{0},z)=\lambda \int_{z_{0}}^{z}\sqrt{(t-a)(t^{2}-1)}dt,z_{0}\in
\Upsilon 
\end{equation*}
has holomorphic branches in each Stokes region. A domain $D$ in the complex
$z$-plane is called canonical if there is a holomorphic branch of $h(\lambda,z_{0},z)$ in $D$ that maps $D$ to the whole complex plane with a finite number of vertical cuts. The domain $D$ is simply connected and contains no turning points and $\partial D$ consists of Stokes lines (the preimages of the sides of the cuts). A canonical domain is the union of two domains of half-plane type and up to two strip-domains.\\
\newline
Let $\varepsilon >0$. We denote $D_{\varepsilon }$ for the preimage of $h(D)$
with $\varepsilon $-neighborhoods of the cuts and $\varepsilon $
-neighborhoods of the turning points removed. A canonical path in $
D_{\varepsilon }$ is a path such that $\Re(h(\lambda ,z_{0},z))$ is
monotone along the path. For example, the anti-Stokes
lines are canonical paths. For every point $z$ in $D_{\varepsilon }$, there are always canonical paths $\gamma _{\varepsilon }^{+}(z)$ and $\gamma_{\varepsilon }^{-}(z)$ from $z$ to $\infty $, such that $\Re
(h(\lambda ,z_{0},z))\rightarrow +\infty $ and $\Re(h(\lambda
,z_{0},z))\rightarrow -\infty $, respectively.\\
\newline
With $D_{\varepsilon }$, $\gamma _{\varepsilon }^{-}$ and $\gamma
_{\varepsilon }^{+}$ as above, there exist $r_{\varepsilon }>0$ such that for $r=\left\vert \lambda \right\vert >r_{\varepsilon }$, up to a constant
multiple, equation (\ref{first ODE}) has unique solutions $y_{1}(z,\lambda
)$ and $y_{2}(z,\lambda ):$ 
\begin{equation}
\begin{cases}
y_{1}(z,\lambda )=(p_a(z))^{-\frac{1}{4}}\exp (-h(\lambda,z_{0},z))[1+
 \phi_{1}(z,\lambda )]
\\{y_{2}(z,\lambda )=(p_a(z))^{-\frac{1}{
4}}\exp (h(\lambda,z_{0},z))[1+ \phi_{2}(z,\lambda )]}
    
\end{cases}\label{uniform asymp}
\end{equation}
for all $z\in D_{\varepsilon }$, where $\left\vert \phi _{l}(z,\lambda
)\right\vert \leq c_{l}\frac{r_{\varepsilon }}{r-r_{\varepsilon }}$ for $
l\in \left\{ 1,2\right\} $. Consequently, \\$y_{1,2}(z,\lambda )\sim
(p_{a}(z))^{-\frac{1}{4}}\exp (\pm h(\lambda ,z_{0},z))$, with dual
asymptotic behaviors. That is, it is true if:

\begin{itemize}
\item $\phi _{l}(z,\lambda )\rightarrow 0$, as $z\rightarrow \infty $, $z\in
\gamma _{\varepsilon }^{\pm }$ for $\lambda $ fixed such that $\left\vert
\lambda \right\vert >r_{\varepsilon }$.

\item $\phi _{l}(z,\lambda )=O(\frac{1}{\lambda })$, as $\left\vert \lambda
\right\vert \rightarrow +\infty $, uniformly for $z\in D_{\varepsilon }$.
\end{itemize}
\begin{remark}
\begin{enumerate}
\item If $y_{1,2}(z,\lambda )$ decays (or blows-up) exponentially
along a canonical path $\gamma $ in $D_{\varepsilon }$, so it decays
(blows up) exponentially along any path homotopic to $\gamma $ in $
D_{\varepsilon }$. We call so this solution subdominant (dominant) in $
D_{\varepsilon }$.\label{boundary-cond-remark}

\item If $y_{1,2}(z,\lambda )$ is subdominant in $H_{k}$ so it will be
dominant in $H_{j}$ for all $j\neq k$.

\item We will give a proof of this classical result based on Liouville
transformation in Section \ref{Appendix}. The reader can also see \cite{fed1, fed2, birkhoff, Hille, olver, heading}. We will
improve the validity of these asymptotic behaviors (see again Section \ref{Appendix}
). In fact, there exist $\delta _{a,\varepsilon }>0$ and a sector $\Lambda
_{a,\varepsilon }(\theta )=\left\{ \lambda \in 
\mathbb{C}^{\ast };\,\,\left\vert \arg \lambda -\theta \right\vert \leq \delta
_{a,\varepsilon };\,\,\left\vert \lambda \right\vert >r_{\varepsilon
}\right\} $ such that these formulas are still valid with dual asymptotic
behaviors.
\item $\left\{ y_{1},y_{2}\right\} $ constitutes a fondamental system of
solutions (F.S.S) to (\ref{first ODE}) in $D$.
\end{enumerate}
\end{remark}

\subsubsection{Elementary basis}\label{EFSS}

We note the Stokes data $(D,l,z_{0})$ for: $D$ a canonical domain, $l$ a
Stokes line in $D$ and $z_{0}$ a turning point. We select the branch of $
h(\lambda ,z_{0},z)$ in $D$ such that $\Im h(\lambda ,z_{0},z)>0$ for $
z\in l$. The elementary basis $\left\{ u(z),v(z)\right\} $ associated to $
(D,l,z_{0})$ is uniquely determined by
\begin{equation}
\begin{cases}
     {u(z)=cy_{1}(z,\lambda )\text{,   \ }v(z)=cy_{2}(z,
\lambda )}, \\{\left\vert c\right\vert =1}, {arg (c)=\underset{
z\rightarrow z_{0},z\in l}{\lim }\arg (p_{a}(z))^{\frac{1}{4}})}
\end{cases}\label{elementary basis}
\end{equation}
where $y_{1}(z,\lambda )$ and $y_{2}(z,\lambda )$ are given in \eqref{uniform
asymp}.

\subsubsection{Transition matrices}

Let $(D,l,z)_{k}$ and $(D,l,z)_{j}$ be two Stokes data and $\beta
_{k,j}=\left\{ u_{k,j}\text{, }v_{k,j}\right\} $ their corresponding
elementary basis. The matrix $A_{kj}$ that changes the basis $\beta _{k}$ to $
\beta _{j}$ is called the transition matrix from $\beta _{k}$ to $\beta _{j}$.
It is clear that
\begin{equation*}
A_{kj}=A_{rj}A_{kr}\text{ , }A_{kj}^{-1}=A_{jk}
\end{equation*}

The classification made by Fedoryuk \cite[page 98-100]{fed1} gives three
types of "elementary" transition matrix. Any transition matrix from one
elementary (F.S.S) to another is the product of a finite number of this
elementary transition matrix.

\begin{enumerate}
\item \textit{The transition matrix for }$(D,l,z_{0})\rightarrow (D,l,z_{1})$
. This transition matrix exists only for a finite Stokes line that remains in the same canonical domain. It is given by
\begin{equation*}
A(\lambda )=\exp (i\sigma )\left( 
\begin{array}{cc}
0 & \exp (-i\left\vert \lambda \right\vert \alpha ) \\ 
\exp (i\left\vert \lambda \right\vert \alpha ) & 0
\end{array}
\right) \text{, }\alpha =\left\vert \varphi (z_{0},z_{1})\right\vert \text{, 
}\exp (i\sigma )=\frac{c_{2}}{c_{1}}
\end{equation*}

\item \textit{The transition matrix for }$(D,l_{0},z_{0})\rightarrow
(D,l_{1},z_{1})$. Here, the rays $h(l_{0})$ and $h(l_{1})$ are directed to
one side and $l_{0}$ lies on the left to $l_{1}$. This is the transition
from one turning point to another along an anti-Stokes line, remaining in
the same domain $D$.
\begin{equation*}
A(\lambda )=\exp (i\sigma )\left( 
\begin{array}{cc}
\exp (-\lambda \xi ) & 0 \\ 
0 & \exp (\lambda \xi )
\end{array}
\right) \text{, }\xi =\varphi (z_{0},z_{1})\text{, }\Re(\lambda \xi )>0
\text{, }\exp (i\sigma )=\frac{c_{2}}{c_{1}}
\end{equation*}

\item \textit{The transition matrix }$(D_{0},l_{0},z_{0})\rightarrow
(D_{1},l_{1},z_{0})$. Let $\left\{ l_{0}\text{, }l_{1}\text{, }l_{2}\right\} 
$ be Stokes lines starting at $z_{0}$, and let $l_{j+1}$ lie to the left to $
l_{j}$ (the order is counter-clockwise and indexed thus $4=1,..$.). We
choose the canonical domain $D_{j}$ on the left of $l_{j}$ that coincides with
the part of $D_{j+1}$ on the right of $l_{j+1}$. Then
\begin{equation*}
\left\{ 
\begin{array}{c}
A_{j,j+1}(\lambda )=\exp (-\frac{i\pi }{6})\left( 
\begin{array}{cc}
0 & \alpha _{j,j+1}^{-1}(\lambda ) \\ 
1 & i\alpha _{j+1,j+2}(\lambda )
\end{array}
\right) \\ 
\alpha _{j,j+1}(\lambda )=1+O(\lambda ^{-1})\text{, \ \ }1\leq j\leq 3 \\ 
\alpha _{1,2}(\lambda )\alpha _{2,3}(\lambda )\alpha _{3,1}(\lambda )=1\text{
, and }\alpha _{j,j+1}(\lambda )\alpha _{j+1,j}(\lambda )=1\text{ \label
{relation matrix}}
\end{array}
\right.
\end{equation*}
\end{enumerate}

\begin{remark}
\begin{enumerate}
\item The total classification of transition matrices gives four types of
such matrices (see \cite{fed1}). We need to compute the transition matrices only within $O(\lambda ^{-1})$, so it is sufficient to work with these
three types.

\item The relations in \ref{relation matrix} are direct conclusions from
the fact that $A_{3,1}A_{2,3}A_{1,2}=I$ and $A_{j,j+1}^{-1}=A_{j+1,j}$ ,
respectively.
\end{enumerate}
\end{remark}

\section{ Eigenvalue problems related to the cubic oscillator\label{main parag}}

\subsection{Admissible half planes}

Let $\theta \in \lbrack 0,\frac{\pi }{2}[$. We introduce the admissibility of half plane domains in order to classify eigenvalue problems:
\begin{definition}
\label{masero dif}
We say that two half-planes $H_{i}$ and $H_{k}$ are admissible if for every $
x_{i}\in H_{i}$, $x_{k}\in H_{k}$ there exists a canonical path $\gamma :[0,1]\rightarrow \mathbb{C}$
such that $\gamma(0)=x_{i}$ and $\gamma(1)=x_{k}$. For example two adjacent half planes are admissible.
\end{definition}

\begin{remark}\label{admissible remark}
\begin{enumerate}
\item In \cite{masero}, the author gives a
global classification to the critical graph in the case of complex cubic oscillator (with possible multiple turning points), but he
did not study the dependence of critical graph on extra parameters
which is the key of the classification of eigenvalue problems studied in this work.

\item Recall that a canonical domain is swept by exactly two
half plane (left and right) and up to two band domains. We easily deduce that
this two half plane are admissible and reciprocally if two half plane are
admissible then there exist a canonical domain that it contains.
\item Notice that, from the $\mathcal{D\diagup SG}$ correspondence paragraph \ref{sub1}, two non admissible half plane exist if and only if there exist a finite Stokes line.
\end{enumerate}
\end{remark}

\begin{definition}
\begin{itemize}
\item A Stokes complex $\Delta _{a,\theta }$ captures the half-plane $H$ if
one of the domains into which $\Delta _{a,\theta }$ splits the $z-$plane
contains $H$.

\item A Stokes complex $\Delta _{a,\theta }$ joins $H^{\prime }$ to $
H^{\prime \prime }$ if it captures both $H^{\prime }$ and $H^{\prime \prime
} $.
\end{itemize}
\end{definition}
\begin{example}
In a Stokes graph of \textbf{type A,} there is no Stokes complex that join
any two distinct half plane; while in a \textbf{Tree }Stokes graph, any two
distinct half plane are joined by at least one Stokes complex.
\end{example}

\subsection{Classification of Eigenvalue problem\label{main result
paragraph}}

Let $\theta \in \lbrack 0,\frac{\pi }{2}[$ and $\lambda =r\exp (i\theta )\in\mathbb{C} ^{\ast }$.

\begin{definition}
\label{accumulation direction}We say that $\theta $ is an accumulation
direction for the ODE: 
\begin{equation}
-y^{\prime \prime }(z)+\lambda ^{2}(z-a)(z^{2}-1)y(z)=0
\label{cubic equation}
\end{equation}
if it exists a pair of disjoint half plane $(H^{\prime },H^{\prime \prime })$ such
that this equation admit a subdominant solution (non-trivial) in $H^{\prime
} $ and $H^{\prime \prime }$.
\end{definition}

The next two results give a necessary and sufficient conditions for $\theta $
to be an accumulation direction to \eqref{cubic equation}.

\begin{proposition}
\label{main result2}Let $K$ be a compact subset of $
\mathbb{C}
\backslash \chi _{\theta }$. Then there exist $\delta_{K}>0$ such that there is no accumulation direction  in the sector $$ \Lambda
^{\delta _{K}}(\theta )=\left\{ \alpha \in \lbrack 0,\frac{\pi }{2}[;\,\,
\left\vert \theta -\alpha \right\vert \leq \delta _{K}\right\} $$  for all $a\in K$. In particular, $\theta$ is not an accumulation direction for all $a\in \mathbb{C}\setminus\chi_\theta$. 
\end{proposition}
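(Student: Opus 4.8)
The plan is to exploit the $\mathcal{D\diagup SG}$ correspondence (Theorem \ref{main theorem Thabet}) together with the WKB machinery of Subsection \ref{sub2} to show that, away from $\chi_\theta$, the Stokes graph is of \textbf{type A} and hence no two half-planes can simultaneously carry a subdominant solution. First I would fix a compact set $K\subset\mathbb{C}\setminus\chi_\theta$. Since $\chi_\theta=\mathcal{S}_{\pm1,\theta}\cup\mathcal{S}_{\blacktriangle,\theta}$ is closed and $K$ is compact, there is a positive distance between $K$ and $\chi_\theta$; the point is to upgrade this into a uniform statement for nearby angles $\alpha$. I would invoke the continuity of the sets $\Sigma_{\cdot,\theta}$ (equivalently of $\chi_\theta$) in the parameter $\theta$, as described in Section \ref{level sets} and \cite[Lemma 1, Proposition 2]{Thabet+al}: for $\delta_K$ small enough, $K$ remains disjoint from $\chi_\alpha$ for every $\alpha$ with $|\alpha-\theta|\le\delta_K$. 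Consequently, for all such $\alpha$ and all $a\in K$, the point $a$ lies in one of the open domains $\Omega_i$ associated to the angle $\alpha$, so by Theorem \ref{main theorem Thabet} the critical graph $\Gamma_{a,\alpha}$ has \textbf{type A}: no short trajectory, and it splits $\widehat{\mathbb{C}}$ into five half-plane domains $H_0,\dots,H_4$ and two strip domains $B_0,B_1$.

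The heart of the argument is the implication "\textbf{type A} $\Rightarrow$ no accumulation direction." Here I would use Remark \ref{admissible remark}(3): two half-planes fail to be admissible only when there is a finite (short) Stokes line, and in \textbf{type A} there is none. Hence in a \textbf{type A} graph \emph{every} pair $H_i,H_k$ is admissible, so each pair of half-planes is contained in a common canonical domain. Now suppose toward a contradiction that $\alpha$ (with $|\alpha-\theta|\le\delta_K$) is an accumulation direction for some $a\in K$: there are disjoint half-planes $H',H''$ and a non-trivial solution $y$ that is subdominant in both. Pick a canonical domain $D$ containing $H'\cup H''$. On $D$ we have the fundamental system $\{y_1,y_2\}$ of \eqref{uniform asymp}, with $y_{1,2}(z,\lambda)\sim (p_a(z))^{-1/4}\exp(\mp h(\lambda,z_0,z))$; subdominance in $H'$ forces $y$ to be a scalar multiple of the one of $y_1,y_2$ that decays there, and subdominance in $H''$ forces $y$ to be a multiple of the one that decays in $H''$. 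But a canonical domain is swept by exactly one "left" and one "right" half-plane together with its (up to two) strip domains, and along a canonical path crossing $D$ the quantity $\Re h(\lambda,z_0,z)$ is monotone; so $y_1$ and $y_2$ decay in \emph{opposite} half-planes of $D$. Therefore $y$ would have to be proportional to both $y_1$ and $y_2$, forcing $y\equiv 0$, a contradiction.

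The main obstacle, and the step I would spend the most care on, is the uniformity in $\alpha$: one must be sure that the "forbidden" set $\chi_\alpha$ does not suddenly expand to touch $K$ as $\alpha$ varies, i.e. that the continuity of the $\Sigma$-curves in $\theta$ from \cite{Thabet+al} is strong enough (locally uniform, away from the bifurcation values $\theta\in\{0,\pi/8,3\pi/8\}$ where the combinatorics of $\chi_\theta$ changes) to yield a single $\delta_K$ valid for all $a\in K$. A secondary technical point is to make the phrase "$H'$ and $H''$ lie in a common canonical domain, but $y_1$ and $y_2$ decay in opposite half-planes of it" fully rigorous, using Remark \ref{boundary-cond-remark}(2) (a solution subdominant in $H_k$ is dominant in every $H_j$, $j\ne k$) and the structure of a canonical domain recalled in Remark \ref{admissible remark}(2). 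Once these are in place, the final clause of the Proposition ("in particular $\theta$ is not an accumulation direction for all $a\in\mathbb{C}\setminus\chi_\theta$") is immediate by taking $K=\{a\}$ for an arbitrary $a\notin\chi_\theta$ and reading off the $\alpha=\theta$ case.
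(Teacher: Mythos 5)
Your proposal is correct and follows essentially the same route as the paper: both rest on Lemma \ref{relation sigma} (the compactness/continuity argument for the level sets) to obtain a single $\delta_K$ keeping $K$ away from $\Sigma_\alpha$, hence from $\chi_\alpha$, for all $|\alpha-\theta|\le\delta_K$, and then on the fact that in a \textbf{type A} graph every pair of half-planes is admissible, so that a nontrivial solution subdominant in one half-plane must be dominant in every other. The only difference is one of presentation: where you invoke the abstract canonical-domain dichotomy (the two basis solutions in \eqref{uniform asymp} decay in opposite half-planes of a common canonical domain, so no solution is recessive in both), the paper establishes the same fact concretely by continuing the asymptotic representation of the subdominant solution from $H_{-1}$ through the band domains $B_{0}$ and $B_{1}$ to all remaining half-planes, after cutting along suitable Stokes lines.
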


The main result of this subsection is the following:

\begin{theorem}
\label{main result1}Let  $a\in \mathcal{\chi }_{\theta }$. 
Then $\theta $ is
an accumulation direction to \eqref{cubic equation} if and only if one of the following conditions holds:

\begin{itemize}
\item  The Stokes graph is of \textbf{type B} and the boundary conditions  are given in two
half planes $H^{\prime }$ and $H^{\prime \prime }$ joined a finite
Stokes lines $\mathcal{\ell }$ , in this case there exists a Stokes complex $\Delta
_{a,\theta }$ that joins $H^{\prime }$ with $H^{\prime \prime }$.

\item $a\in \left\{ t_{\theta },e_{\theta }\right\} $and the boundary conditions are given in two non admissible half planes $H^{\prime }$ and $H^{\prime \prime }$
joined a finite Stokes line $\ell$ .
\\When  $\ell$  corresponds to the \textit{broken finite Stokes line}, we additionally require that 
$$\dsp\frac{\vert\oint_{C_{1}} e^{i\theta}\sqrt{p_a(t)}dt\vert}{\vert\oint_{C_{2}} e^{i\theta}\sqrt{p_a(t)}dt\vert}\in\mathbb{Q}$$
where $C_1$ and $C_2$ are two simple closed contours encircling, respectively, the two unbroken finite Stokes lines.
\end{itemize}

In addition, the spectrum is a discrete sequence $\left\{ \lambda
_{n}\right\} $ accumulating on the ray $L_{r_{\varepsilon}}(\theta )=\left\{
\rho \exp (i\theta );\,\,\rho \geq r _{\varepsilon}>0\right\} $ and we have
the asymptotic formula 
\begin{equation}
\left\vert \lambda _{n}\right\vert =(2n-1)\pi \left( \oint\limits_{C}\sqrt{
\left\vert p_{a}(z)\right\vert }dz\right) ^{-1}+O(n^{-1})
\label{spectrum-asymptotic}
\end{equation}
Here $C$ is a simple closed contour encircling $\ell $.

\end{theorem}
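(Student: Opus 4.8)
The proof proceeds by leveraging Theorem \ref{main theorem Thabet} (the $\mathcal{D\diagup SG}$ correspondence) together with the complex WKB machinery of Subsection \ref{sub2}. The key observation is that $\theta$ being an accumulation direction is, by Definition \ref{accumulation direction}, equivalent to the existence of a pair of disjoint half-planes $(H',H'')$ admitting a common subdominant solution; by Remark \ref{admissible remark}(3), two such half-planes fail to be admissible precisely when a finite Stokes line exists, which by Theorem \ref{main theorem Thabet} forces $a\in\chi_\theta$ and the Stokes graph to be of type \textbf{B}, \textbf{BB}, or \textbf{Tree}. So the bulk of the work is to decide, in each of these configurations, whether the transition matrix connecting the elementary bases attached to $H'$ and $H''$ can have its relevant entry vanish for a discrete sequence $\lambda_n$.

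\medskip

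\textbf{Step 1 (Reduction to non-admissible half-planes).} First I would argue the ``only if'' direction: if $\theta$ is an accumulation direction then, by Proposition \ref{main result2}, $a\notin\mathbb{C}\setminus\chi_\theta$, so $a\in\chi_\theta$; and a subdominant solution shared by disjoint $H',H''$ cannot exist if $H',H''$ are admissible, because along a canonical path joining them the WKB asymptotics \eqref{uniform asymp} force any solution subdominant in one to be dominant in the other (Remark \ref{boundary-cond-remark}(2), extended along canonical paths). Hence $H'$ and $H''$ are non-admissible, which by Remark \ref{admissible remark}(3) gives a finite Stokes line $\ell$; inspecting the type A/B/BB/Tree classification, the graph must be of type \textbf{B} (then $\ell\subset\partial B_0$ joins two half-planes) or $a\in\{t_\theta,e_\theta\}$ is a \textbf{Tree}; the type \textbf{BB} case is excluded because there the short trajectory meets $\partial B_0$ only at a turning point and one checks no two half-planes are separated by it alone. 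This also explains the appearance of the Stokes complex $\Delta_{a,\theta}$ joining $H'$ to $H''$.

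\medskip

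\textbf{Step 2 (Eigenvalue condition via transition matrices).} For the ``if'' direction and the asymptotic formula, fix the two boundary half-planes and write the subdominant solutions $y'$ (in $H'$) and $y''$ (in $H''$) in terms of elementary bases \eqref{elementary basis} attached to Stokes data on either side of $\ell$. Using the composition rule $A_{kj}=A_{rj}A_{kr}$, express the connection matrix as a product of the three elementary types from Subsection \ref{sub2}; crucially, crossing the finite Stokes line $\ell$ contributes a factor of type (1), $A(\lambda)=e^{i\sigma}\begin{pmatrix}0 & e^{-i|\lambda|\alpha}\\ e^{i|\lambda|\alpha} & 0\end{pmatrix}$ with $\alpha=|\varphi(z_0,z_1)|=\bigl|\oint_C\sqrt{p_a}\,\bigr|/2$ (half the period around $\ell$), while type (3) factors at the turning points contribute only $1+O(\lambda^{-1})$ corrections. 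The eigenvalue condition — the vanishing of the off-diagonal entry that couples the two subdominant rays — then reads $\cos(|\lambda|\alpha + \text{phase}) + O(\lambda^{-1}) = 0$ (or an analogous $\sin$), whose solutions form a discrete sequence with $|\lambda_n|\alpha = (n-\tfrac12)\pi + O(n^{-1})$; substituting $\alpha = \tfrac12\oint_C\sqrt{|p_a|}$ yields \eqref{spectrum-asymptotic}. Discreteness and accumulation on the ray $L_{r_\varepsilon}(\theta)$ follow because the WKB formulas are valid in the sector $\Lambda_{a,\varepsilon}(\theta)$ (Remark \ref{boundary-cond-remark}(3)), so all solutions lie in a thin sector around $\arg\lambda=\theta$.

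\medskip

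\textbf{Step 3 (The Tree case and the rationality hypothesis).} When $a\in\{t_\theta,e_\theta\}$ and $\ell$ is the \emph{broken} finite Stokes line — a juxtaposition of two short trajectories meeting at $t_\theta$ — the connection matrix now accumulates two oscillatory factors of type (1), with arguments $|\lambda|\alpha_1$ and $|\lambda|\alpha_2$ corresponding to the two periods $\oint_{C_1}$ and $\oint_{C_2}$. The vanishing condition becomes a sum of two oscillatory terms; for this to have a discrete solution sequence accumulating on a single ray one needs $\alpha_1/\alpha_2\in\mathbb{Q}$ so that the combined oscillation is genuinely periodic in $|\lambda|$ (otherwise the zeros are quasi-periodic and, generically, the ``eigenvalue'' condition has no solutions at all, or they do not accumulate correctly). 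This is exactly the stated hypothesis $|\oint_{C_1}e^{i\theta}\sqrt{p_a}|/|\oint_{C_2}e^{i\theta}\sqrt{p_a}|\in\mathbb{Q}$. I expect \textbf{this step — the careful bookkeeping of the two oscillatory phases in the Tree case and showing the rationality condition is both necessary and sufficient for a discrete accumulating spectrum — to be the main obstacle}; the type \textbf{B} case is essentially Fedoryuk's classical one-finite-Stokes-line computation, whereas the Tree case requires tracking the interplay of the two homology periods through a longer product of transition matrices and verifying that the $O(\lambda^{-1})$ errors do not destroy the rational-resonance structure.
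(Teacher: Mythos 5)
Your overall architecture (reduce to non-admissible half-planes, then decide case by case whether the connection condition across the finite Stokes line admits a discrete accumulating solution set) matches the paper's, and your Steps 2 and 3 capture the right mechanism: for type \textbf{B} the quantization condition $2|\lambda|\alpha=(2n-1)\pi+O(\lambda^{-1})$ with $\alpha=\tfrac12\oint_C\sqrt{|p_a|}$ gives \eqref{spectrum-asymptotic} (the paper obtains it by directly matching the two WKB solutions at two points $z^{+},z^{-}$ on either side of $\ell$ rather than by assembling Fedoryuk's elementary matrices, but the two computations are equivalent), and for the Tree case with the broken line the paper does exactly the transition-matrix bookkeeping you describe, with the rationality of the period ratio emerging as the solvability condition for the two superposed oscillations.

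There is, however, a genuine gap in your Step 1: your exclusion of the type \textbf{BB} configuration is wrong as stated. You claim that in type \textbf{BB} ``no two half-planes are separated by it alone,'' but by Remark \ref{admissible remark}(3) the mere existence of the finite Stokes line forces the existence of non-admissible half-planes, and the paper explicitly exhibits \emph{two} pairs of non-admissible half-plane domains in this configuration. So type \textbf{BB} cannot be discarded on Stokes-geometric grounds; it survives to the analytic stage. The paper eliminates it there: writing the connection condition for the non-admissible pair $(H_1,H_2)$ one gets $\exp\bigl(i\pi+2|\lambda|(h(\lambda,-1,a))_2\bigr)\bigl[1+O(\lambda^{-1})\bigr]=1$, and the point is that $\Re\,h(\lambda,-1,a)=\Re\,h(\lambda,-1,1)\neq 0$ because the two turning points bounding this pair are \emph{not} the ones joined by the short trajectory; hence the left-hand side has modulus bounded away from $1$ for large $|\lambda|$ and no eigenvalues accumulate on the ray. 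Without this computation your ``only if'' direction is incomplete, since a priori the type \textbf{BB} graphs would also produce accumulation directions. You should replace the geometric dismissal by this action-integral argument.
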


\begin{remark}\label{rmk}
\begin{enumerate}
\item As was shown in (\ref{sub1}) the topology of $
\mathbb{C}\diagdown \Sigma _{\theta }$ is invariant for all $\theta \in ]0,\frac{\pi }{
2}[$, so for simplicity we will deal with the proof for $\theta =\frac{\pi }{
4}$.

\item For $\theta \in \left\{ 0,\frac{\pi }{2}\right\} $ we have $t_{\theta
}\in \left\{ -1,1\right\} $ and in this case we may have a double turning
point at $\pm 1$.
\item The Table bellow in Figure \ref{tab2} gives all possible cases of non admissible half plane and the number of possible eigenvalue problem in each cases.
\begin{figure}[h]
  \centering
  \includegraphics[width=\textwidth, height=\textheight, keepaspectratio]{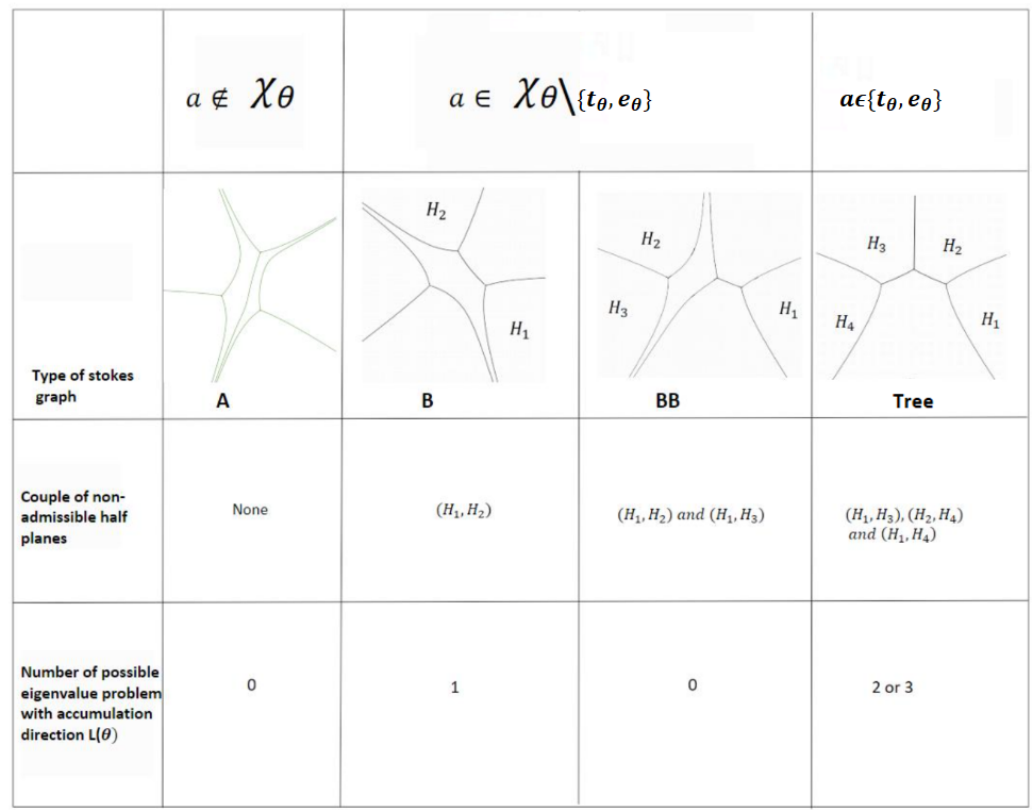}
  \caption{}
  \label{tab2}
\end{figure}  
\end{enumerate}
\end{remark}
 
\newpage

%\begin{remark}
%From main results, proposition \ref{main result2} and Theorem \ref{main result1}, we provide a
%geometric construction for a family of polynomial potentials $p_{a}$ that solves problem \ref{main problem}. Furthermore, by $\mathcal{D\diagup SG}$
%correspondence in subsection \ref{sub1}, we deduce that the set $\left\{ a\in \mathbb{C};\text{ problem\ref{main problem} has solution}\right\} $ has a strictly positive (Lebesgue) measure. Similar results were obtained in \cite{shin2}.
%The author constructs potentials which solve (\ref{main problem}) in the case of Sturm-Liouville problem. The methods used there are analytic. In \cite[
%theorem 1.6]{shin2}, the asymptotic distribution of the eigenvalue, which is
%analogues to (\ref{spectrum-asymptotic}), was investigated to reconstruct 
%\textit{"some"} coefficients of the polynomial potential. 5Our method is
%different in this case.
%\end{remark}
\section{Zeros of Eigenfunctions}\label{zeros section}

The distribution of zeros of eigenfunctions plays a crucial role in the classical Sturm-Liouville theory and interpolation theory. In
particular, the exact or asymptotic locations of zeros are crucial to analyze the completeness of the space of eigenfunctions in expansion problems, see \cite{Bender,trinh,titshmarch}. In this subsection, we try
to analyze the location and the asymptotic distribution of zeros of eigenfunctions
related to the cubic oscillator defined by \eqref{cubic equation}, as $\left(\lambda ,a\right) $ varies in $\mathcal{D}$.\\
Let $\theta \in \lbrack 0,\frac{\pi }{2}[$ and $\lambda =r \exp (i\theta )\in \mathbb{C}^{\ast }$. By Theorem \ref{main result1}, $\theta $ is an accumulation direction to $-y^{\prime \prime }(z)+\lambda
^{2}(z-a)(z^{2}-1)y(z)=0$ if and only if:

 \begin{itemize}
     \item $a\in \mathcal{\chi }_{\theta }\diagdown \left\{ t_{\theta},e_{\theta
}\right\} $\text{ and the Stokes graph is of }\textbf{type B}
\item $ a\in \left\{ t_{\theta },e_{\theta }\right\}$,

 \end{itemize}

Therefore, there exists a pair of non admissible half-planes $H^{\prime }$
and $H^{\prime \prime }$ joined by Stokes complex $\Delta _{a,\theta },$
such that the eigenvalue problem: 
\begin{equation}
\begin{cases}
 {-y^{\prime \prime }(z)+\lambda ^{2}(z-a)(z^{2}-1)y(z)=0}\\{ \quad y\text{
is subdominant in }H^{\prime }\cup H^{\prime \prime }}
\end{cases}\label{zeros-EP}
\end{equation}
has a non trivial solution $y_{a,\lambda }(z)$. The spectrum is a
discrete sequence $\left\{ \lambda _{n}\right\} $ accumulating on a ray $
L_{r _{\varepsilon}}(\theta )=\left\{ \rho \exp (i\theta );\,\,\rho \geq
r _{\varepsilon}>0\right\}$. The geometric
multiplicity of every eigenvalue is one, see \cite{titshmarch,sibuya}
. Let $f_{a,n}(z)$ be an eigenfunction associated to $\lambda _{n}$, for $
n\in \mathbb{N}$.\\
We introduce the set
\begin{equation*}
\mathcal{Z}_{a,n}=\left\{ z\in \mathbb{C}\mid f_{a,n}(z)=0\right\}
\end{equation*}
Our goal is the description of $\mathcal{Z}_{a,n}$ for $a$ $\in $ $\mathcal{
\chi }_{\theta }$. It is well known that $
f_{a,n}(z)$ is an entire function of order $\frac{\deg p_{a}+2}{2}=\frac{5}{2
}$ and of finite type, see \cite{Hille, sibuya}, hence it has
infinitely many zeros. Consequently, the set $\mathcal{Z}_{a,n}$ is not
empty when $a$ is described above, and is non reduced to a finite set of
points. It is trivial to see that the set $\mathcal{Z}_{a,n}$ is empty if $a$ 
$\notin $ $\mathcal{\chi }_{\theta }$  or
the Stokes graph is of \textbf{type BB}. 
\subsection{The Stokes Graph is of \textbf{Type B}}\label{caseB}
In this case $a\in \mathcal{\chi }_{\theta }\diagdown \left\{ t_{\theta
},e_{\theta }\text{ (if they exist)}\right\} $ and $H^{\prime }$ and $H^{\prime
\prime }$ joined by Stokes complex $\Delta _{a,\theta }$ with no broken
finite Stokes line $\ell$. We treat, for simplicity, the case $a\in \mathcal{S}_{1,\theta }$ (the other cases are similar). The Stokes complex 
$\Delta _{a,\theta }$ joins $H^{\prime }$ and $H^{\prime \prime }$ so that a
finite Stokes line $\ell $ connects the two turning points $z_{0}\in $ $
\partial H^{\prime }$ and $z_{1}\in \partial H^{\prime \prime }$. We denote
by $\left( l_{z_{j}}^{r}\right) _{j=0,1}^{r=0,1,2}$ the three trajectories that emanate from turning points. Let $l_{z_{0}}^{0}=l_{z_{1}}^{0}=\ell $ $\in \partial B_{0}$, where $B_{0}$ is the band domain, be the finite Stokes line and $\partial H^{\prime }=l_{z_{0}}^{1}\cup l_{z_{0}}^{2}$, $\partial
H^{\prime \prime }=l_{z_{1}}^{1}\cup l_{z_{1}}^{2}$ . By \eqref{boundary-cond-remark}, the boundary conditions in \eqref{zeros-EP} are equivalent to $\underset{r\rightarrow +\infty }{\lim }f_{a,n}(r\exp (i\alpha
^{\prime }))=\underset{r\rightarrow +\infty }{\lim }f_{a,n}(r\exp (i\alpha
^{^{\prime \prime }}))=0$, where $\alpha ^{\prime },\alpha ^{^{\prime \prime
}}$ are two admissible directions at $\infty $ (for example two anti-Stokes
directions \eqref{critical directions}) that correspond respectively to $H^{\prime }$ and $H^{\prime \prime }$.\\
Fix $\varepsilon >0$. Let $D_{\varepsilon }$ be a canonical domain that
contains $H^{\prime }$ with $\varepsilon $-neighborhoods of the cuts and $\varepsilon $-neighborhoods of $z_{0}$ removed. Fix a branch of the square root denoted by $\left( \sqrt{}\right) _{1}$ and $\left( \sqrt[4]{}\right) _{1}$ such that $\Re(h(\lambda _{n},z_{0},z))>0$ in $H^{\prime }$, where $h(\lambda _{n},z_{0},z)=\lambda _{n}\int_{z_{0}}^{z}\left( \sqrt{p_{a}(t)}\right) _{1}dt$. For every $z\in D_{\varepsilon }$, there exists an infinite
canonical path $\gamma _{\varepsilon }^{+}(z)$ to $\infty $. By \eqref{uniform asymp}, $f_{a,n}$ have the asymptotic formula
\begin{equation*}
\label{formula1}
f_{a,n}(z)=c_{1}(p_{a}(z))^{-\frac{1}{4}}\exp (-h(\lambda _{n},z_{0},z))[1+
\mathcal{O}_{1}(1)]
\end{equation*}
where $c_{1}$ is a constant and $\mathcal{O}_{1}(1)\rightarrow 0$, as $
z\rightarrow \infty $, $z\in \gamma _{1,\varepsilon }^{+}$ for $\lambda _{n}$
in $L_{r_{\varepsilon}}(\theta )$ . The maximal domain of applicability $
D_{H^{\prime },\varepsilon }^{1}$ of this formula is constructed by taking
the union of all the half plane admissible with $H^{\prime }$ minus a tubular neighborhood of width $2\varepsilon $ around the cuts ($\varepsilon $-neighborhood on the right and $\varepsilon $ -neighborhood on the left), the
resulting domain is shown in Figure \eqref{MDA'}. \\
In the same way, if we choose a branch of the square root $\left( \sqrt{}
\right) _{2}$ and $\left( \sqrt[4]{}\right) _{2}$ such that $\Re
(h(\lambda _{n},z_{1},z))>0$ in $H^{^{\prime \prime }}$, where $h(\lambda
_{n},z_{1},z)=\lambda _{n}\int_{z_{1}}^{z}\left( \sqrt{p_{a}(t)}\right)
_{2}dt$, and then  $f_{a,n}(z)$ admits an asymptotic formula 
\begin{equation*}\label{formula2}
f_{a,n}(z)=c_{2}(p_{a}(z))^{-\frac{1}{4}}\exp (-h(\lambda _{n},z_{1},z))[1+
\mathcal{O}_{2}(1)]
\end{equation*}
where $c_{2}$ is a constant and $\mathcal{O}_{2}(1)\rightarrow 0$, as $z\rightarrow \infty $, $z\in \gamma _{2,\varepsilon }^{+}$ for $\lambda _{n}$
fixed in $L_{r_{\varepsilon}}(\theta )$. The maximal domain of applicability $
D_{H^{^{\prime \prime }},\varepsilon }^{2}$ is constructed as in the
    previous case (see  Figure \ref{MDA''}).\\
Note that we should take care about the branch of the square root. For example, in $H^{\prime }$ we have $\left( \sqrt{}\right) _{2}=-\left( \sqrt{}
\right) _{1}$ and $\left( \sqrt[4]{}\right) _{2}=i\left( \sqrt[4]{}\right)
_{1}$. We denote by $\Pi _{\varepsilon }$ the resulting domain (see Figure \ref{pi_eps}).
\begin{figure}[h] 
\begin{minipage}[b]{0.3\linewidth}\centering\includegraphics[width=0.9\textwidth]{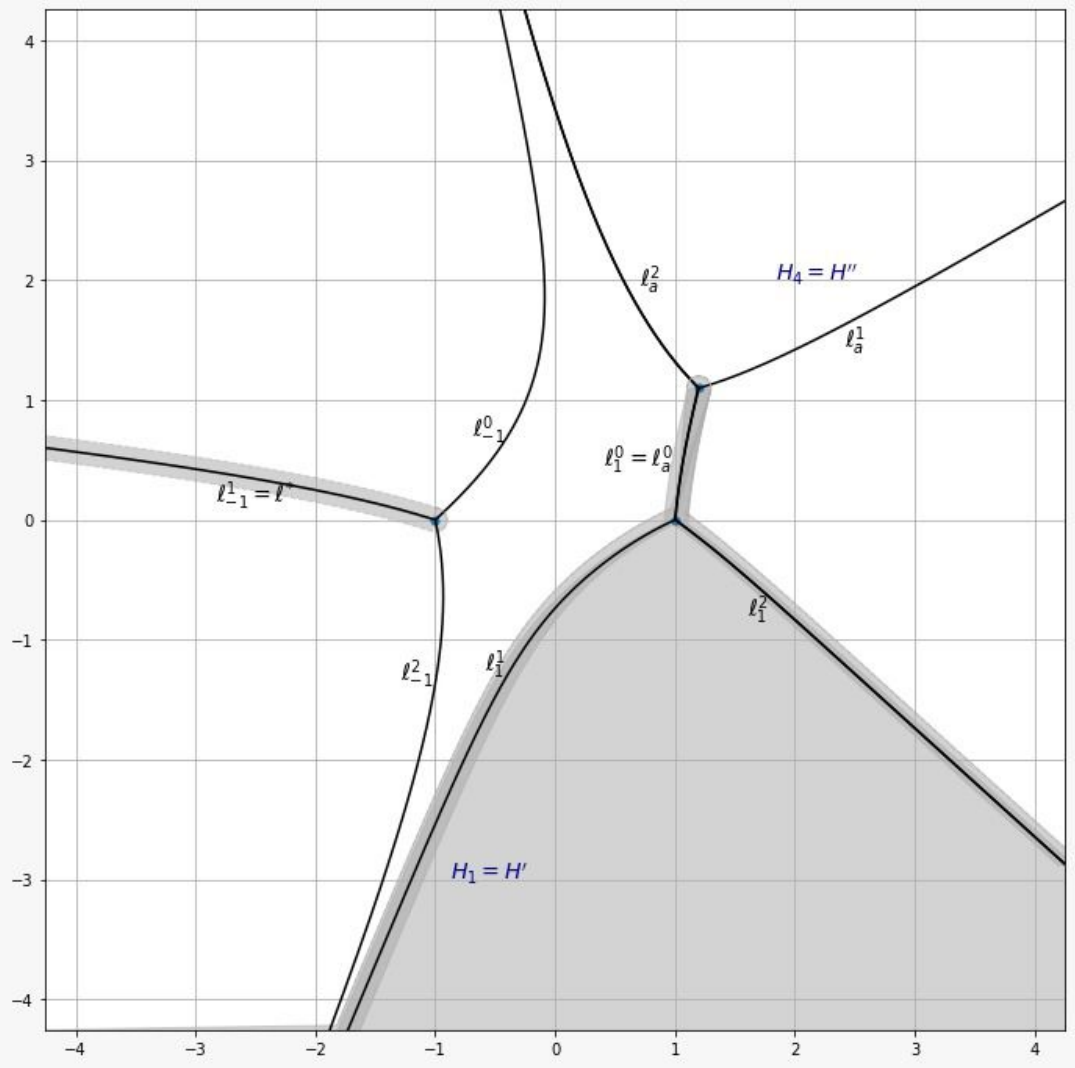}
  \caption{$D^1_{H',\varepsilon} $: The unshaded domain}\label{MDA'}
  \end{minipage} \hfill   \begin{minipage}[b]{0.3\linewidth}
  \centering
  \includegraphics[width=0.9\textwidth]{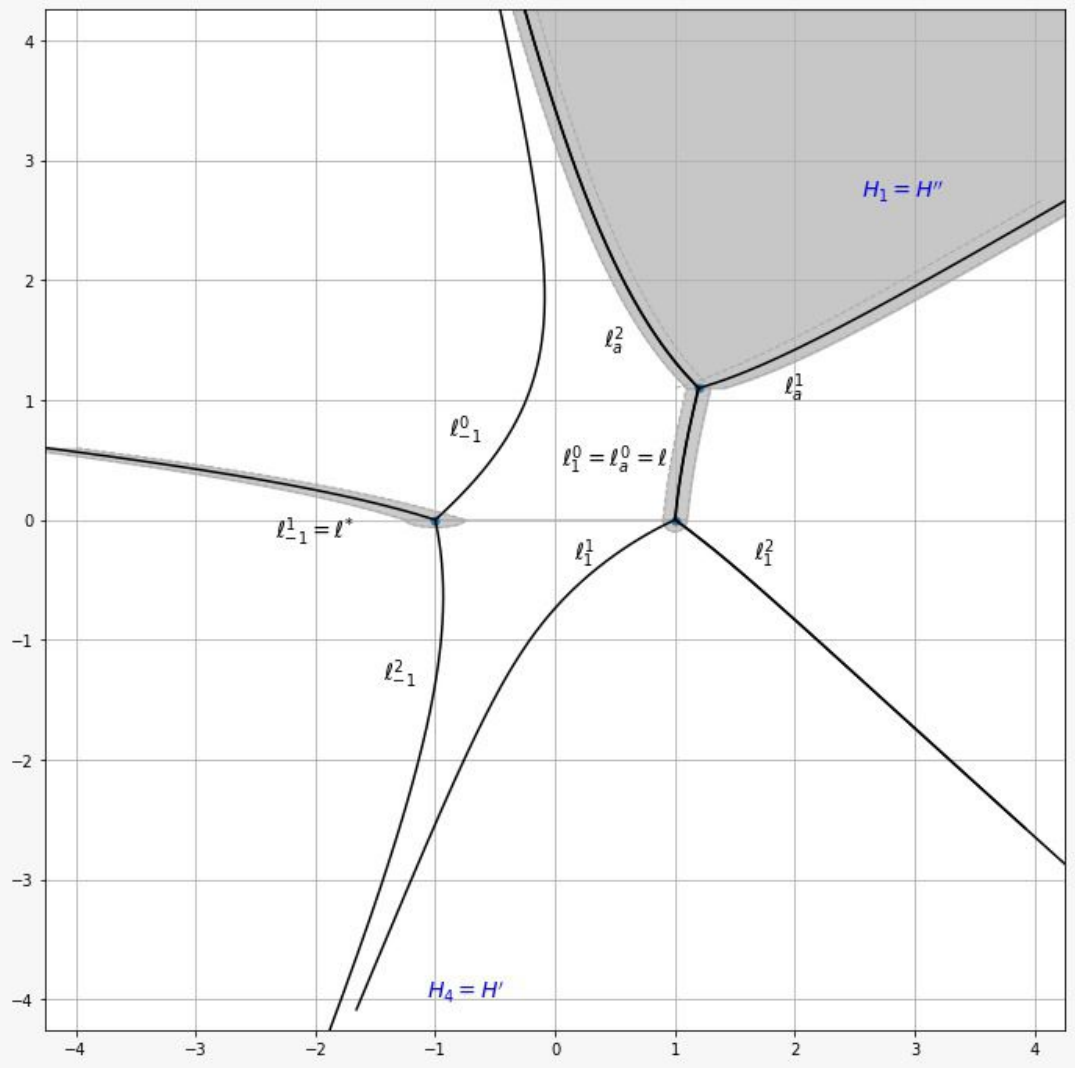}
  \caption{$D^2_{H'',\varepsilon}$: The unshaded domain}\label{MDA''}
  \end{minipage}
  \hfill   \begin{minipage}[b]{0.3\linewidth}
  \centering
  \includegraphics[width=0.9\textwidth]{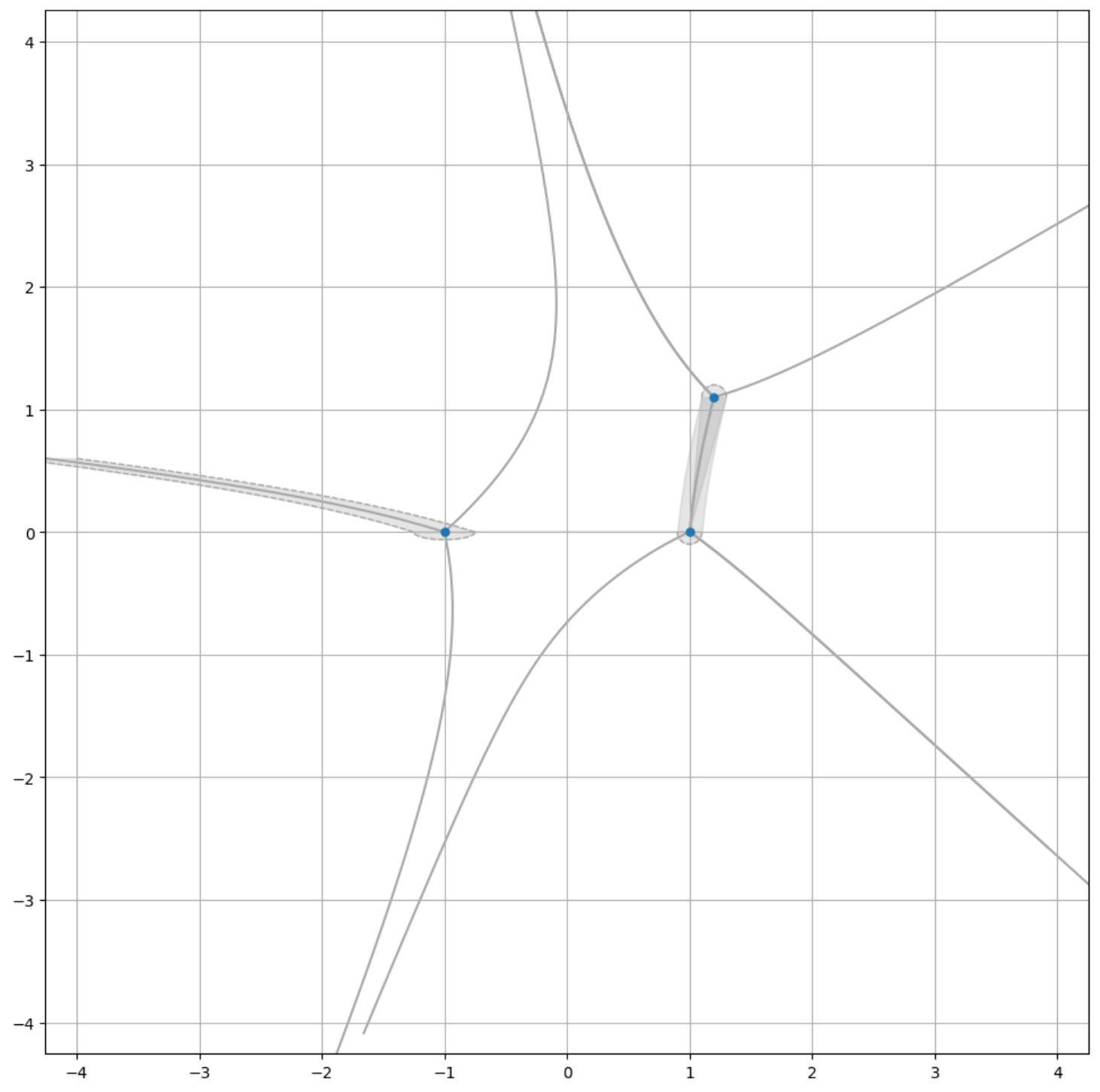}
    \caption{$\Pi _{\varepsilon }$ (unshaded domain)}
     \label{pi_eps}
  \end{minipage}
  
\end{figure}

The main result of this subsection is:

\begin{proposition}
\label{domain zeros}
$f_{a,n}(z)\neq 0$ for all $z\in \Pi _{\varepsilon }$. 
\end{proposition}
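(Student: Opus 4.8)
The plan is to observe that at every point of $\Pi_{\varepsilon}$ the eigenfunction $f_{a,n}$ coincides, up to a non-zero multiplicative constant, with one of the two recessive WKB solutions displayed just before the statement, and that such a solution is patently zero-free once its error term is controlled in modulus by $1$.

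First I would record the structural facts built into the construction of $\Pi_{\varepsilon}$: it is covered by the two maximal domains of applicability $D^{1}_{H',\varepsilon}$ and $D^{2}_{H'',\varepsilon}$; on the first of these $f_{a,n}$ is subdominant in $H'$ and therefore --- by uniqueness of the recessive solution in a half-plane, together with the fact that $D^{1}_{H',\varepsilon}$ is the union of all half-planes admissible with $H'$ with the $\varepsilon$-tubes around the cuts deleted --- it is a constant multiple $c_{1}\,y_{1}(\cdot,\lambda_{n})$, hence
$$f_{a,n}(z)=c_{1}\,(p_{a}(z))^{-\frac{1}{4}}\exp\bigl(-h(\lambda_{n},z_{0},z)\bigr)\,\bigl[1+\phi_{1}(z,\lambda_{n})\bigr],\qquad z\in D^{1}_{H',\varepsilon},$$
and symmetrically $f_{a,n}=c_{2}\,y_{2}(\cdot,\lambda_{n})$ with data $(z_{1},\phi_{2})$ on $D^{2}_{H'',\varepsilon}$ (in the overlap the branches are matched by $(\sqrt{})_{2}=-(\sqrt{})_{1}$, $(\sqrt[4]{})_{2}=i(\sqrt[4]{})_{1}$, as already noted above). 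The constants $c_{1},c_{2}$ are non-zero: if $c_{1}=0$ then $f_{a,n}$ would vanish on the open set $D^{1}_{H',\varepsilon}$, hence identically, contradicting that $f_{a,n}$ is a non-trivial eigenfunction.

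Next I would invoke the \emph{uniform-in-$z$} bound from \eqref{uniform asymp} --- namely $\lvert\phi_{l}(z,\lambda)\rvert\le c_{l}\,r_{\varepsilon}/(\lvert\lambda\rvert-r_{\varepsilon})$ for all $z$ in the relevant $D_{\varepsilon}$ and all $\lvert\lambda\rvert>r_{\varepsilon}$ --- rather than the weaker $z\to\infty$ version. Enlarging $r_{\varepsilon}$ if necessary (equivalently, discarding finitely many eigenvalues, which is harmless since $\lvert\lambda_{n}\rvert\to\infty$ along $L_{r_{\varepsilon}}(\theta)$), we may assume $\lvert\phi_{l}(z,\lambda_{n})\rvert<1$, so that $1+\phi_{l}(z,\lambda_{n})\neq 0$ throughout. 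Now fix $z_{*}\in\Pi_{\varepsilon}$; it lies in $D^{1}_{H',\varepsilon}$ or in $D^{2}_{H'',\varepsilon}$, say the former, and from the displayed formula $f_{a,n}(z_{*})$ is a product of four non-vanishing factors: $c_{1}\neq 0$; the factor $(p_{a}(z_{*}))^{-1/4}$, which is finite and non-zero because $z_{*}$ has been removed from $\Pi_{\varepsilon}$ together with a tubular neighbourhood of the cuts, so it is not a turning point and $p_{a}(z_{*})\notin\{0,\infty\}$; the exponential, which never vanishes; and $1+\phi_{1}(z_{*},\lambda_{n})$, which is non-zero by the previous line. Hence $f_{a,n}(z_{*})\neq 0$; the case $z_{*}\in D^{2}_{H'',\varepsilon}$ is identical with the branches $(\sqrt{})_{2},(\sqrt[4]{})_{2}$ and the data $(z_{1},\phi_{2},c_{2})$.

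The only step that is more than bookkeeping is the one invoked at the start of the second paragraph: checking that the WKB representation is genuinely valid throughout $\Pi_{\varepsilon}$ with mutually compatible branches, i.e.\ that $\Pi_{\varepsilon}\subset D^{1}_{H',\varepsilon}\cup D^{2}_{H'',\varepsilon}$ and that the two branch conventions glue on the overlap. This is exactly the geometric pay-off of the maximal-domain-of-applicability construction for the \textbf{type B} Stokes graph and of Fedoryuk's continuation algorithm; granting it, the proposition follows as above.
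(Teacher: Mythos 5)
Your argument takes a genuinely different route from the paper's, and the difference matters. The paper does not try to show that the bracket $[1+\phi_{1}]$ is zero-free. Instead it argues by contradiction: assuming $f_{a,n}(X)=0$ for some $X\in \Pi_{\varepsilon}$, it integrates $f_{a,n}^{\prime\prime}\overline{f_{a,n}}=\lambda_{n}^{2}p_{a}\left\vert f_{a,n}\right\vert^{2}$ by parts along a canonical path from $X$ to $\infty$, obtaining
\begin{equation*}
0=\int_{\gamma_{1,\varepsilon}^{+}}\left\vert f_{a,n}^{\prime}(z)\right\vert^{2}\,\overline{dz}+\int_{\gamma_{1,\varepsilon}^{+}}\lambda_{n}^{2}p_{a}(z)\left\vert f_{a,n}(z)\right\vert^{2}\,dz ,
\end{equation*}
and then, after transplanting to the $\zeta=\phi(z)$ plane and choosing the path to be horizontal (or an L-shaped path when the horizontal ray meets a cut), shows that the right-hand side is strictly positive. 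The only analytic inputs are the vanishing of the boundary term at $\infty$ (subdominance) and the $z\to\infty$ behaviour of $\left\vert f_{a,n}^{\prime}\right\vert/\left\vert f_{a,n}\right\vert$; no smallness of the WKB error over the whole domain is needed, so the conclusion holds for every eigenvalue with $\left\vert\lambda_{n}\right\vert>r_{\varepsilon}$.

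Your product-of-nonvanishing-factors argument is simpler but has two real weaknesses. First, it needs $\left\vert\phi_{l}(z,\lambda_{n})\right\vert<1$ uniformly on $\Pi_{\varepsilon}$, and the only way you guarantee this is to take $\left\vert\lambda_{n}\right\vert$ large compared with $c_{l}r_{\varepsilon}$; you therefore prove the proposition only for all but finitely many $n$, whereas the statement (and its use in Corollary \ref{zeros corollary}, which describes $\mathcal{Z}_{a,n}$ for each $n$) carries no such restriction. Second, the uniform-in-$z$ bound of \eqref{uniform asymp} is established in Theorem \ref{WKB theorem} on a single canonical domain $D_{\varepsilon}$, while $D_{H^{\prime},\varepsilon}^{1}$ is the union of all half-planes admissible with $H^{\prime}$; on that larger set the paper only asserts the weaker $z\to\infty$ version of the error estimate. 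The extension is true --- cover $D_{H^{\prime},\varepsilon}^{1}$ by the finitely many canonical domains containing $H^{\prime}$, note that the recessive solution in $H^{\prime}$ is unique up to a constant in each of them, and take the worst constant --- but this is exactly the step you defer to ``Fedoryuk's continuation algorithm'' without carrying it out. If you repair both points (restrict to $n\geq N_{0}$ and make the covering argument explicit), your proof is valid for large $n$; to obtain the proposition for every $n$ you need something like the paper's integral identity.
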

\begin{corollary}
\label{zeros corollary}
$\mathcal{Z}_{a,n}$ has two components: a bounded
component $\mathcal{Z}_{a,n}^{b}$ in $\ell $ and unbounded $\mathcal{Z}_{a,n}^{unb}$ contained in an infinite Stokes line $\ell ^{\ast }$.
\end{corollary}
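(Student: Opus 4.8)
The plan is to derive the Corollary from Proposition~\ref{domain zeros} by combining a topological description of $\mathbb{C}\setminus\Pi_\varepsilon$ with the oscillatory WKB representation of $f_{a,n}$ near the Stokes lines that $\Pi_\varepsilon$ leaves uncovered. Since $f_{a,n}$ is entire, $\mathcal{Z}_{a,n}$ is a set of isolated points, and by Proposition~\ref{domain zeros} each of them lies in $\mathbb{C}\setminus\Pi_\varepsilon$. First I would unwind the construction of $\Pi_\varepsilon$: it is assembled from the maximal domains of applicability $D^1_{H',\varepsilon}$ and $D^2_{H'',\varepsilon}$, so that $\mathbb{C}\setminus\Pi_\varepsilon$ reduces to the $2\varepsilon$-tubular neighbourhood of the cuts, that is, of the band domain $B_0$ together with the finite Stokes line $\ell\subset\partial B_0$. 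Reading off Figure~\ref{pi_eps} and the type-\textbf{B} structure of Theorem~\ref{main theorem Thabet}, this neighbourhood has a bounded part surrounding $\ell$ (the segment $z_0z_1$) and an unbounded part running along a single infinite Stokes line $\ell^{\ast}$ on $\partial B_0$, the two being separated by the removed $\varepsilon$-neighbourhood of the turning point at which $\ell$ and $\ell^{\ast}$ meet. Defining $\mathcal{Z}_{a,n}^b$ and $\mathcal{Z}_{a,n}^{unb}$ as the intersections of $\mathcal{Z}_{a,n}$ with the bounded and the unbounded part respectively then yields the asserted two components.

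Next I would pin these zeros onto $\ell$ and $\ell^{\ast}$. Because the Stokes graph is of type~\textbf{B}, $f_{a,n}$ is recessive on both sides of $\ell$, namely in $H'$ and in $H''$. Writing $f_{a,n}$ in the elementary basis attached to the Stokes data $(D,\ell,z_0)$ and transporting it across $\ell$ by the anti-diagonal, unit-modulus transition matrix of type~(1), the two one-term formulas of \eqref{uniform asymp} combine, along the cut, into a balanced form $f_{a,n}(z)=(p_a(z))^{-1/4}\bigl[c_1e^{-h(\lambda_n,z_0,z)}+c_2e^{h(\lambda_n,z_0,z)}\bigr](1+o(1))$ with $|c_1|=|c_2|$. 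Since $\Re h(\lambda_n,z_0,\cdot)\equiv0$ on the Stokes line, the bracket is, to leading order, $2|c_1|\cos\bigl(\Im h(\lambda_n,z_0,z)+\mathrm{const}\bigr)$, so its roots sit within $o(1)$ of the points of $\ell\cup\ell^{\ast}$ where $\Im h(\lambda_n,z_0,z)\in\pi\bigl(\mathbb{Z}+\tfrac12\bigr)$. On the bounded arc $\ell$ the phase $\Im h$ runs over a finite interval (as $|\lambda_n|$ is fixed), so the half-integer condition has finitely many solutions: $\mathcal{Z}_{a,n}^b$ is finite and contained in $\ell$. Along $\ell^{\ast}$ the same phase tends to $+\infty$, producing infinitely many zeros escaping to $\infty$; since $f_{a,n}$ has order $5/2$ and hence infinitely many zeros, these necessarily exhaust $\mathcal{Z}_{a,n}^{unb}$, which is thus the unbounded component and lies in $\ell^{\ast}$.

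The hard part will be to show that no zeros escape along any infinite Stokes line other than $\ell^{\ast}$. Here I would argue by dominance: along each infinite Stokes line bordering $H'$ or $H''$, one of $e^{\pm h}$ strictly dominates the other, so the bracketed WKB factor stays bounded away from $0$ and $f_{a,n}$ cannot vanish in a neighbourhood of such a line; only on the particular boundary trajectory $\ell^{\ast}$ of the band $B_0$ does the recessive/dominant balance survive all the way to $\infty$, and isolating this one line from the type-\textbf{B} geometry of Theorem~\ref{main theorem Thabet} is the crux. The secondary technical point is the passage from ``within $o(1)$ of'' to ``on'' the Stokes lines: I would exploit that the dual estimates \eqref{uniform asymp} hold for every fixed small $\varepsilon$ once $|\lambda_n|>r_\varepsilon$, let $\varepsilon\to0$, and apply a Rouch\'e (Hurwitz) argument to conclude that in the limit the roots of the leading phase equation are exactly the points of $\mathcal{Z}_{a,n}$, located on $\ell$ and $\ell^{\ast}$. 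Since this subsection treats only type~\textbf{B}, the broken-finite-Stokes-line situation $a\in\{t_\theta,e_\theta\}$ does not arise and no further splitting of $\ell^{\ast}$ need be handled.
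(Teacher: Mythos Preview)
Your proposal is correct in outline but works much harder than the paper does. The paper's proof of the Corollary is essentially two sentences: (i) by Proposition~\ref{domain zeros}, $\mathcal{Z}_{a,n}\subset\mathbb{C}\setminus\Pi_\varepsilon$, and by the construction of $\Pi_\varepsilon$ (Figure~\ref{pi_eps}) this complement already consists of just an $\varepsilon$-tube around $\ell$ and an $\varepsilon$-tube around a single infinite Stokes line $\ell^\ast$; (ii) if some zero $t$ satisfied $\mathrm{dist}(t,\ell\cup\ell^\ast)>c>0$, then taking $\varepsilon<c$ puts $t\in\Pi_\varepsilon$, contradicting Proposition~\ref{domain zeros}. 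No oscillatory representation, no Rouch\'e, no separate dominance check is invoked.

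What you add --- the balanced form $f_{a,n}\sim(p_a)^{-1/4}\bigl[c_1e^{-h}+c_2e^{h}\bigr]$ with $|c_1|=|c_2|$, the cosine-phase localisation of zeros, and the dominance argument ruling out the remaining infinite Stokes lines --- is sound but unnecessary for this Corollary. The ``hard part'' you flag (excluding other infinite Stokes lines) is not hard in the paper's setup: those lines lie inside $D^1_{H',\varepsilon}\cup D^2_{H'',\varepsilon}=\Pi_\varepsilon$ by construction, which is exactly why Figure~\ref{pi_eps} shows only two shaded tubes. Likewise your passage ``from within $o(1)$ to on'' via $\varepsilon\to0$ and Hurwitz is replaced by the bare distance argument in~(ii). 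Your machinery is much closer to what the paper deploys in the \emph{next} result, Proposition~\ref{finite zeros}, where one actually needs to count zeros; for the Corollary itself, the shrinking-$\varepsilon$ trick suffices.
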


\begin{proof}
From the previous proposition \ref{domain zeros}, we deduce that $\mathcal{Z}_{a,n}$ is contained in $\mathbb{C}\setminus\Pi _{\varepsilon }$. By the isolated zeros principle for entire functions \cite{AHLFORS}, we deduce that $\mathcal{Z}_{a,n}$ has two components: a bounded
component $\mathcal{Z}_{a,n}^{b}$ near $\ell $ and unbounded $\mathcal{Z}_{a,n}^{unb}$ near an infinite Stokes line $\ell^{\ast }$.\\
Suppose now that there exists $t\in\mathcal{Z}_{a,n}$ such that $dist(t,\ell \cup \ell ^{\ast}\mathcal{)>}c\mathcal{>}0$, where $dist$ is the usual Hausdorff metric, then by choosing $0<\varepsilon
<c$, we obtain a contradiction with \ref{domain zeros}. This achieves the proof.
\end{proof}

\begin{proposition}\label{finite zeros}
There exists $n_{0}(a)\in \mathbb{N}$ such that for $n\geq n_{0}(a)$, $\mathcal{Z}_{a,n}^{b}$ contain exactly $n$ zeros in $\ell$.
\end{proposition}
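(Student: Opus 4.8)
\textbf{Proof plan for Proposition \ref{finite zeros}.}
The plan is to count the zeros of $f_{a,n}$ along the finite Stokes line $\ell$ using the exact WKB connection formula and a Sturm-type oscillation argument. By Corollary \ref{zeros corollary} the bounded component $\mathcal{Z}_{a,n}^{b}$ lies near $\ell$, the short trajectory joining $z_{0}$ and $z_{1}$, which borders the band domain $B_{0}$. The idea is that along the band domain the eigenfunction behaves, after the Liouville transformation, like a solution of a one-dimensional problem on an interval whose ``length'' in the $h$-coordinate equals $|\lambda_{n}|\,\alpha$ with $\alpha=|\varphi(z_{0},z_{1})|=\left|\oint_{C}\sqrt{p_{a}(t)}\,dt\right|/2$, the half-period over $\ell$. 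A zero of $f_{a,n}$ on $\ell$ corresponds to a vanishing of the relevant combination of the two subdominant recessive solutions, and the number of such vanishings is governed by how many times the phase $|\lambda_{n}|\alpha$ wraps around $\pi$.

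First I would set up the local description: on a neighborhood of $\ell$ inside $B_{0}$, write $f_{a,n}$ as a linear combination of the elementary basis solutions attached to the Stokes data $(D,\ell,z_{0})$, using the uniform asymptotics \eqref{uniform asymp}. The boundary conditions force $f_{a,n}$ to be (a multiple of) the subdominant solution $y_{1}$ in $H'$ and simultaneously the subdominant solution in $H''$; transporting one description to the other across the band via the transition matrices of type (1) and (3) produces the quantization condition, whose leading part is exactly the equation $\sin(|\lambda_{n}|\alpha + \text{phase})=O(|\lambda_{n}|^{-1})$ that yields the asymptotic \eqref{spectrum-asymptotic}. Second, on the segment $\ell$ itself, parametrize by the real variable $s=\Im h(\lambda_{n},z_{0},z)\in[0,|\lambda_{n}|\alpha]$; along $\ell$ the two WKB exponentials $\exp(\pm h)$ have equal modulus, so $f_{a,n}$ restricted to $\ell$ is, up to the nonvanishing prefactor $(p_{a})^{-1/4}$ and a factor $1+O(|\lambda_n|^{-1})$, of the form $A\cos(s)+B\sin(s)$ with $A,B$ depending on $\lambda_{n}$ but not on $s$; hence it has exactly $\lfloor |\lambda_{n}|\alpha/\pi\rfloor$ or that $\pm 1$ zeros in the interior of $\ell$. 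Third, feed in the eigenvalue asymptotics: from \eqref{spectrum-asymptotic} we have $|\lambda_{n}|\alpha = |\lambda_n|\cdot\frac{1}{2}\oint_{C}\sqrt{|p_a|} = (2n-1)\pi/2 + O(n^{-1})$, so $|\lambda_{n}|\alpha/\pi = n - \tfrac12 + O(n^{-1})$, which gives exactly $n$ zeros once $n$ is large enough that the $O(n^{-1})$ error cannot push the count across an integer; this defines $n_{0}(a)$.

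The main obstacle, and where care is needed, is controlling the error terms uniformly near the two turning points $z_{0}$ and $z_{1}$, where the WKB prefactor $(p_{a})^{-1/4}$ is singular and the simple ``$A\cos s + B\sin s$'' picture degenerates: near a turning point the solution is described by an Airy function rather than a trigonometric one. I would handle this by the standard matching argument — use the Airy-type connection formula (equivalently, the type (3) transition matrix in \eqref{relation matrix}) in an $\varepsilon$-neighborhood of each $z_{j}$ to show that each turning point contributes at most a bounded number of zeros and, more precisely, that the total count picks up the familiar Maslov-type correction already built into the $(2n-1)$ rather than $2n$ in \eqref{spectrum-asymptotic}. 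The remaining subtlety is to rule out zeros of $f_{a,n}$ that lie near $\ell$ but off it (in $\mathbb{C}\setminus\Pi_\varepsilon$ but not on $\ell$): Proposition \ref{domain zeros} already confines $\mathcal{Z}_{a,n}$ to a neighborhood of $\ell\cup\ell^{*}$, and shrinking $\varepsilon$ as in the proof of Corollary \ref{zeros corollary} pins the bounded component to $\ell$ itself, so the trigonometric count on $\ell$ is exhaustive. Assembling these three ingredients — quantization condition, trigonometric zero count on $\ell$, and turning-point matching — yields exactly $n$ zeros of $f_{a,n}$ in $\ell$ for all $n\ge n_{0}(a)$.
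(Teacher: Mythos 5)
Your overall strategy (WKB quantization plus an oscillation count along $\ell$, with Airy matching at the turning points) is different from the paper's, which counts zeros by the argument principle: the paper writes $N_{n}(a)=\frac{1}{2i\pi}\oint_{C}\frac{f_{a,n}^{\prime}(z)}{f_{a,n}(z)}\,dz$ for a contour $C$ encircling $\ell$ inside $\Pi_{\varepsilon}$, substitutes the WKB expansion of the logarithmic derivative $\frac{f_{a,n}^{\prime}}{f_{a,n}}=-\frac{1}{4}\frac{p_{a}^{\prime}}{p_{a}}\pm\lambda_{n}\sqrt{p_{a}}+o(n^{-1})$ (uniformly on $C$, using Corollary \ref{asymp derive}), evaluates the two contour integrals via the residue count of $p_{a}^{\prime}/p_{a}$ and Lemma \ref{lm1}, and feeds in \eqref{spectrum-asymptotic}. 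That route is a two-real-dimensional count and needs no discussion of where exactly the zeros sit relative to $\ell$, nor any turning-point matching.

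The genuine gap in your version is the second step. The bounded component $\mathcal{Z}_{a,n}^{b}$ consists of zeros of $f_{a,n}$ that lie \emph{near} $\ell$, not on it: Proposition \ref{domain zeros} and Corollary \ref{zeros corollary} only confine them to an $\varepsilon$-neighborhood, and for fixed $n$ you cannot shrink $\varepsilon$ to $0$ because the validity of \eqref{uniform asymp} requires $|\lambda_{n}|>r_{\varepsilon}$ with $r_{\varepsilon}\to\infty$ as $\varepsilon\to 0$. Restricting the complex-analytic function $f_{a,n}$ to the one-real-dimensional curve $\ell$ and counting vanishings there is therefore not a count of $\mathcal{Z}_{a,n}^{b}$: the exact zeros are $O(|\lambda_{n}|^{-1})$-perturbations of the zeros of the leading WKB term and are generically displaced off $\ell$, and a generic analytic function restricted to a curve has no zeros at all, so the multiplicative error $1+O(|\lambda_{n}|^{-1})$ can destroy every one of your trigonometric sign changes on $\ell$ without removing any zero from the neighborhood. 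To repair this you must convert your trigonometric count into a planar one, e.g.\ by applying Rouch\'e's theorem on the contour $C$ to compare $f_{a,n}$ with its leading WKB approximant (whose zeros you can locate exactly), or by passing directly to the argument principle as the paper does. A second, smaller defect is that your count ends with an unresolved $\pm 1$ ambiguity ($\lfloor|\lambda_{n}|\alpha/\pi\rfloor$ with $|\lambda_{n}|\alpha/\pi=n-\tfrac12+O(n^{-1})$), deferred to an unspecified ``Maslov-type correction'' from the Airy matching; since the whole content of the proposition is the exact integer $n$, this correction must be carried out explicitly, whereas the contour-integral route produces the additive constant $-\tfrac{1}{4}\cdot\frac{1}{2i\pi}\oint_{C}\frac{p_{a}^{\prime}}{p_{a}}\,dz$ automatically.
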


\begin{proof}
First, let $N_{n}(a)$ be the number of zeros in $\mathcal{Z}
_{a,n}^{b}$, and let $C$ be a simple closed contour encircling $\ell $ in $\Pi
_{\varepsilon }$. It is obvious that $N_{n}(a)=\frac{1}{2i\pi }
\oint\limits_{C}\left( \frac{f_{a,n}^{\prime }(z)}{f_{a,n}(z)}\right) dz$.
For $\varepsilon >0$, let $D$ a canonical domain that contains $\ell $ and $
D_{\varepsilon }$ as described in (\ref{caseB}). Fix a branch of the square
root in $D_{\varepsilon }$. By WKB formulas (\ref{uniform asymp},\ref{WKB
derivative}) we have:
\begin{eqnarray*}
\dsp f_{a,n}(z) &=&cy_{l}(z)=c(p_{a}(z))^{\frac{-1}{4}}\exp (\pm h(\lambda
_{n},z_{0},z))[1+\phi _{l}(z,\lambda _{n})] \\
f_{a,n}^{\prime }(z) &=&cy_{l}^{\prime }(z)=c\frac{-1}{4}\frac{p_{a}^{\prime
}(z)}{(p_{a}(z))^{\frac{5}{4}}}\exp (\pm h(\lambda _{n},z_{0},z))[1+\phi
_{l}(z,\lambda _{n})]\pm \\
c\lambda _{n}&(p_{a}(z))^{\frac{1}{4}} &\exp(\pm h(\lambda
_{n},z_{0},z))[1+\phi _{l}(z,\lambda _{n})]+
c(p_{a}(z))^{\frac{-1}{4}}\exp
(\pm h(\lambda ,z_{0},z))\phi _{l}^{\prime }(z,\lambda _{n})
\end{eqnarray*}
where $\phi _{l}(z,\lambda _{n})\rightarrow 0$ as $n\rightarrow +\infty ,$
uniformly for $z\in D_{\epsilon }$ and $l\in \left\{ 1,2\right\}$.\\
So we obtain:
\begin{equation*}
\frac{f_{a,n}^{\prime }(z)}{f_{a,n}(z)}=\frac{-1}{4}\frac{p_{a}^{\prime }(z)
}{(p_{a}(z))}\pm \lambda _{n}\sqrt{p_{a}(z)}+\frac{\phi _{l}^{\prime
}(z,\lambda _{n})}{[1+\phi _{l}(z,\lambda _{n})]}.
\end{equation*}
By (\ref{asymp derive}), $\phi _{l}^{\prime }(z,\lambda _{n})\rightarrow 0$
as $n\rightarrow +\infty $, uniformly for $z\in D_{\epsilon }$ and $l\in
\left\{ 1,2\right\} $. This formulas still valid, as described in Section (\ref
{caseB}), as $n\rightarrow +\infty$, uniformly for $z\in D_{H^{\prime},\varepsilon }^{1}$ (or uniformly for $z\in D_{H^{^{\prime \prime
}},\varepsilon }^{2}$). Consequently, 
\begin{equation*}
\frac{f_{a,n}^{\prime }(z)}{f_{a,n}(z)}=\frac{-1}{4}\frac{p_{a}^{\prime }(z)
}{(p_{a}(z))}\pm \lambda _{n}\sqrt{p_{a}(z)}+o(n^{-1}),
\end{equation*}
as $n\rightarrow +\infty $, uniformly for $z\in \Pi _{\varepsilon }$ (for any choice of square root).\\
We deduce that $N_{n}(a)=\frac{-1}{4}\frac{1}{2i\pi }\oint\limits_{C}( 
\frac{p_{a}^{\prime }(z)}{(p_{a}(z))} dz)\pm \frac{\lambda _{n}}{2i\pi 
}\oint\limits_{C}( \sqrt{p_{a}(z)} dz)+o(n^{-1})$. From $\lambda_{n}=\dsp\left\vert \lambda _{n}\right\vert \exp (i\theta )$, we have $N_{n}(a)= \dsp \frac{-1}{4}\frac{1}{2i\pi }\oint\limits_{C}( \frac{p_{a}^{\prime }(z)}{
(p_{a}(z))}dz)\pm \frac{\left\vert \lambda _{n}\right\vert }{2i\pi }
\oint\limits_{C}\left( \exp (i\theta )\sqrt{p_{a}(z)}\right) dz+o(n^{-1})$,
as $n\rightarrow +\infty$.\\
From Lemma (\ref{lm1}), we have 
\begin{eqnarray*}
\oint\limits_{C}\left( \exp (i\theta )\sqrt{p_{a}(z)}\right) dz &=&\pm
2\int_{\ell }\left( \exp (i\theta )\sqrt{p_{a}(z)}\right) dz \\
&=&\pm 2i\int_{\ell }\Im\left( \exp (i\theta )\sqrt{p_{a}(z)}dz\right)
\\
&=&\pm i\oint\limits_{C}\sqrt{\left\vert p_{a}(z)\right\vert }\left\vert
dz\right\vert .
\end{eqnarray*}
By (\ref{spectrum-asymptotic}) and since $\dsp \frac{1}{2i\pi }\oint\limits_{C}
( \frac{p_{a}^{\prime }(z)}{(p_{a}(z))} dz)=2$, 
\begin{equation*}
N_{n}(a)=n+o(n^{-1}).
\end{equation*}
There exists $n_{0}(a)\in \mathbb{N}$ such that for $n\geq n_{0}(a),o(n^{-1})<1$, and $N_{n}(a)=n$. This achieves
the proof.
\end{proof}

\begin{remark}

\begin{enumerate}
\item The results (\ref{domain zeros},\ref{zeros corollary}, \ref{finite zeros}) show that zeros of eigenfunctions have a limit distribution in the plane which depends only on the Stokes geometry. In \cite{eremenko3} the symmetries of the $\mathcal{SG}$ were investigated to establish similar results for an appropriate rescaled eigenfunctions, resulting from a Sturm-Liouville problem, which correspond to a particular case of our work (namely $\theta =\frac{\pi }{4}$ and $a\in \mathcal{\chi }_{\frac{\pi }{4}})$. Similar results were obtained in \cite{hezari} for real quartic potential. 

\item The result (\ref{finite zeros}) gives a proof of the conjecture
proposed by Trinh \cite{trinh} for the exact location of a finite number of
zeros of PT-symmetric cubic oscillator eigenfunctions derived from a
Sturm-Liouville problem ( $\theta =\frac{\pi }{4}$ in our work).
\end{enumerate}
\end{remark}

\subsection{The Boutroux Graph case}
Let $a\in t_{\theta},e_{\theta}\text{(if it exists)}\}$ , If the boundary conditions $H^{\prime }$ and $H^{\prime\prime }$ are joined by the unbroken short trajectory $l_1$ or $l_2$, then the eigenvalue problems  are similar to the case when the stokes graph is of \textbf{type B}. Consequently, the set $\mathcal{Z}_{a,n}$ consists of two components located along \textbf{the marked Stokes lines}, as illustrated in Figures \ref{FIG28} and \ref{FIG.29}.
\begin{figure}[tbh]
\begin{minipage}[b]{0.4\linewidth}
		\centering\includegraphics[scale=0.35]{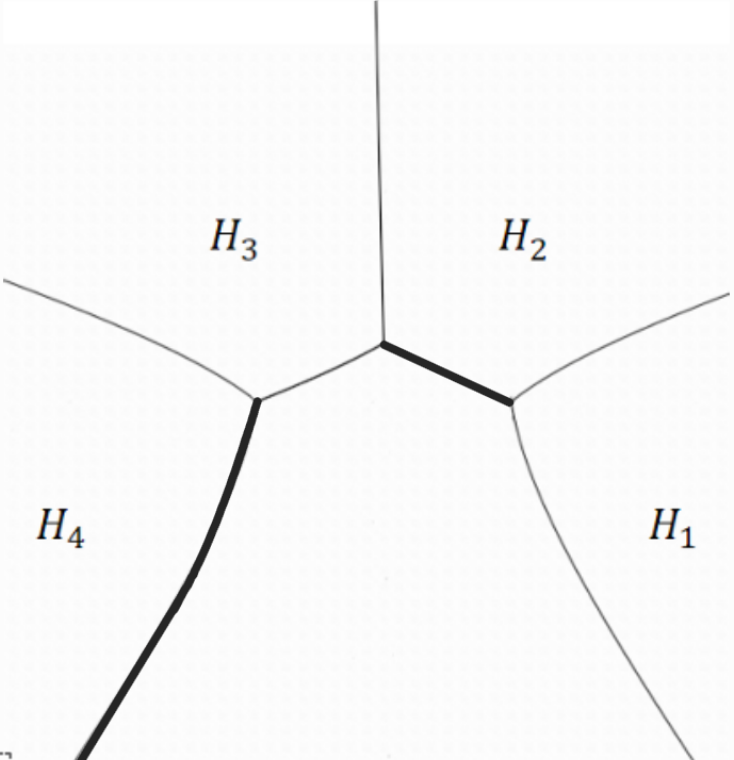}
		\caption{$(H',H'')=(H_1,H_3)$}\label{FIG28}
\end{minipage}\hfill\begin{minipage}[b]{0.4\linewidth}
	\centering\includegraphics[scale=0.35]{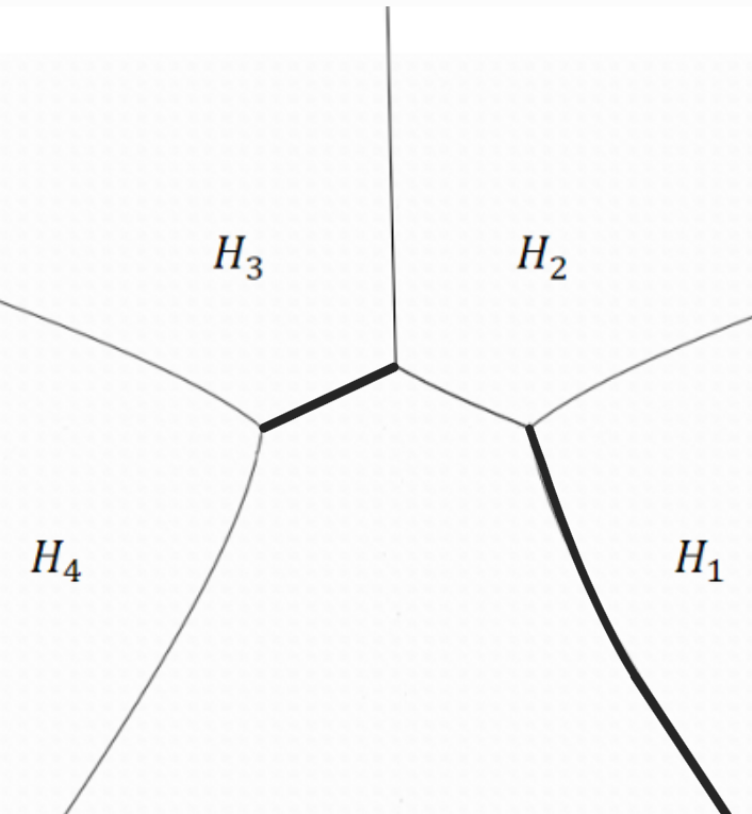}
	\caption{$(H',H'')=(H_2,H_4)$}\label{FIG.29}
\end{minipage}\hfill\end{figure}

Furthermore, Theorem \ref{main result1} implies another possible case: $H^{\prime }$ and $H^{\prime\prime }$ are joined by the broken short trajectory $l_1\cup l_2$ when 
$$
\alpha=\dsp\frac{\vert\oint_{C_{1}} e^{i\theta}\sqrt{p_a(t)}dt\vert}{\vert\oint_{C_{2}} e^{i\theta}\sqrt{p_a(t)}dt\vert}\in\mathbb{Q}
$$ 
where $C_1$ and $C_2$ are two simple closed contours encircling  $l_1$ and $l_2$, respectively. In this case $\mathcal{Z}_{a,n}$ is entirely supported on \textbf{the marked Stokes lines}, as shown in Figure \ref{fig12}.
\begin{figure}[h]
    \centering
    \includegraphics[width=0.33\linewidth]{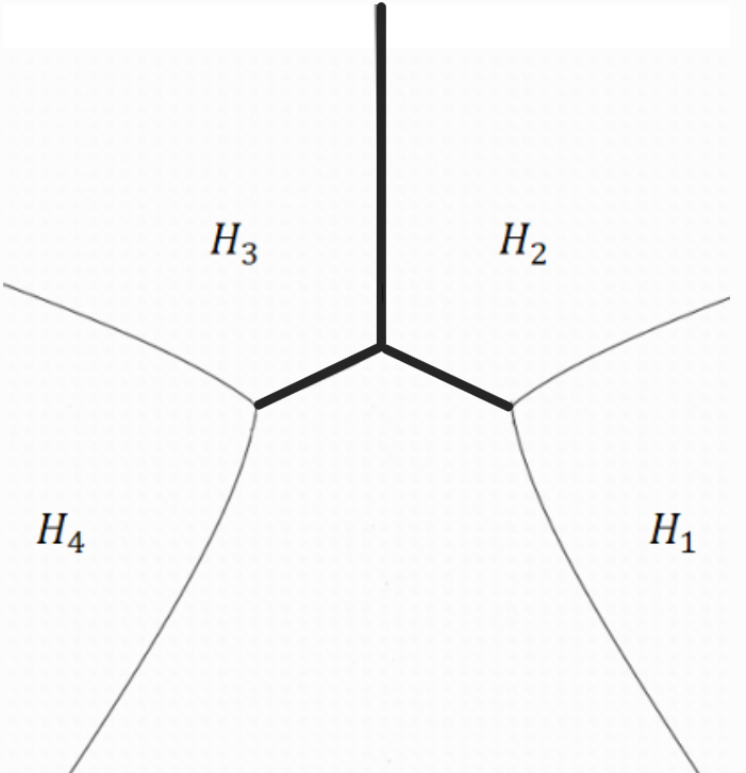}
    \caption{$(H',H'')=(H_1,H_4)$}
    \label{fig12}
\end{figure}

\begin{remark}
    As shown in Proposition \ref{finite zeros}, for sufficiently large $n$, the number of zeros supported on a short trajectory $l$ is proportional to $\left|\oint_{C} e^{i\theta}\sqrt{p_a(t)}\,dt\right|$, where $C$ is a simple closed contour encircling $l$. Consequently, if $\alpha>1$, the density of zeros in $l_1$ is greater than in $l_2$; if $\alpha<1$, the reverse is true.
\end{remark}
\section{Applications}\label{applications}

\subsection{Cubic oscillator with infinite real zeros solution}
As a first application to the results obtained in Section \ref{zeros section}, we try to construct polynomial potential $P_{x}$ such that $\deg(P_{x})=3$ and the ODE
\begin{equation*}
y^{\prime \prime }(z)+P_{x}(z)y(z)=0
\end{equation*}
admits solutions with infinity real zeros. This problem was introduced in \cite[Problem $2.71$]{brann} and has been investigated
later in \cite{eremenko1, eremenko2, gunderson, shin1, titshmarch}.
\begin{proposition}
There exists a family of polynomials $\left( P_{x}\right) _{x\in \Theta },$
where $\Theta $ is an unbounded curve in the plane and $\deg \left(
P_{x}\right) =3$, such that the ODE 
\begin{equation*}\label{ODE-question}
y^{\prime \prime }(z)+P_{x}(z)y(z)=0
\end{equation*}
has a non trivial solution with infinitely real zeros and finitely non real
zeros. In addition, for some value $x_{crit\,\,}$, the number of non
real zeros is even and there are symmetric with the real axis.
\end{proposition}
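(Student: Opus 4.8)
The idea is to realize the ODE $y'' + P_x(z)y = 0$ as a rescaled instance of the cubic oscillator \eqref{cubic equation} and then invoke the results of Section \ref{zeros section} to force the zeros of an eigenfunction onto a short trajectory that is an interval of the real axis. First I would set $P_x(z) = -\lambda^2 p_a(z)$ for a suitable choice of the data $(\theta,a)\in\mathcal D$ and a suitable rescaling $z\mapsto \mu z + \nu$; after an affine change of variable any cubic polynomial with three distinct roots can be brought to the normal form $-\lambda^2(z-a)(z^2-1)$, as noted in the first item of the Remark following \eqref{our quadratic}. The requirement is that the resulting equation admit a nontrivial solution with infinitely many real zeros; by Corollary \ref{zeros corollary} and Proposition \ref{finite zeros} the unbounded component $\mathcal Z_{a,n}^{unb}$ of the zero set lies along an infinite Stokes line $\ell^\ast$, while the bounded component $\mathcal Z_{a,n}^b$ lies along the short trajectory $\ell$. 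So I want to pick $(\theta,a)$ — hence, after undoing the rescaling, pick the parameter $x$ — so that \emph{both} $\ell$ and $\ell^\ast$ map, under $z\mapsto \mu z+\nu$, onto subsets of $\mathbb R$.

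The natural regime is the PT-symmetric/self-adjoint one: take $\theta$ and $a$ so that the quadratic differential $\varpi_{a,\theta}$ has a symmetry under complex conjugation (equivalently, after rescaling, $P_x$ is real on $\mathbb R$). Concretely I would look for $a$ real with $a\in(-1,1)$, so that all three turning points $-1,a,1$ are real; then by the symmetry relations \eqref{symmetric relations} and the description of $\chi_\theta$ in Notation \ref{Notations sigma} one can arrange $a\in\mathcal S_{\blacktriangle,\theta}$ or $a\in\mathcal S_{\pm1,\theta}$ for an appropriate $\theta$, putting us in a \textbf{type B} Stokes graph with the short trajectory $\ell$ being a real segment (either $[-1,a]$, $[a,1]$, or $[-1,1]$). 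The conjugation symmetry then forces the infinite Stokes line $\ell^\ast$ emanating from the free turning point to be the real ray beyond it. Letting $x$ (equivalently $a$, after rescaling) range over the portion of $\mathcal S_{\blacktriangle,\theta}$ (or $\mathcal S_{\pm1,\theta}$) that lies on the real axis traces out the unbounded curve $\Theta$ in the statement; for each such $x$ the eigenfunction $f_{a,n}(z)$ of \eqref{zeros-EP}, pulled back by the affine map, is a nontrivial solution of $y''+P_xy=0$ with $\mathcal Z^{unb}$ real (infinitely many real zeros by the order/type argument recalled before \eqref{zeros-EP}) and $\mathcal Z^b\subset\ell$ real and finite (Proposition \ref{finite zeros} gives exactly $n$ of them for $n\ge n_0(a)$). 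That yields the first assertion.

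For the last sentence — the distinguished value $x_{crit}$ at which the non-real zeros become even in number and symmetric about $\mathbb R$ — I would pass to the boundary case $a=t_\theta$ (or $e_\theta$), i.e. the \emph{Tree} Stokes graph, which occurs only at the isolated parameter value described in Theorem \ref{main theorem Thabet}(2); this is the end of the curve $\Theta$ and corresponds to $x=x_{crit}$. At such $a$ the broken short trajectory $l_1\cup l_2$ is itself symmetric under conjugation (the two unbroken finite Stokes lines are complex-conjugate arcs), so by the Boutroux-graph discussion of Subsection \ref{caseB}'s sequel, $\mathcal Z_{a,n}$ lies along the marked Stokes lines, and the non-real part consists of the zeros on $l_1$ together with their conjugates on $l_2$ — hence an even number, symmetric with the real axis. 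The main obstacle I expect is the bookkeeping of branches of the square root and of the affine rescaling: one must verify that the chosen $(\theta,a)$ genuinely lands in $\mathcal S_{\blacktriangle,\theta}$ (or $\mathcal S_{\pm1,\theta}$) for \emph{some} admissible $\theta$ and that the image of the finite and infinite Stokes lines under $z\mapsto\mu z+\nu$ really is real — this amounts to checking that the period integral $\Re\int e^{i\theta}\sqrt{p_a}\,dz$ vanishes on the relevant segment, i.e. that the reality of $P_x$ is compatible with the accumulation-direction constraint from Theorem \ref{main result1}. The rest is a direct application of Corollary \ref{zeros corollary} and Proposition \ref{finite zeros}.
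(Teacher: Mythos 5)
Your overall strategy---realize $y''+P_xy=0$ as an affine/rescaled instance of the normalized cubic oscillator and then read off the zero locations from Corollary \ref{zeros corollary} and Proposition \ref{finite zeros}---is the right starting point, but the specific configuration you choose does not work, and it is not the one the paper uses. You propose to take all three turning points real (say $a\in(-1,1)$, $\theta=0$) so that \emph{both} the short trajectory $\ell$ and the infinite Stokes line $\ell^{\ast}$ are real. Two things go wrong. First, the parameter set you would obtain is bounded: by the description in Subsection \ref{level sets}, $\Sigma_{\blacktriangle,\theta}$ lies in the upper half-plane strip and meets $\mathbb{R}$ only at the single point $s_{\blacktriangle,\theta}$, and the real locus of $\Sigma_{\pm1,0}$ inside the relevant region is a bounded interval; so ``the portion of $\mathcal{S}_{\blacktriangle,\theta}$ (or $\mathcal{S}_{\pm1,\theta}$) that lies on the real axis'' cannot serve as the \emph{unbounded} curve $\Theta$ required by the statement. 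Second, if both $\ell$ and $\ell^{\ast}$ are real then every zero is (asymptotically) real, which trivializes the clause about non-real zeros and makes the $x_{crit}$ assertion unreachable: for $\theta=0$ the Tree/Boutroux parameter degenerates ($t_0\in\{-1,1\}$, a double turning point, cf.\ Remark \ref{rmk}), so there is no broken short trajectory consisting of two conjugate non-real arcs on which an even, conjugation-symmetric set of non-real zeros could sit.

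The paper's proof uses a different and essential idea that your proposal is missing: a rotation rather than a real self-adjoint reduction. One starts from the PT-symmetric configuration $\theta=\pi/4$, $a\in\mathcal{S}_{\blacktriangle,\pi/4}$, for which the eigenfunction $f_a$ has its infinitely many zeros on the \emph{imaginary} axis (the infinite Stokes line) and its finitely many zeros on the non-real short trajectory joining $-1$ to $1$. Setting $g_a(z)=f_a(iz)$ turns the equation into $g''+|\lambda|^2(z-x)(z^2+1)g=0$ with conjugate non-real turning points $\pm i$ and a third root $x$ tracing the rotated (unbounded) curve $\Theta=\mathcal{S}_{i,\blacktriangle,0}$; the rotation sends $\ell^{\ast}$ to the real axis (infinitely many real zeros) while keeping $\ell$ non-real (finitely many non-real zeros). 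The endpoint of $\Theta$, $x_{crit}$ with $ix_{crit}=t_{i,0}$, gives the Boutroux graph whose broken short trajectory has two arcs symmetric about $\mathbb{R}$, yielding the even, conjugation-symmetric set of non-real zeros. Your instinct to use the Tree graph for $x_{crit}$ is correct in spirit, but only after this rotation does it produce the claimed picture.
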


\begin{proof}
Starting with the eigenvalue problem:
\begin{equation*}
\left\{ 
\begin{array}{c}
-y^{\prime \prime }(z)+\left\vert \lambda \right\vert
^{2}i(z-a)(z^{2}-1)y(z)=0 \\ 
y(\pm \infty )=0
\end{array}
\right. 
\end{equation*}
where  $a\in \mathcal{S}_{\blacktriangle ,\frac{\pi }{4}}$, see Notation \ref
{Notations sigma}. By Theorem \ref{main result1}, the eigenvalue problem
had a non trivial solution with boundary conditions in two half planes $
H^{\prime }$ and $H^{\prime \prime }$ joined by a Stokes complex that
contains $\pm \infty $ as anti-Stokes directions (see \ref{critical
directions}). Let $f_{a}$ be the eigenfunction that corresponds to $\lambda $.

By Section \ref{zeros section}, all zeros of $f_{a}$ are non real and
there are infinitely many pure imaginary zeros. If we take $
g_{a}(z)=f_{a}(iz)$, then 
\begin{equation*}
\left\{ 
\begin{array}{c}
g_{a}^{\prime \prime }(z)+\left\vert \lambda \right\vert
^{2}(z+ia)(z^{2}+1)g_{a}(z)=0 \\ 
g_{a}(z)(\pm i\infty )=0
\end{array}
\right. 
\end{equation*}

By taking the changes $-1\leftrightarrow -i,1\leftrightarrow i$ in the levels sets (
\ref{level sets}), we obtain:
\begin{align*}
\Sigma _{i,0}& =\left\{ x\in 
\mathbb{C}
\setminus \left] -i\infty ,-i\right] :\Im \left( \int_{\left[ i,x\right] }
\sqrt{(z-x)(z^{2}+1)}dz\right) =0\right\} ; \\
\Sigma _{-i,0}& =\left\{ x\in 
\mathbb{C}
\setminus \left[ i,+i\infty \right[ :\Im \left( \int_{\left[ -i,x\right] }
\sqrt{(z-x)(z^{2}+1)}dz\right) =0\right\} ; \\
\Sigma _{i,\blacktriangle ,0}& =\left\{ x\in 
\mathbb{C}
\setminus \left[ -i,i\right] :\Im \left( \int_{\left[ -i,i\right] }\sqrt{
(z-x)(z^{2}+1)}dz\right) =0\right\} ;
\end{align*}
The topology of these sets is similar to subsection\ref{level sets}. Let $\Theta =
\mathcal{S}_{i,\blacktriangle ,0}:$ the part of $\Sigma _{i,\blacktriangle
,\theta }$ starting at $t_{i,0}=ix_{crit}$ and diverging to $\infty $ (see 
 Notation \ref{Notations sigma}). Let so $P_{x}(z)=\left\vert \lambda \right\vert
^{2}(z-x)(z^{2}+1)$ for $x\in \Theta $. The function $g_{ix}$ had an
infinite number of real zeros and a finite number of non real zeros. This
answers the first part of the question. 

For the second part of the answer, we
take $g_{ix_{crit}}$. In this case, the Stokes graph is a Boutroux curve with
broken finite Stokes line. The two parts of the finite Stokes line are
symmetric with respect to the real line, and the number of non real zeros is even. This finish the proof.
\end{proof}
\begin{remark}
    As it shown in figures (\ref{fig1}, \ref{fig2} and \ref{fig3}), we can construct solutions to (\ref{ODE-question}) with infinite positive or negative real zeros.  
\end{remark}

\begin{figure}[ht]
\begin{minipage}[b]{0.3\linewidth}
		\centering\includegraphics[scale=0.35]{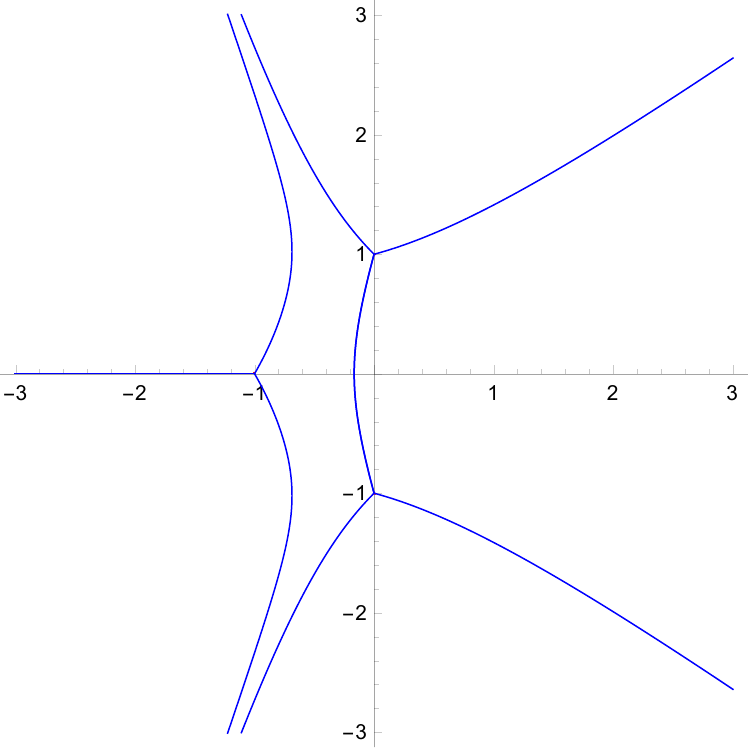}\caption{Negative real zero+finite non real zeros}\label{fig1}
\end{minipage}\hfill\begin{minipage}[b]{0.3\linewidth}
	\centering\includegraphics[scale=0.35]{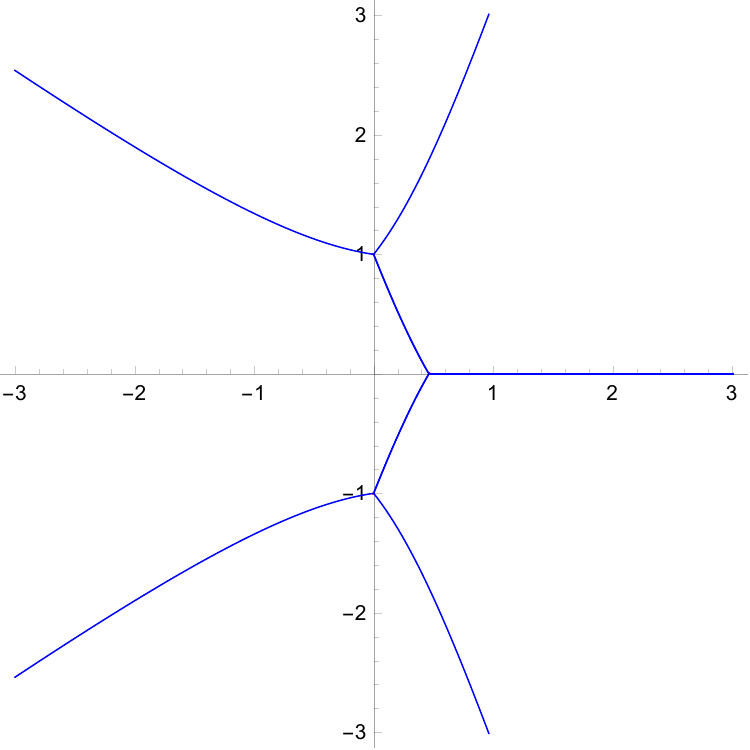}
	\caption{Positive real zero and $2n$ non real zeros}\label{fig2}
\end{minipage}\hfill\begin{minipage}[b]{0.3\linewidth}
			\centering\includegraphics[scale=0.35]{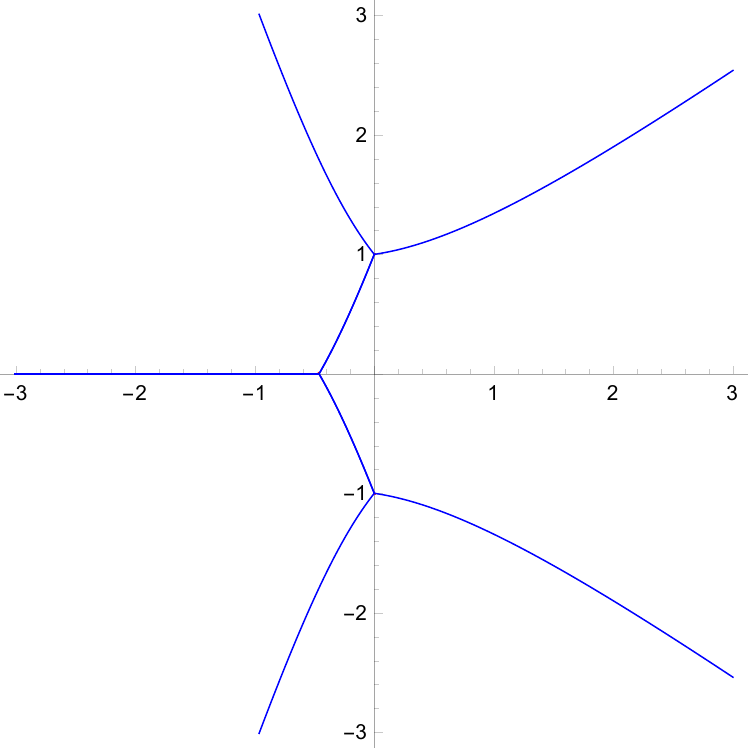}\caption{Negative real zero and 2n non real zeros}\label{fig3}
\end{minipage}\hfill\end{figure}
\begin{figure}[h]
    \centering
    \includegraphics[width=0.35\linewidth]{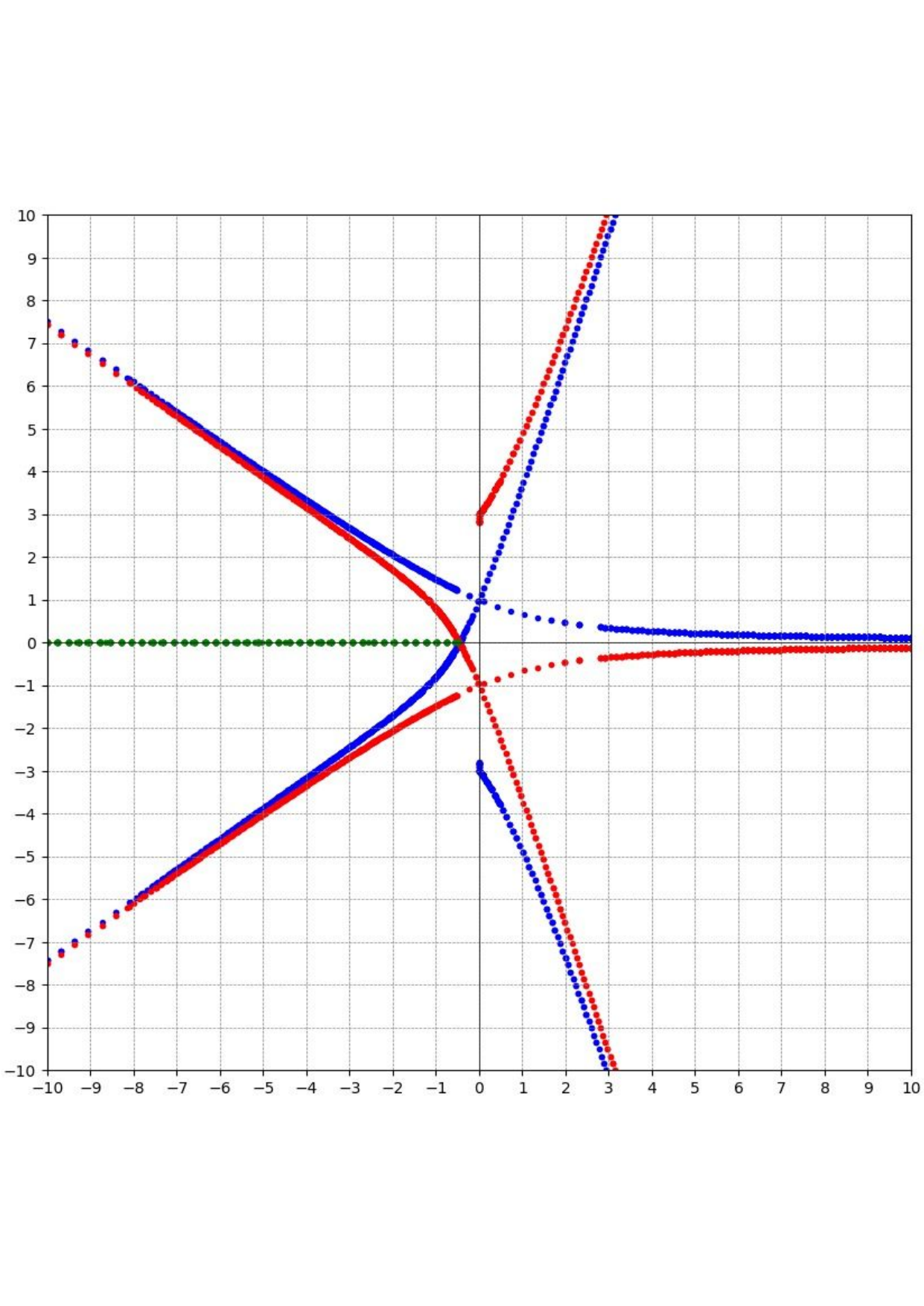}
    \caption{Exact plot of modified Sigma curves for the case $\theta=0$}
    \label{theta=0, i,-i}
\end{figure}
\newpage
\subsection{Sturm-Liouville Problem}

The second application concerns the eigenvalue problem for the Sturm-Liouville equation.
\begin{equation}
\label{eig99}
    \begin{cases}
        y''(z)+i(z^3+\alpha z)y(z)=Ey(z)\\y(-\infty)=y(+\infty)=0.
    \end{cases}
\end{equation}
It is well known (Y. Sibuya \cite{sibuya}, Shin K.C \cite{shin2}) that the eigenvalues $E_n$ of \eqref{eig99} are discrete, simple, can be arranged $\left\vert
E_{1}\right\vert <\left\vert E_{2}\right\vert <\left\vert
E_{3}\right\vert <...<\left\vert
E_{n}\right\vert...\rightarrow \infty $, real for sufficiently large $n$ and have the following asymptotic expression.
\begin{equation}\label{asymeq}
E_n=\left(\dsp\frac{2\Gamma(\frac{2}{3})\sqrt{\pi}(2n-1)}{\sqrt{2}\Gamma(\frac{1}{3})}\right)^\frac{6}{5}[1+o(1)], \text{ as }n\text{ tends to infinity, }n\in \mathbb{N}.
\end{equation}
In this subsection we present a new proof that the spectrum is real for sufficiently large $n$, and we also establish the asymptotic distribution of the zeros of the corresponding eigenfunctions in the complex plane. 
 \begin{proposition}
Consider the eigenvalue problem \begin{equation}
\label{eig(99)}
    \begin{cases}
        y''(z)+i(z^3+\alpha z)y(z)=Ey(z)\\y(-\infty)=y(+\infty)=0.
    \end{cases}
\end{equation}
where $\alpha\in \mathbb{R}$. Let {$E_n$} be the set of eigenvalues of \eqref{eig(99)}  arranged in increasing modulus: $$\left\vert
E_{1}\right\vert <\left\vert E_{2}\right\vert <\left\vert
E_{3}\right\vert <...<\left\vert
E_{n}\right\vert...\rightarrow \infty $$ Then, for sufficiently large n:
\begin{enumerate}
\item $E_n$ is real.
\item The zeros of the eigenfunction associated with $E_n$  are supported on two lines obtained by applying an affine transformation to the Stokes lines $l_0$ and $l_1$ of the quadratic differential 
\[
i(z^2 - 1)(z - i\sqrt{3})dz^2
\]
where $l_0$ is the short trajectory and $l_1$ is the one contained in the imaginary axis (see Figure \ref{FIG31}).
\end{enumerate} 
\end{proposition}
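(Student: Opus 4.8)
The plan is to reduce \eqref{eig(99)} by an affine change of variable to the normal form $-Y''+\lambda^{2}(w^{2}-1)(w-a)Y=0$ studied throughout the paper, to identify the Stokes geometry of the reduced problem in the limit $|E|\to\infty$, and then to read off the two assertions from Theorem \ref{main result1}, Corollary \ref{zeros corollary} and Proposition \ref{finite zeros}. First I would write \eqref{eig(99)} as $-y''+(E-i\alpha z-iz^{3})y=0$ and let $z_{0}(E),z_{1}(E),z_{2}(E)$ be the zeros of $E-i\alpha z-iz^{3}$ (so that $z_{0}+z_{1}+z_{2}=0$). With $\mu_{E}=\tfrac12(z_{0}-z_{2})$, $\nu_{E}=\tfrac12(z_{0}+z_{2})$ and $Y(w)=y(\mu_{E}w+\nu_{E})$, a direct computation gives
\[
-Y''(w)+\lambda_{E}^{2}(w^{2}-1)(w-a_{E})\,Y(w)=0,\qquad \lambda_{E}^{2}=-i\mu_{E}^{5},\quad a_{E}=-\frac{3\nu_{E}}{\mu_{E}},
\]
with $\pm1$ the images of $z_{0},z_{2}$ and $a_{E}$ the image of $z_{1}$. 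By Remark (\ref{boundary-cond-remark}) the conditions $y(\pm\infty)=0$ are equivalent to asking $Y$ to be subdominant in the two half-plane domains $H',H''$ that carry the directions $\arg w=0$ and $\arg w=\pi$; when the effective value of $\theta$ is $\tfrac{\pi}{4}$ these are, by \eqref{critical directions}, two of the five half-planes, namely $H_{0}$ and $H_{2}$.

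Next I would compute the limit of the reduced data. The zeros satisfy $z_{j}(E)=(-iE)^{1/3}\omega^{j}\bigl(1+O(|E|^{-2/3})\bigr)$ with $\omega=e^{2i\pi/3}$, so the perturbation $\alpha z$ moves the rescaled turning points only by $O(|E|^{-2/3})$; for $E\to+\infty$ one finds $\mu_{E}=\tfrac{\sqrt3}{2}|E|^{1/3}(1+o(1))$, whence $\lambda_{E}^{2}$ tends to a positive multiple of $-i$ and $a_{E}\to i\sqrt3$. Thus the limiting quadratic differential $\varpi_{a_{E},\lambda_{E}}$ is a positive multiple of $i(w^{2}-1)(w-i\sqrt3)\,dw^{2}$, which is the case $\theta=\tfrac{\pi}{4}$ up to the symmetries \eqref{symmetric relations}. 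This differential is invariant under $w\mapsto-\bar w$ up to complex conjugation, so its critical graph is symmetric with respect to the imaginary axis; in particular this forces $\Re\!\int_{-1}^{1}e^{i\pi/4}\sqrt{p_{i\sqrt3}(t)}\,dt=0$, i.e. $i\sqrt3\in\Sigma_{\blacktriangle,\pi/4}$, and inspection of the trajectory structure together with Theorem \ref{main theorem Thabet} and Notation \ref{Notations sigma} shows $i\sqrt3\in\mathcal{S}_{\blacktriangle,\pi/4}$: the graph $\Gamma_{i\sqrt3,\pi/4}$ has one short trajectory $l_{0}$ joining $\pm1$, one strip domain, and an infinite Stokes line $l_{1}$ issuing from $i\sqrt3$ along the imaginary axis. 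One then checks that the Stokes complex carrying $l_{0}$ joins exactly $H_{0}$ and $H_{2}$, so the reduced eigenvalue problem is of \textbf{type B}.

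With this established, Theorem \ref{main result1} yields that $\theta=\tfrac{\pi}{4}$ is an accumulation direction for the reduced problem, that its spectrum $\{\lambda_{n}\}$ accumulates on the ray $L_{r_{\varepsilon}}(\tfrac{\pi}{4})$, and that $|\lambda_{n}|=(2n-1)\pi\bigl(\oint_{C}\sqrt{|p_{i\sqrt3}(z)|}\,dz\bigr)^{-1}+O(n^{-1})$. Feeding this back through $\lambda_{E_{n}}^{2}=-i\mu_{E_{n}}^{5}$ and $E_{n}=\tfrac{2i}{27}a_{E_{n}}(a_{E_{n}}^{2}-9)\mu_{E_{n}}^{3}$, and evaluating the period integral, recovers the asymptotic formula \eqref{asymeq}; in particular $\arg E_{n}\to0$. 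Combined with the $\mathcal{PT}$-symmetry of \eqref{eig(99)} (under which the spectrum is stable by complex conjugation) and with the local unique solvability of the Bohr--Sommerfeld quantization condition behind \eqref{spectrum-asymptotic}, a conjugate pair $\{E,\bar E\}$ of equal modulus near $\mathbb{R}_{+}$ would be forced to coincide, giving $E_{n}\in\mathbb{R}$ for all large $n$; this is (1). For (2), Corollary \ref{zeros corollary} and Proposition \ref{finite zeros} applied to the reduced eigenfunction show that its zero set splits into a bounded part lying on the short trajectory $l_{0}$ — carrying exactly $n$ zeros once $n$ is large — and an unbounded part lying on the infinite Stokes line $l_{1}\subset i\mathbb{R}$; pulling these back by $w\mapsto\mu_{E_{n}}w+\nu_{E_{n}}$ produces the two lines of the statement.

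The main obstacle is the second step, i.e. controlling the reduced data: proving that $a_{E}\to i\sqrt3$ and that the effective $\theta$ is $\tfrac{\pi}{4}$, and — crucially for the reality assertion — that no other limiting value of $\arg E$ yields a Stokes geometry admitting a subdominant solution in the prescribed pair of half-planes. This requires Proposition \ref{main result2} together with the precise description of $\chi_{\theta}$ from \cite{Thabet+al} in order to track $a_{E}$ relative to $\chi_{\theta}$ as $\arg E$ varies, plus a separate check that the limiting parameter is an interior point of $\mathcal{S}_{\blacktriangle,\pi/4}$ (and not one of $t_{\pi/4},e_{\pi/4}$) and that the relevant Stokes complex really joins $H_{0}$ with $H_{2}$. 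Once that geometric picture is in place, the handling of the term $\alpha z$, the back-substitution in $E$, and the period-integral evaluation are routine.
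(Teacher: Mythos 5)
Your reduction to the paper's normal form, the identification of the limiting quadratic differential as a multiple of $i(w^{2}-1)(w-i\sqrt{3})\,dw^{2}$, and the use of Theorem \ref{main result1}, Corollary \ref{zeros corollary} and Proposition \ref{finite zeros} for part (2) follow the same route as the paper (which uses the rescaling $z=rx$, $r=|E|^{1/3}$, followed by an affine map $z=bx+c$ depending only on $\beta=\arg E$, rather than your exact root-based substitution). The genuine gap is the one you yourself flag at the end, and it is not peripheral: you compute $a_{E}\to i\sqrt{3}$ and the effective value of $\theta$ \emph{assuming} $E\to+\infty$ along $\mathbb{R}_{+}$, and then deduce $\arg E_{n}\to 0$ from the resulting asymptotics --- but the reality of $E_{n}$ is the very thing to be proven, so as written the argument for (1) is circular. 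The PT-symmetry step (a conjugate pair of equal modulus must coincide) only finishes the job once one already knows that all large eigenvalues cluster near a single ray, which again requires excluding every other limiting value of $\arg E$.

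The paper closes this gap by arranging the normalization so that the excluded directions can be ruled out uniformly in $\beta$: the map $z=bx+c$ with $b=\frac{2}{\sqrt{3}}ie^{-i(\beta-\pi/2)/3}$ sends the turning points of the limiting potential $x^{3}+ie^{i\beta}$ to exactly $\{-1,1,i\sqrt{3}\}$ for \emph{every} $\beta$, so all the dependence on $\arg E$ is pushed into $\theta=\arg\lambda$ via $\lambda^{2}=ir^{5}b^{-1}$. Lemma \ref{relation sigma} then shows that $e^{2i\theta}(z^{2}-1)(z-i\sqrt{3})\,dz^{2}$ admits a short trajectory for at most three values of $\theta$ in $[0,\pi[$ --- here $\left\{\frac{\pi}{4},\frac{7\pi}{12},\frac{11\pi}{12}\right\}$, since $i\sqrt{3}$ lies on $\Sigma_{\blacktriangle,\frac{\pi}{4}}\cap\Sigma_{1,\frac{7\pi}{12}}\cap\Sigma_{-1,\frac{11\pi}{12}}$ --- and of these only $\theta=\frac{\pi}{4}$ yields a short trajectory joining the pair of non-admissible half-planes containing $\left\{bx:\ x\in\mathbb{R},\ |x|\gg1\right\}$, which is where the boundary conditions live. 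By Theorem \ref{main result1} (and Proposition \ref{main result2} for the remaining $\theta$'s) the spectrum can therefore accumulate only along $\theta=\frac{\pi}{4}$, and solving the resulting condition on $\arg(ir^{5}b^{-1})$ for $\beta$ gives $\beta=0$ outright, with no appeal to PT-symmetry. If you retain your parametrization you must supply the analogous exclusion: show that for $\beta\neq 0$ the pair $(a_{E},\theta_{E})$ either has $a_{E}\notin\chi_{\theta_{E}}$ or has its short trajectory joining the wrong half-planes --- precisely the tracking of $a_{E}$ relative to $\chi_{\theta}$ that you deferred.
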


\begin{proof}

Let $y=y_n$ be an eigenfunction of \eqref{eig99} associated with the eigenvalue $E=E_n$. We make the change of variable:
$$
z=rx,~~\text{where}~~r=|E|^{\frac{1}{3}},
$$
where we have used the notation $\beta=arg(E)$ and $C=ir^{-3}E=ie^{i\beta}$. We see that the new function $\varphi(x)=y(rx)$ is a solution of: 
$$
-\varphi''(x)+ir^5(x^3+\alpha r^{-2}x+C)\varphi(x)=0.
$$
Moreover, since $y$ is subdominant at both $\pm \infty$, and the resealing $r$ is real, the function $\varphi$ is also exponentially small at $\pm\infty$.As $n\to \infty$, $\varphi$ satisfy the eigenvalue problem of the form:
$$
\begin{cases}
    -\varphi''(x)+ir^5(x^3+C)\varphi(x)=0\\
    \varphi(\pm \infty)=0.
\end{cases}
$$
Let $P(x)=x^3+ie^{i\beta}$. Using the change of variable $z=bx+c$ where $b=\frac{2}{\sqrt{3}}ie^{-i\frac{\beta-\frac{\pi}{2}}{3}}$ and $c=1-\frac{2}{\sqrt{3}}ie^{-i\frac{2\pi}{3}}$, we define $Q(z)=b^{3}P(x)=(z^2-1)(z-i\sqrt{3})$ and $\psi(z)=\varphi(x)$.\\
Then $\psi$ satisfies the eigenvalue problem of the form:
\begin{equation}\label{eig100}
\begin{cases}
    -\psi''(z)+\lambda^2 (z^2-1)(z-i\sqrt{3})\psi(z)=0,~~\text{where} ~\lambda^2=ir^5b^{-1}\\ \underset{\underset{x\in\mathbb{R}}{|x|\to+\infty}}{\lim}\psi(bx)=0
    \end{cases}    
\end{equation}

Let $\theta=arg(\lambda)\in[0,\pi]$. We observe from some Stokes graphs of the quadratic differential $\varpi_\theta=\lambda^2 Q(z) dz^2$ (see Figure  \ref{FIG31},  \ref{FIG.32} and  \ref{FIG33}) we have 
 $i\sqrt{3}\in S_{\frac{\pi}{4}}\cap S_{1,\frac{7\pi}{12}} \cap S_{-1,\frac{11\pi}{12}}$. As seen in Lemma  \ref{relation sigma} the existence of a short trajectory  connecting two turning points of $\varpi_\theta$ is only possible for at most three values of $\theta\in[0,\pi[$ , then $\theta\in \{\frac{\pi}{4},\frac{7\pi}{12},\frac{11\pi}{12}\}$.
\begin{figure}[ht]
\begin{minipage}[b]{0.3\linewidth}
		\centering\includegraphics[scale=0.3]{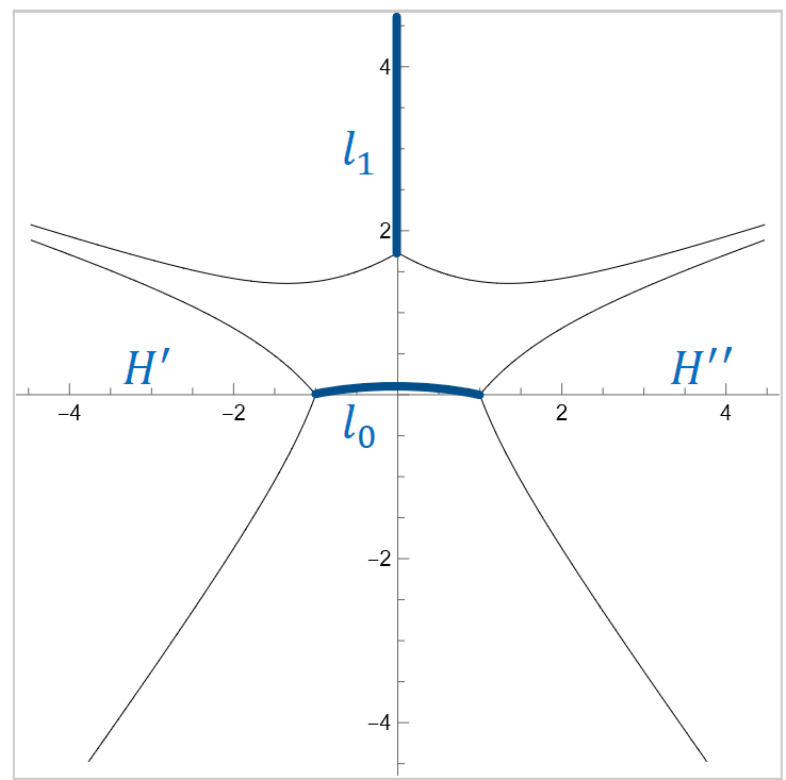}
		\caption{$\theta=\frac{\pi}{4}$}\label{FIG31}
\end{minipage}\hfill\begin{minipage}[b]{0.3\linewidth}
	\centering\includegraphics[scale=0.38]{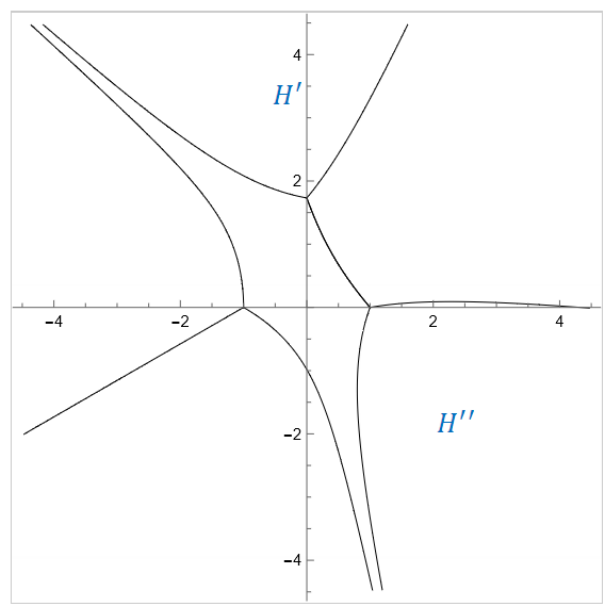}
	\caption{$\theta=\frac{7\pi}{12}$}\label{FIG.32}
\end{minipage}\hfill\begin{minipage}[b]{0.3\linewidth}
			\centering\includegraphics[scale=0.45]{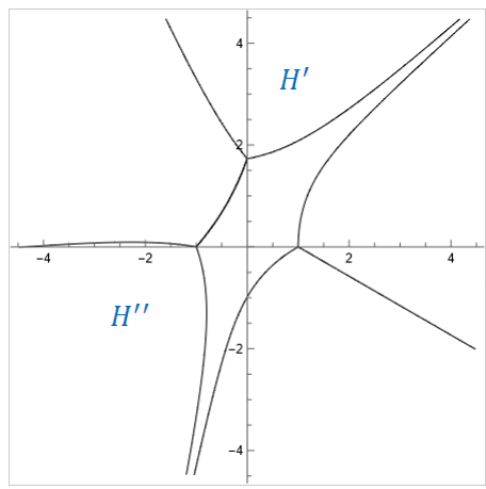}
			\caption{$\theta=\frac{11\pi}{12}$}\label{FIG33}
\end{minipage}\hfill\end{figure}

Now let $\theta\in \{\frac{\pi}{4},\frac{7\pi}{12},\frac{11\pi}{12}\}$, $H'$ and $H''$ represent the two non admissible half-plan connected by the short trajectory. Since $\{bx,~~x\in\mathbb{R},|x|>>1\}\subset H'\cup H''$ only for $\theta=\frac{\pi}{4}$, from Theorem  \ref{main result1} we deduce that the spectrum is discrete, accumulates in the direction $\frac{\pi}{4}$ and has the following asymptotic expression.
\begin{equation}\label{asymeq2}
\left\vert \lambda \right\vert \underset{\underset{n\in \mathbb{N}}{n\to \infty}}{=}(2n-1)\pi \left( \oint\limits_{C}\sqrt{
\left\vert Q(z)\right\vert }dz\right) ^{-1}[1+o(1)], \text{ where }  C \text{ is a simple contour encircling }l_0.
\end{equation}
We also note that $arg(\lambda^2) =\frac{\pi}{2}$ implies $a \in \mathbb{R}$, which leads to $\beta - \frac{\pi}{2} = -\frac{\pi}{2}$  and thus $\beta = 0$. Consequently, $E$ is real. \\
Moreover, as seen in Section  \ref{zeros section}, the zeros of the eigenfunctions $\psi$ are accumulated on the Stokes lines $l_0$ and $l_1$ (see Figure \ref{FIG31}). Consequently, the zeros of the eigenfunctions $y$ are obtained by applying the affine transformation $\frac{r}{b}(x - c)$ to the zeros of $\psi$.
    
\end{proof}
\begin{remark}
    Substituting  $|\lambda|=|E|^{\frac{5}{6}}.|b^{-1}|$ into \eqref{asymeq} and \eqref{asymeq2}, we obtain: $$\oint\limits_{C}\sqrt{
\left\vert Q(z)\right\vert }dz=\frac{\sqrt{2\pi}\Gamma(\frac{1}{3})}{\sqrt3 \Gamma(\frac{1}{2})}$$
This gives the value of the Abelian integral:
$$\oint\limits_{C}\sqrt{
 (z^2-1)(z-i\sqrt{3})}dz=\pm ie^{i\frac{\pi}{4}}\dsp\frac{\sqrt{2\pi}\Gamma(\frac{1}{3})}{\sqrt3 \Gamma(\frac{1}{2})}=\dsp\pm ie^{i\frac{\pi}{4}}\sqrt\frac{{2}}{3}\Gamma(\frac{1}{3}) $$
\end{remark}

\section{Proofs}\label{proofs}

The next Lemma describes the topology of $\left( \Sigma _{\theta }\right)
_{\theta \in \lbrack 0,\frac{\pi }{2}[}$ defined in the subsection  \ref{level sets}:

\begin{lemma}
\label{relation sigma}

\begin{enumerate}
\item If $\alpha \neq \theta $ with $\alpha \in \lbrack 0,\frac{\pi }{2}[$ then: 
\begin{equation*}
\begin{array}{c}
\dsp\Sigma _{-1,\theta }\cap \Sigma _{-1,\alpha }=\left\{ -1\right\} \\ 
\dsp\Sigma _{1,\theta }\cap \Sigma _{1,\alpha }=\left\{ 1\right\} \\ 
\dsp\Sigma _{\blacktriangle ,\theta }\cap \Sigma _{\blacktriangle ,\alpha
}=\varnothing
\end{array}
\end{equation*}

\item Let $\left( \theta _{n}\right) $ be a sequence in $[0,\frac{\pi }{2}[$
and $\left( a_{n}\right) $ a sequence of complexes numbers. Suppose that $
\theta _{n}\rightarrow \theta $ and $a_{n}\rightarrow a$. If it exist $n_{0}$ such that $a_{n}\in \Sigma _{\theta _{n}}$ for all $n\geq n_{0}$, so $a\in
\Sigma _{\theta }$.

\item Let $a\in \overset{\bullet }{
\mathbb{C}}$. If $a\notin \Sigma _{\theta }$, so there exists $\delta >0$, such that $
a\notin \Sigma _{\alpha }$ for all $\alpha \in \Lambda ^{\delta }(\theta
)=\left\{ \alpha \in \lbrack 0,\frac{\pi }{2}[;\,\,\left\vert \theta
-\alpha \right\vert \leq \delta \right\} $
\end{enumerate}
\end{lemma}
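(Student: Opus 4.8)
The plan is to prove the three assertions of Lemma~\ref{relation sigma} by exploiting the explicit integral definitions of the level sets $\Sigma_{\pm 1,\theta}$ and $\Sigma_{\blacktriangle,\theta}$ together with the structural description recalled in Subsection~\ref{level sets} (from \cite[Lemma 1 and Proposition 2]{Thabet+al}). For item (1), suppose $a\in\Sigma_{1,\theta}\cap\Sigma_{1,\alpha}$ with $a\neq 1$. By definition this means $\Re\!\left(e^{i\theta}\int_{[1,a]}\sqrt{p_a(z)}\,dz\right)=\Re\!\left(e^{i\alpha}\int_{[1,a]}\sqrt{p_a(z)}\,dz\right)=0$. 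Writing $I=\int_{[1,a]}\sqrt{p_a(z)}\,dz\in\mathbb{C}$, the two conditions say that the nonzero complex number $I$ is orthogonal to both $e^{i\theta}$ and $e^{i\alpha}$; since $\theta\neq\alpha$ in $[0,\pi/2[$, the directions $e^{i\theta}$ and $e^{i\alpha}$ are not collinear, so the only vector orthogonal to both is $I=0$. But $\int_{[1,a]}\sqrt{p_a(z)}\,dz=0$ forces $a=1$ (the integrand does not vanish identically on a nondegenerate segment and, since $\sqrt{p_a}$ has a definite sign pattern near $z=1$, the integral over $[1,a]$ cannot vanish for $a\neq 1$; more carefully, one uses that $\int_1^a\sqrt{(t-1)(t+1)(t-a)}\,dt$, after the substitution $t=1+(a-1)s$, equals $(a-1)^{5/2}$ times a nonzero constant integral). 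This contradiction proves $\Sigma_{1,\theta}\cap\Sigma_{1,\alpha}=\{1\}$; the argument for $\Sigma_{-1,\theta}\cap\Sigma_{-1,\alpha}=\{-1\}$ is identical, and for $\Sigma_{\blacktriangle,\theta}$ the relevant integral $\int_{[-1,1]}\sqrt{p_a(z)}\,dz$ never vanishes for $a\in\mathbb{C}\setminus[-1,1]$ (it is, up to a nonzero constant, a complete elliptic-type integral that is nonzero), so the intersection is empty.

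For item (2), the idea is a closedness / continuity argument. The map $(\theta,a)\mapsto \Re\!\left(e^{i\theta}\int_{\gamma}\sqrt{p_a(z)}\,dz\right)$ is continuous in $(\theta,a)$ on the relevant slit domains, for each of the three fixed integration paths $\gamma\in\{[1,a],[-1,a],[-1,1]\}$ (with a consistent choice of branch of the square root, which can be made locally and which varies continuously). If $a_n\in\Sigma_{\theta_n}$ for all $n\geq n_0$, then for each $n$ at least one of the three defining real quantities vanishes at $(\theta_n,a_n)$; passing to a subsequence we may assume it is always the \emph{same} one of the three (pigeonhole), and then continuity gives that the corresponding quantity vanishes at the limit $(\theta,a)$, hence $a\in\Sigma_\theta$. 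One subtlety to handle: one must make sure the limit point $a$ still lies in the appropriate slit domain (e.g. $a\notin\,]-\infty,-1]$ for the $\Sigma_{1}$ piece); this is where the detailed geometry of $\Sigma_\theta$ from \cite{Thabet+al} (the rays diverging to $\infty$ in prescribed directions, the location of the connecting points $s_{\cdot,\theta}$) is invoked to rule out escape of $a$ to the slit or to infinity, or else one argues directly that if $a$ lands on a slit the corresponding integral still makes sense as a boundary value and the vanishing persists.

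Item (3) is then essentially the contrapositive/complement of item (2), upgraded to a uniform (open) statement. Suppose for contradiction that $a\notin\Sigma_\theta$ but there is a sequence $\alpha_n\to\theta$ with $a\in\Sigma_{\alpha_n}$ for all $n$. Applying item (2) with the constant sequence $a_n=a$ and $\theta_n=\alpha_n$ yields $a\in\Sigma_\theta$, a contradiction; hence there is $\delta>0$ with $a\notin\Sigma_\alpha$ for all $\alpha\in\Lambda^\delta(\theta)$. (Here $\Lambda^\delta(\theta)=\{\alpha\in[0,\tfrac{\pi}{2}[\,;\,|\theta-\alpha|\leq\delta\}$ as in the statement.) The main obstacle, as usual with these trajectory-structure lemmas, is not the soft topology but the bookkeeping in item (2): ensuring a \emph{uniform} choice of branch of $\sqrt{p_a}$ along the moving segments as $(\theta_n,a_n)$ varies, and controlling the behaviour of $a$ near the branch cuts $]-\infty,-1]$, $[1,+\infty[$ and $[-1,1]$ so that the defining integrals remain continuous up to the limit. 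I would dispatch this either by working on a double cover where $\sqrt{p_a}$ is single-valued, or by replacing the straight segments $[z_0,a]$ by homotopic paths avoiding the cuts and using that the real parts in the definitions are homotopy-invariant modulo the periods recalled in the $\mathcal{D}/\mathcal{SG}$ correspondence.
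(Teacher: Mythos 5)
Your overall strategy matches the paper's for items (1) and (2): item (1) rests on the observation that a nonzero complex number $Z$ cannot satisfy $\Re (e^{i\theta }Z)=\Re (e^{i\alpha }Z)=0$ for two distinct $\theta ,\alpha \in \lbrack 0,\pi /2[$, and item (2) is the continuity of $(\theta ,a)\mapsto \Re \bigl(\int e^{i\theta }\sqrt{p_{a}}\bigr)$ together with a pigeonhole over the three defining conditions. Your item (3), however, takes a genuinely different and softer route: you deduce it from item (2) applied to the constant sequence $a_{n}=a$, i.e.\ you observe that $\{\alpha :a\in \Sigma _{\alpha }\}$ is closed and conclude by openness of its complement. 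This does prove the lemma as literally stated. The paper instead argues quantitatively: on a compact $K\subset \mathbb{C}\setminus \Sigma _{\theta }$ (contained in a single component $\Omega _{i_{0}}$) the relevant real part is bounded below by some $A_{K}>0$, hence $\left\vert \arg \bigl(\int_{1}^{a}e^{i\theta }\sqrt{p_{a}}\bigr)\right\vert \leq \pi /2-2\delta _{K}$, and rotating by $\alpha $ with $\left\vert \alpha -\theta \right\vert \leq \delta _{K}$ keeps this argument strictly inside $(-\pi /2,\pi /2)$. The payoff of the paper's version is a $\delta _{K}$ uniform over compact sets, which is exactly what is invoked later in the proof of Proposition \ref{main result2}; your argument yields only a pointwise $\delta =\delta (a)$ and would need an additional compactness step to recover that uniformity.

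One concrete flaw in your item (1): the justification that $\int_{1}^{a}\sqrt{p_{a}}\neq 0$ for $a\neq 1$ via the substitution $t=1+(a-1)s$ is incorrect. Only the root at $a$ moves while $\pm 1$ stay fixed, so the substitution gives $(a-1)^{2}\int_{0}^{1}\sqrt{s(s-1)((a-1)s+2)}\,ds$, where the remaining integral still depends on $a$ --- it is not $(a-1)^{5/2}$ times a universal nonzero constant. The non-vanishing of $\int_{1}^{a}\sqrt{p_{a}}$ for $a\neq 1$ (and of $\int_{-1}^{1}\sqrt{p_{a}}$ for $a\notin \lbrack -1,1]$, which is what makes the third intersection empty rather than a single point) is a genuine input; the paper takes it implicitly from the structural description of the level sets in \cite{Thabet+al}, and you should cite that rather than the scaling computation. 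Your cautions in item (2) about branch choices and the slit domains are well taken (and more careful than the paper's one-line continuity claim), but note that the condition $\Re (\cdot )=0$ is insensitive to the choice of branch, since the two branches differ by a sign; this removes most of the bookkeeping you worry about.
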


\begin{proof}
To prove the first part, it is sufficient to note from the
definition of $\Sigma _{1,\theta }$ such that there exists a unique
value of $\theta $ in $[0,\frac{\pi }{2}[$ such that the real part of the product of the two complex number $\exp (i\theta )$ and $Z$ is zero, where $Z=\dsp \int_{1}^{a}\sqrt{p_{a}\left( z\right) }dz$ and hence $\dsp \Sigma _{1,\theta }\cap \Sigma_{1,\alpha }=\left\{ 1\right\} $. Similarly, $\Sigma _{-1,\theta }\cap\Sigma _{-1,\alpha }=\left\{ -1\right\} $. Finally, $\Sigma _{\blacktriangle,\theta }$ originates from the point $s_{\theta }$ on the real axis and passes through $\Sigma _{1,\theta }\cap \Sigma _{-1,\theta }\subseteq \left\{t_{\theta },e_{\theta }\right\} $. By the previous argument, it is easy to
deduce that $\Sigma _{\blacktriangle ,\alpha }$ and $\Sigma _{\blacktriangle,\theta }$ have no intersection.\\
For the second part, it is sufficient to note that $\underset{
n\rightarrow \infty }{\lim }\Re \left( \int_{1}^{a_{n}}e^{i\theta _{n}}\sqrt{
p_{a_{n}}\left( z\right) }dz\right) =\Re \left( \int_{1}^{a}e^{i\theta }
\sqrt{p_{a}\left( z\right) }dz\right) $.\\
Finally, $\Sigma _{\theta }$ divides the complex plane $\mathbb{C}$ into $n_{0}$ simply connected domains $\left( \Omega _{i}\right) _{1\leq
i\leq n_{0}}$. If $K$ is a compact subset of $\mathbb{C}
\backslash \Sigma _{\theta }$, then $K\subset \Omega _{i_{0}}$ for some $i_{0}\in \left\{ 1,...,n_{0}\right\}$. The function 
\begin{eqnarray*}
\Omega _{i_{0}} &\longrightarrow &\mathbb{R}\\
a &\longmapsto &\Re \left( \int_{1}^{a}e^{i\theta }\sqrt{p_{a}\left(
z\right) }dz\right) 
\end{eqnarray*}
is continuous according to the previous part of this lemma and has a constant sign. Suppose that for $a\in K$, we have $\Re \left( \int_{1}^{a}e^{i\theta }\sqrt{
p_{a}\left( z\right) }dz\right) >0$. For a suitable branch of the square
root, we deduce that there exist $\delta _{K},A_{K}>0$ such that $\Re \left(
\int_{1}^{a}e^{i\theta }\sqrt{p_{a}\left( z\right) }dz\right) \geq A_{K}>0$
and so $\left\vert \arg (\int_{1}^{a}e^{i\theta }\sqrt{p_{a}\left( z\right) }dz)\right\vert \leq \frac{\pi }{2}-2\delta _{K}$ for all $a\in K$ ($K$ compact).\\
Let $\Lambda ^{\delta _{K}}(\theta )=\left\{ \alpha \in \lbrack 0,
\frac{\pi }{2}[\text{; }\left\vert \theta -\alpha \right\vert \leq \delta
_{K}\right\}$, for $\alpha \in \Lambda ^{\delta _{K}}(\theta )$ we have $
\frac{-\pi }{2}<\frac{-\pi }{2}+\delta _{K}\leq \alpha +\arg (\int_{1}^{a}\sqrt{p_{a}\left( z\right) }dz)\leq \frac{\pi }{2}-\delta _{K}<\frac{\pi }{2}$ which proves that $a\notin \Sigma _{\alpha }$. By the some way we prove the case $\Re \left( \int_{1}^{a}e^{i\theta }\sqrt{p_{a}\left( z\right) }dz\right) <0$. This completes the proof of the lemma.
\end{proof}

\begin{proof}[Proof of (\protect \ref{main result2})]
Fix $\varepsilon >0$. Let $K$ a compact subset of $\mathbb{C}\backslash \Sigma _{\theta }$. For $a\in K$, all the five half planes are
admissible. By Lemma (\ref{relation sigma}), there exists $\delta _{K}>0$ such that $a\notin $ $\Sigma _{\alpha }$ for all $\alpha \in \Lambda
^{\delta _{K}}(\theta )=\left\{ \alpha \in \lbrack 0,\frac{\pi }{2}[;\,\,
\left\vert \theta -\alpha \right\vert \leq \delta _{K}\right\}$.\\
We denote $\Lambda _{a,\varepsilon }(\theta )=\left\{ \lambda \in \mathbb{C}^{\ast };\,\,\left\vert \arg \lambda -\theta \right\vert \leq \delta_{K};\,\,\left\vert \lambda \right\vert \geq r_{\varepsilon }\right\}$. It is obvious so that all half planes still admissible for all $\lambda \in \Lambda _{a,\varepsilon }(\theta )$.\\
Let $H_{-1}$ be a half-plane domain such that $\partial H_{-1}$ contains the turning point $-1$.\\
From $-1$ emanate three Stokes lines $l_{-1}^{0},l_{-1}^{1}$ and $l_{-1}^{2}$. Let us make a cut along one of these three lines, says $l_{-1}^{2}$, and remove an $\varepsilon $ neighborhood of it, such that $\Re(h(\lambda ,-1,z))>0$. Formula \eqref{uniform asymp} is then applicable for fixed $z$ in $H_{-1}^{\varepsilon }$ and $\lambda \rightarrow \infty $ in $\Lambda _{a,\varepsilon }(\theta )$ (or for $\lambda$ fixed and $z\rightarrow \infty$ in $H_{-1}^{\varepsilon}$). $H_{-1}$ borders a band domain of type  $B_{0}$ such that $\partial B_{0}$ contains $l_{-1}^{0}$, $l_{-1}^{2}$ and another turning point, says $a$. Any point in $B_{0}$ can be joined to any point of $H_{-1}^{\varepsilon }$ by an admissible curve and so \eqref{uniform asymp} is applicable in $H_{-1}^{\varepsilon }\cup B_{0}$.\\
From $1$ emanate three Stokes lines $l_{1}^{0}$, $l_{1}^{1}$ and $l_{1}^{2}$ such that $l_{1}^{0}$, $l_{1}^{1}\in \partial B_{0}$. Let us make a cut along $l_{1}^{2}$ and remove an $\varepsilon $ neighborhood of it. $B_{0}$ borders a band domain of type $B_{1} $ such that $\partial B_{1}$ contains $l_{0}^{0}$, $l_{1}^{2}$ and the third turning point $a$. Any point in $B_{1}$ can be joined with any point in $H_{-1}^{\varepsilon }\cup B_{0}^{\varepsilon }$ by an admissible curve and so \eqref{uniform asymp} is applicable in $H_{-1}^{\varepsilon }\cup B_{0}^{\varepsilon }\cup B_{1}$.\\
From $a$ emanate three Stokes lines $l_{a}^{0}$, $l_{a}^{1}$ and $l_{a}^{2}$ such that $l_{a}^{0}$, $l_{a}^{1}\in \partial B_{1}$. The half plane $H_{a}$, delimited for example by $l_{a}^{1}$ and $l_{a}^{2}$, is admissible to $H_{-1}$ and so \eqref{uniform asymp} is
then applicable in $H_{-1}^{\varepsilon }\cup B_{0}^{\varepsilon }\cup
B_{1}^{\varepsilon }\cup H_{a}$, where $B_{1}^{\varepsilon }$ is obtained
from $B_{1}$ by removing an $\varepsilon $ neighborhood of $l_{1}^{2}$ (
see Figure  \ref{fig:enter-label}).\\
In the same way, we extend the applicability of \eqref{uniform asymp} to the whole complex plane from which neighborhoods of some Stokes lines have been removed. This achieve the proof.
\begin{figure}[th]
    \centering
    \includegraphics[width=0.35\linewidth]{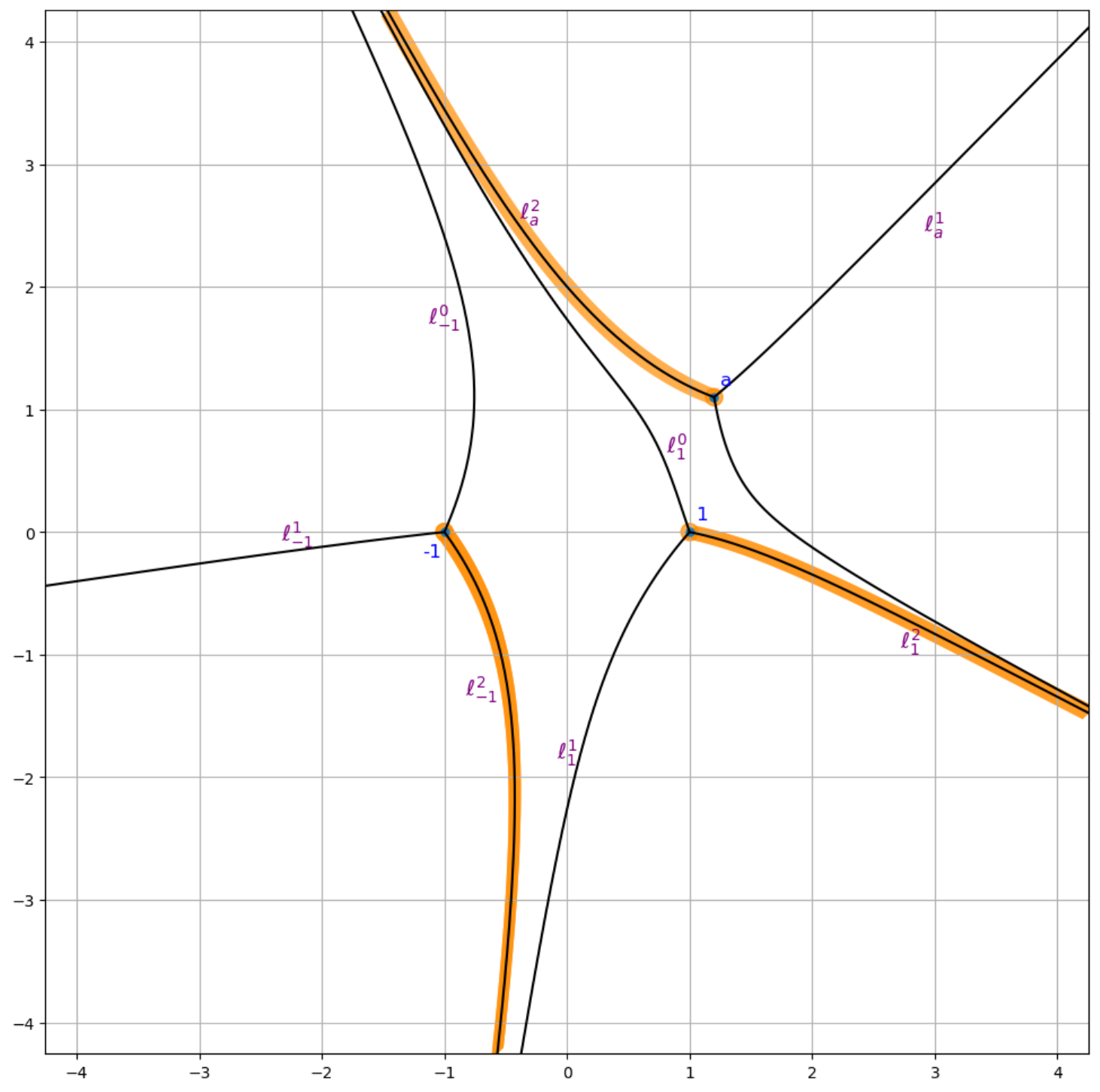}
    \caption{Stokes graph with cuts for $a\in\mathbb{C}\setminus{\mathcal{X}_\theta}$}
\end{figure}
\end{proof}
\begin{lemma}\label{lm1}
    Let $a,b\in \mathbb{C}$, $a\neq b$, and let $C$ be a simple closed curve encircling the line segment  $l_{a,b}$ and oriented as in Figure  \ref{contour} then $$\oint_C {\sqrt{(z-a)(z-b)}dz}=2\int_a^b{\sqrt{(z-a)(z-b)}dz} $$
    \begin{figure}[ht] 
  \centering
  \includegraphics[width=0.5\textwidth]{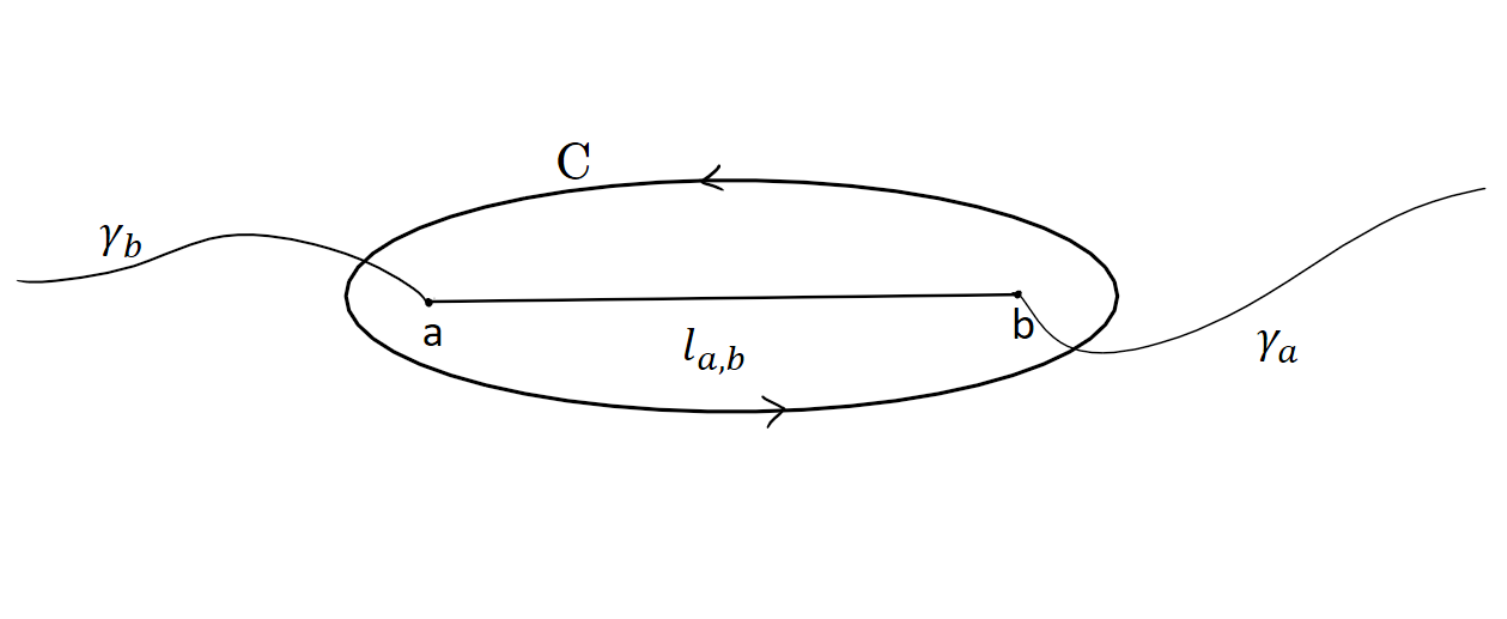}
  \caption{} \label{contour}
\end{figure}
\end{lemma}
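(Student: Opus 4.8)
The plan is to compute the contour integral by collapsing the simple closed curve $C$ onto the slit $l_{a,b}$, using that $\sqrt{(z-a)(z-b)}$ is holomorphic and single-valued on $\widehat{\mathbb{C}}\setminus l_{a,b}$. First I would fix the branch of $f(z)=\sqrt{(z-a)(z-b)}$ on $\mathbb{C}\setminus l_{a,b}$; this is legitimate since the two branch points $a$ and $b$ are joined by the cut $l_{a,b}$, so going once around $C$ (which encircles both) brings $f$ back to its original value. Hence $f$ is holomorphic in the doubly connected region between $C$ and a thin dumbbell-shaped contour $C_\delta$ tightly surrounding $l_{a,b}$, and by Cauchy's theorem $\oint_C f\,dz=\oint_{C_\delta} f\,dz$, with $C_\delta$ oriented compatibly with $C$.

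Next I would parametrize $C_\delta$ as $\delta\to 0$: it consists of two nearly straight pieces running alongside $[a,b]$ (one just above, one just below, traversed in opposite senses) together with two small circular arcs of radius $\delta$ around $a$ and around $b$. Because $f(z)=O(|z-a|^{1/2})$ near $a$ and $O(|z-b|^{1/2})$ near $b$, the contributions of the two small arcs are $O(\delta^{1/2})\to 0$. On the two straight pieces, the boundary values of $f$ from the two sides of the cut differ by a sign: if $f_+$ denotes the limit from one side, then $f_-=-f_+$ on $l_{a,b}$. Traversing the two edges in opposite directions therefore produces, in the limit, $\int_a^b f_+\,dz-\int_b^a f_-\,dz=\int_a^b f_+\,dz+\int_a^b f_+\,dz=2\int_a^b f_+\,dz$. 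With the orientation of $C$ as drawn in Figure~\ref{contour} chosen so that the sign works out, this gives exactly $\oint_C\sqrt{(z-a)(z-b)}\,dz=2\int_a^b\sqrt{(z-a)(z-b)}\,dz$, where the integral on the right is taken with the corresponding branch.

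The main point requiring care — and the step I expect to be the real obstacle — is the bookkeeping of signs and branches: one must check that the branch of the square root implicitly used in $\int_a^b\sqrt{(z-a)(z-b)}\,dz$ on the right-hand side matches the boundary value $f_+$ produced by collapsing $C$ with the stated orientation, and that this orientation is indeed the one in Figure~\ref{contour}. A clean way to pin this down is to test the identity on the normalized model $a=-1$, $b=1$, where $\sqrt{(z-1)(z+1)}$ can be expanded at $\infty$ as $z\sqrt{1-z^{-2}}=z-\tfrac{1}{2}z^{-1}+\cdots$; then $\oint_C\sqrt{z^2-1}\,dz$ equals $2\pi i$ times the residue at $\infty$, which is $-2\pi i\cdot(-\tfrac12)=\pi i$, while a direct substitution $z=\cos t$ gives $\int_{-1}^1\sqrt{z^2-1}\,dz=\int_{-1}^1 i\sqrt{1-z^2}\,dz=i\,\pi/2$, so the ratio is indeed $2$, fixing all signs. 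The general case then follows either by the affine change of variable reducing $(z-a)(z-b)$ to the normalized form, or by repeating the residue-at-infinity computation directly, noting $\sqrt{(z-a)(z-b)}=z-\tfrac{a+b}{2}+O(z^{-1})$ so that the residue at $\infty$ is again independent of $a,b$ in a way consistent with the claimed factor.
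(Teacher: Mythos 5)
Your proof is correct and follows essentially the same route as the paper: both arguments hinge on the fact that the two branches of $\sqrt{(z-a)(z-b)}$ differ by a sign across the cut, and both deform $C$ so that the two contributions along the segment add up to give the factor $2$ (the paper splits $C$ by auxiliary cuts $\gamma_a,\gamma_b$ to infinity, while you collapse $C$ onto a dumbbell contour around the slit; these are two presentations of the same computation). Your additional residue-at-infinity check on the model case $a=-1$, $b=1$ is a sensible way to pin down the sign conventions that the paper leaves implicit in Figure~\ref{contour}.
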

\begin{proof}
Let $\gamma_a$ and $\gamma_b$ be two Jordan curves going, respectively, from $a$ to infinity and from $b$ to infinity as shown in Figure \ref{contour}. The function  $\sqrt{(z-a)(z-b)}$ has two holomorphic branches in $\mathbb{C}\setminus (\gamma_a\cup \gamma_b)$, denoted by $f_+$ and $f_-$, where $f_+=-f_-$.\\
Now suppose $f$ be a holomorphic branch of $\sqrt{(z-a)(z-b)}$ in $\mathbb{C}\setminus l_{a,b}$. If $f$ coincides with $f_+$ in one connected component $\mathbb{C}\setminus (\gamma_a\cup \gamma_b\cup l_{a,b})$, then $f$ must coincide with $f_-$ in the other component. Let $x_a$ and $x_b$ represent the intersection points of the contour $C$ with the curves $\gamma_a$ and $\gamma_b$ respectively. Then $$\oint_C {fdz}=\int_{x_a}^{x_b}f_+dz+\int_{x_b}^{x_a}{f_-}dz=2\int_{x_a}^{x_b}f_+dz=2\int_{a}^{b}{f}dz$$

\end{proof}

\begin{proof}[Proof of (\protect \ref{main result1})]
As it was shown in Theorem \ref{main theorem Thabet}, the topology of $\mathbb{C}\diagdown \Sigma _{\theta }$ is invariant for all $\theta \in ]0,\frac{\pi }{2}[$, so for simplicity we will focus on the proof for $\theta =\frac{\pi }{4}$. By the subsection  \ref{sub2}, the equation \eqref{cubic equation} has unique (to within constant multiple) solutions $y_{1}(z,\lambda )$ and  $y_2(z,\lambda )$ which are subdominant in $H'$ and $H''$, respectively. If $\lambda$ is an eigenvalue, then $y_{1}(z,\lambda )=c~y_2(z,\lambda )$.
\begin{enumerate}
\item\label{11}  Let $a\in \mathcal{\chi }_{\frac{\pi }{4} }\setminus \left\{t_{\frac{\pi }{4}}\right\}$ and the Stokes graph be of \textbf{type B}. Without loss of generality, we can suppose that the Stokes graph has a short trajectory connecting $a$ and $1$. In this case, there exists a unique pair of non admissible domains  $(H',H'')$ where $1\in\partial H'$ and $a\in \partial H''$ (see the Stokes graph in Figure  \ref{MDA}) and the eigenvalue problem is given by: 
\begin{equation}
    \begin{cases}\label{eig0}
        -y^{\prime \prime }(z)+\lambda ^{2}(z-a)(z^{2}-1)y(z)=0\\ \lim_{|z|\to +\infty}y(z,\lambda)=0,~~z\in H'\cup H''.
    \end{cases}
\end{equation}  
\begin{itemize}
\item \label{r1}
Let $\varepsilon>0$. Let the branch of $(\sqrt{p_a(z)})_1$ in ${H'}$ be chosen such that $\Re{h(\lambda,a,z)}>0$, and let $\gamma^+$ be an anti-Stokes line in $H'_{\varepsilon}$. Then the solution $y_{1}$ has the asymptotic expansion:
\begin{equation}\label{1}
y_{1}(z,\lambda)=(p_a(z))_1^{-\frac{1}{4}}(e^{-h(\lambda,a,z)}(1+O({\lambda}^{-1})) 
\end{equation}
as  $|\lambda|\to +\infty $, $z$ uniformly in $\gamma^+$. The  domain $D^1_{H',\varepsilon}$ of applicability of \eqref{1}  is obtained by removing an $\varepsilon-$neighborhood of the Stokes lines $l_a^0$, $l_a^1$ and $l_a^2$ and the domain $H''$ lying to the left of $l_a^1$ and $l_a^2$ (see Figure \ref{MDA}). Indeed, for all $z\in D^1_{H',\varepsilon}$ there exists an admissible curve $\gamma$ such that $\gamma(0)=z$ and $\gamma^+\subset\gamma$.
\item The solution $y_{2}$ is defined with the asymptotic expansion:
\begin{equation}\label{2}
y_{2}(z,\lambda)=(p_a(z))_{2}^{-\frac{1}{4}}(e^{h(\lambda,1,z)}(1+O({\lambda}^{-1})) 
\end{equation}
$|\lambda|\to +\infty $, $z$ uniformly in $H_{2,\varepsilon}$, the branch of square root is chosen such that $\Re{h(\lambda,1,z)}<0$, with the same reason in  \ref{r1} the maximal domain $D^2_{H'',\varepsilon}$ applicability of the asymptotic expansion \eqref{2} is given by removing $\varepsilon-$ neighborhood of the cuts $l^0_1,l^1_1,l_1^2$ and the half plane domain $H'$  (see Figure  \ref{MDA-}). 

\begin{figure}[h] 
\begin{minipage}[b]{0.5\linewidth}\centering\includegraphics[width=0.8\textwidth]{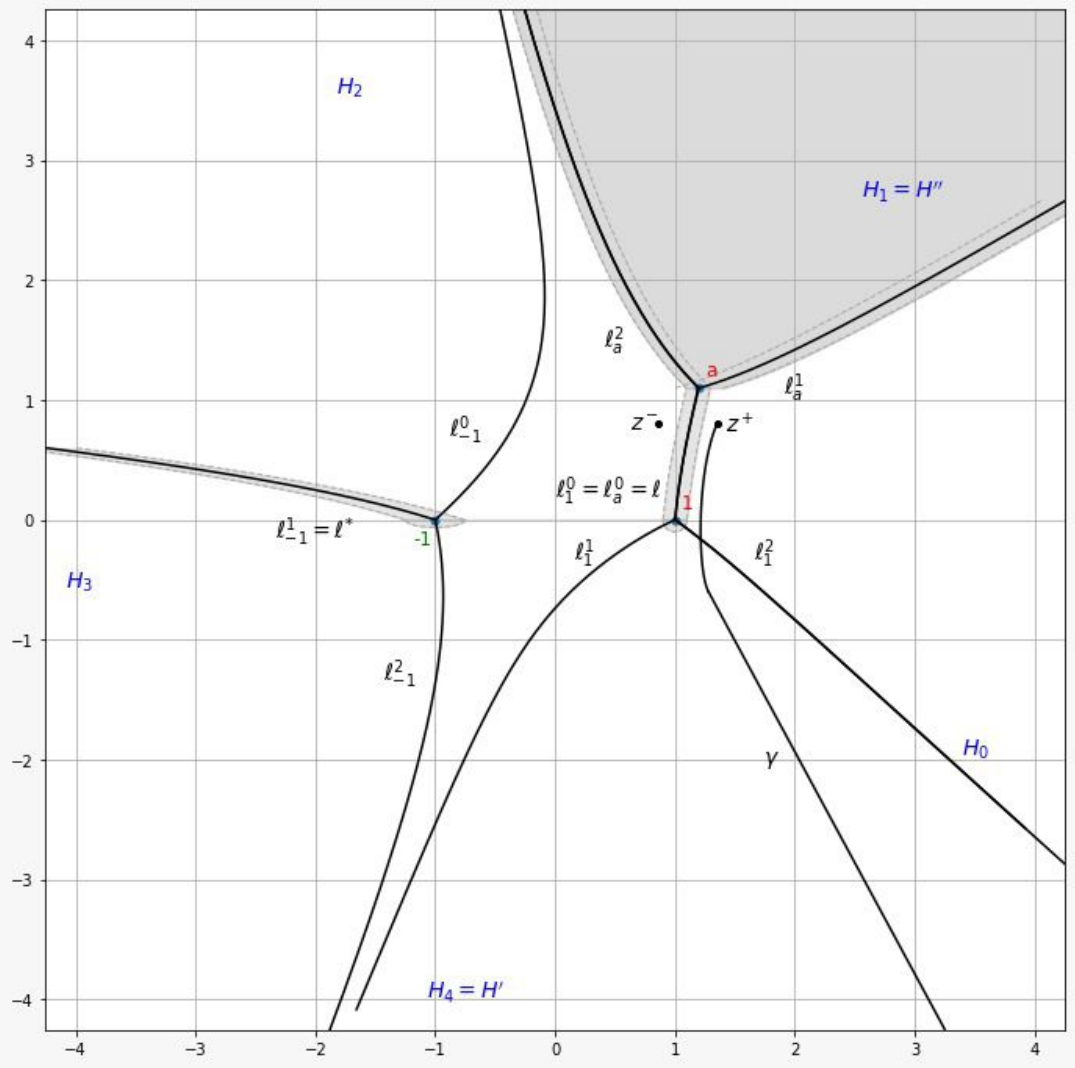}
  \caption{$D^1_{H',\varepsilon} $: The unshaded domain}\label{MDA}
  \end{minipage}\hfill   \begin{minipage}[b]{0.5\linewidth}
  \centering
  \includegraphics[width=0.8\textwidth]{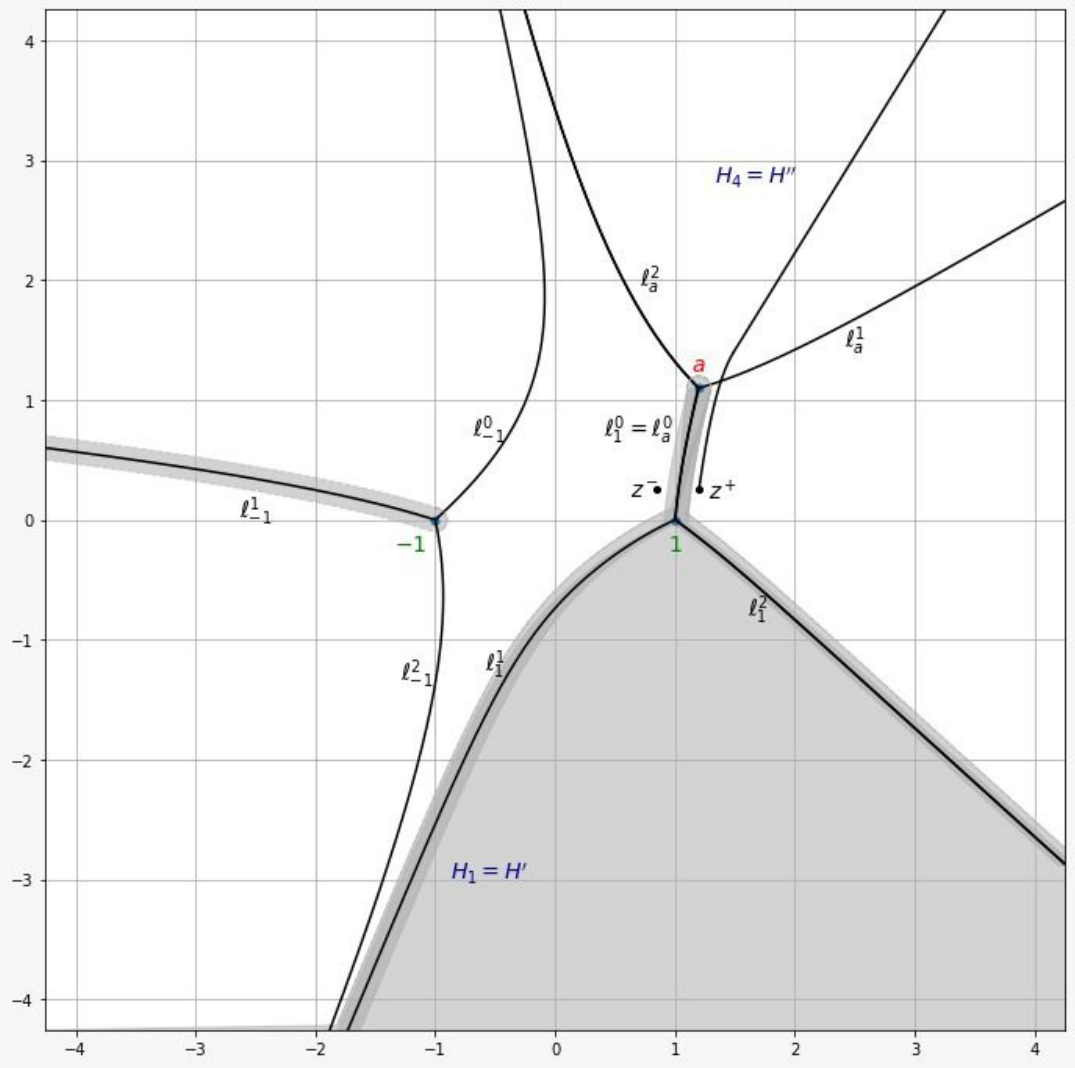}
  \caption{$D^2_{H'',\varepsilon}$: The unshaded domain}\label{MDA-}
  \end{minipage}
\end{figure}

Let $D=D^1_{H',\varepsilon}\cap D^2_{H'',\varepsilon}\cap V$ and $z^+,z^-\in D$, where $V$ is a $2\varepsilon-$neighborhood of $l_a^0$, such that $z^+,z^-$ does not belong to the same connected component of $D$. Then \eqref{1} and \eqref{2} are applicable in $z^+$ and $z^-$, we obtain the equation for the eigenvalue 
\begin{equation}
\label{eve}
\frac{y_{1}(z^+,a,\lambda)y_{2}(z^-,1,\lambda)}{y_{1}(z^-,a,\lambda)y_{2}(z^+,1,\lambda)}=1.
\end{equation} 
While the branches $(\sqrt{p_a(z)})_1\text{ and }(\sqrt{p_a(z)})_2$ are chosen such that $$(\Re{h(\lambda,a,z)})_1(\Re{h(\lambda,a,z)})_2<0 \text{  in }D^1_{H',\varepsilon}\cap D^2_{H'',\varepsilon}$$ then$  (\sqrt{p_a(z)})_1=-(\sqrt{p_a(z)})_2$. Additionally, the branches $(p_a(z)^\frac{1}{4})_1$ and $(p_a(z)^\frac{1}{4})_2$ are chosen such that  $(p_a(z^{+})^\frac{1}{4})_1= i(p_a(z^+)^\frac{1}{4})_2$, then it follows that $(p_a(z^{-})^\frac{1}{4})_1=-i(p_a(z^-)^\frac{1}{4})_2$, from \eqref{eve} we obtain an equation of eigenvalue :
$$
exp(i\pi+2|\lambda|(h(\lambda,1,a))_2)(1+O(\frac{1}{\lambda}))=1 .
$$
From Lemma  \ref{lm1}, $h(\lambda,1,a))$ is purely imaginary equals $\frac{1}{2}\oint_C e^{i\frac{\pi}{4}}\sqrt{p_a(t)}dt$, where $C$ is simple closed curve go around the short trajectory $l_0$.\\
Consequently, the set of eigenvalues $\{\lambda_n,~n\in \mathbb{N}\}$ is discrete, and verifies the asymptotic formula:
$$
|\lambda_n|\underset{n \to \infty}{\sim} (2n-1)\pi (\vert\oint_C e^{i\frac{\pi}{4}}\sqrt{p_a(t)}dt\vert)^{-1}
$$
then this set  can be arranged as  $|\lambda_{n_0}|<|\lambda_{n_0+1}|<...<|\lambda_n|....\rightarrow\infty$.
\end{itemize}

\item Let $a\in \mathcal{\chi }_{\frac{\pi }{4} }\setminus \left\{t_{\frac{\pi }{4}}\right\}$ and the Stokes graph is of \textbf{type BB}, there are precisely two pairs of non admissible half-plane domains, as depicted in Figure \ref{MDA2+}. It is sufficient to concentrate on the eigenvalue defined within one of these pairs :
\begin{equation}
      \begin{cases}
        -y^{\prime \prime }(z)+\lambda ^{2}(z-a)(z^{2}-1)y(z)=0\\ \lim_{|z|\to +\infty}y(z,\lambda)=0,~~z\in H_1\cup H_2
    \end{cases}
\end{equation}
By following the same steps as in the part  \ref{11} of the proof we obtain: 
\begin{itemize}
\item The subdominant solutions  $y_{1}(z,\lambda)$ and $y_{2}(z,\lambda)$ in $H_1$ and $H_2$ respectively are given by:  
\begin{equation}\label{sol3}
\begin{cases}
y_{1}(z,\lambda)=(p_a(z))_{2}^{-\frac{1}{4}}(e^{h(\lambda,a,z)}(1+O({\lambda}^{-1})) \\
y_{2}(z,\lambda)=(p_a(z))_{2}^{-\frac{1}{4}}(e^{-h(\lambda,-1,z)}(1+O({\lambda}^{-1}))
\end{cases}
\end{equation}
Here, the maximal domains of applicability of these expansions are denoted by $H_1^+$ and $H_2^-$, which are the non-shaded domains illustrated in Figures \ref{MDA2+} and \ref{MDA2-}.
\item Substituting \eqref{sol3} in \eqref{eve}, we obtain the equation for the eigenvalues: 
$$
exp(i\pi+2|\lambda|(h\left(\lambda,-1,a)\right)_2)\left[1+O(\frac{1}{\lambda})\right]=1 .
$$
We have the real part of the term $h(\lambda,-1,a)$ equal $\Re(h(\lambda,-1,1))$ which is not vanishing (there is no trajectory  joins $1$ and $-1$), then $\frac{\pi}{4}$ can not be an accumulation direction of \eqref{cubic equation} in this case.
\begin{figure}[h] 
  \begin{minipage}[b]{0.5\linewidth}\centering\includegraphics[width=0.8\textwidth]{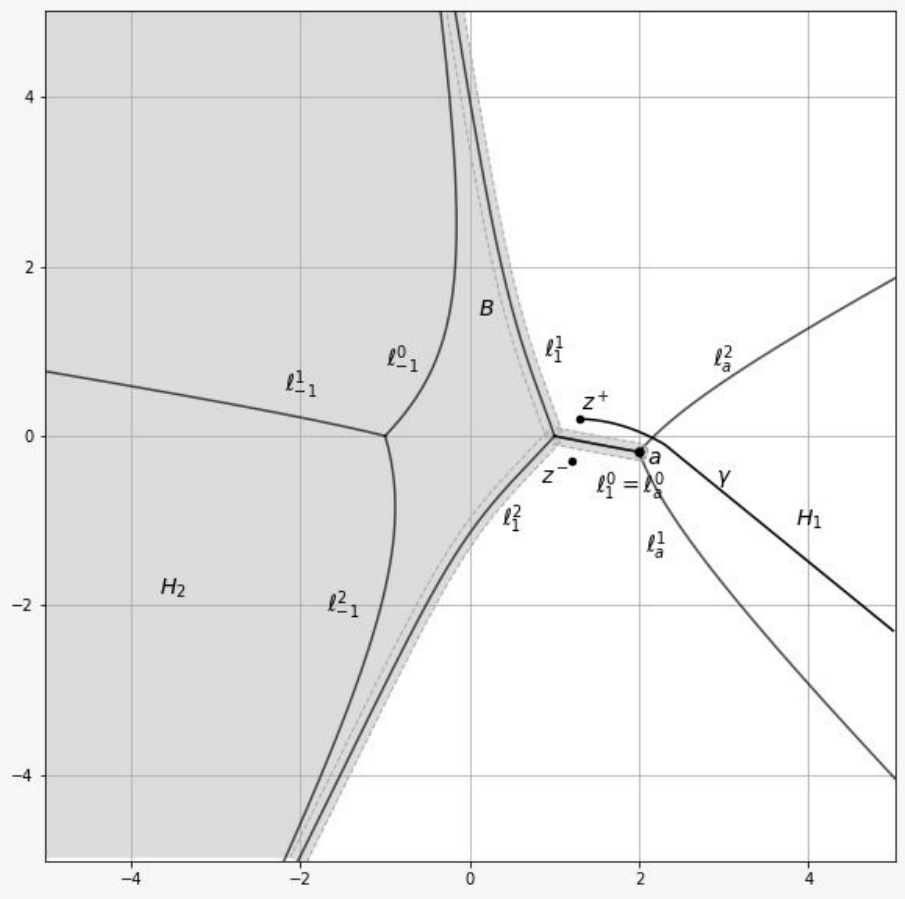}
  \caption{$H_1^+ $: the unshaded domain}\label{MDA2+}
  \end{minipage}\hfill   \begin{minipage}[b]{0.5\linewidth}
  \centering
  \includegraphics[width=0.8
  \textwidth]{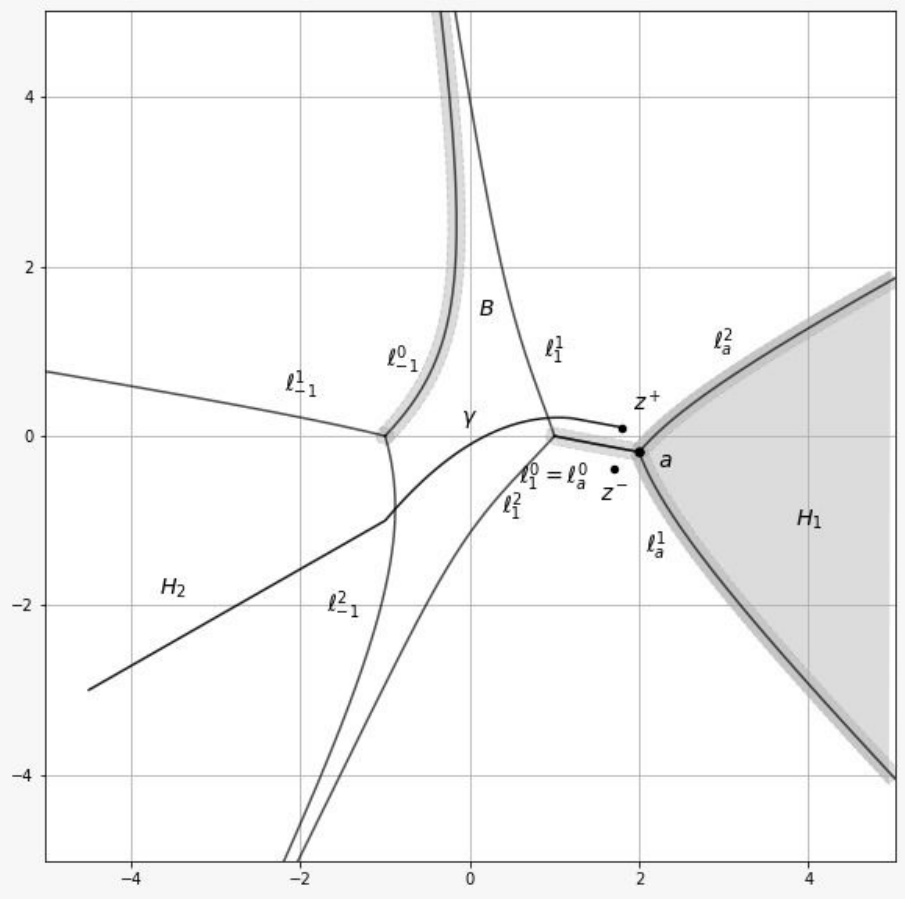}
  \caption{$H_2^-$: the unshaded domain}\label{MDA2-}
  \end{minipage}
\end{figure}

\end{itemize}

\item  Let $a=t_\frac{\pi}{4}$. As seen in Figure  \ref{fig tri} there exists
a unique Stokes complex that splits the complex plane into five half plane domains $H_0$ $H_1$, $H_2$, $H_3$ and $H_4$. 
\begin{figure}[th]
    \centering
    \includegraphics[width=0.4\linewidth]{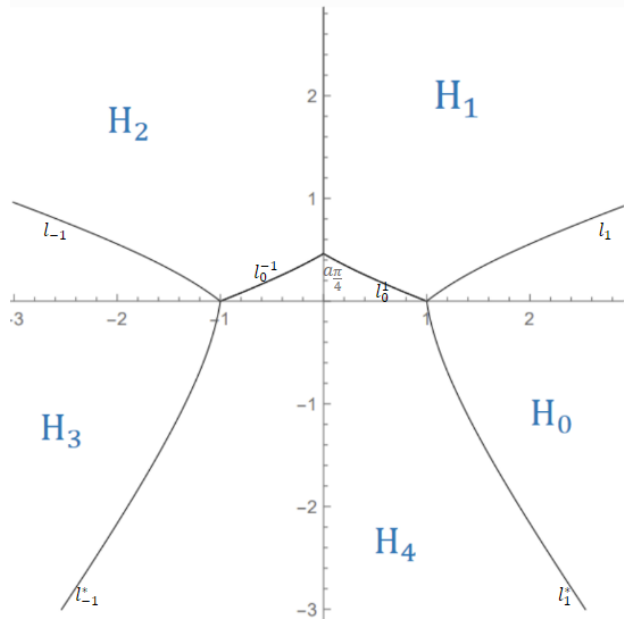}
    \caption{}
    \label{fig tri}
\end{figure}
There are three pairs of non admissible domains, so we have three possible eigenvalue problem:

\begin{equation}
    \begin{cases}\label{eig10}
      \dsp  -y^{\prime \prime }(z)+\lambda ^{2}(z-a)(z^{2}-1)y(z)=0\\ \dsp\lim_{|z|\to +\infty}y(z,\lambda)=0,~~z\in H_0\cup H_2
    \end{cases}
\end{equation}
\begin{equation}
    \begin{cases}\label{eig20}
        -y^{\prime \prime }(z)+\lambda ^{2}(z-a)(z^{2}-1)y(z)=0\\ \lim_{|z|\to +\infty}y(z,\lambda)=0,~~z\in H_1\cup H_3
    \end{cases}\end{equation}    
\begin{equation}
    \begin{cases}\label{eig30}
        -y^{\prime \prime }(z)+\lambda ^{2}(z-a)(z^{2}-1)y(z)=0\\ \lim_{|z|\to +\infty}y(z,\lambda)=0,~~z\in H_0\cup H_3
    \end{cases}\end{equation}    
\eqref{eig10} and \eqref{eig20} are  given with boundary conditions respectively in  $(H_0,H_2)$ and $(H_1,H_3)$, where are connected with unbroken short trajectory. This is similar to the case when $a\in S_{1,\theta_0}$ in  part \ref{11}. We deduce that the spectrum is discrete, accumulating near $L(\frac{\pi}{4})$  and verify,  respectively for $k=1$ and $k=-1$ the asymptotic formula 
$$
|\lambda_n|\underset{n \to \infty}{\sim} (2n-1)\pi (\vert\oint_{C_k} e^{i\frac{\pi}{4}}\sqrt{p_a(t)}dt\vert)^{-1}
$$
where $C_k$ is a simple closed contour go around the short trajectory   $l_0^k$.\\
For the remaining case \eqref{eig30}, where $H_0$ and $H_3$ are jointed with a broken short trajectory. We use transition matrices to obtain asymptotic formula for the eigenvalue. We define the canonical domains $D_{1},D_{2},D_{3}$ and $D_{4}$ by
\[
\begin{array}{lll}
     l_1\subset D_{1} & l_1^{*}\cup l_0^1\cup l_a\subset \partial D_1 & D_{1}=H_0\cup  H_1\\
     l_0^1\subset D_{1} & l_1^{*}\cup l_1\cup l_a\subset \partial D_1 & D_{2}=H_1\cup H_4 \\
     l_0^{-1}\subset D_{1} & l_1^{*}\cup l_a\cup l_{-1}\subset \partial D_1 & D_{3}=H_2\cup H_4\\
     l_{-1}\subset D_{1}& & D_{4}=H_3\cup H_4
\end{array}
\]
Then following this sequence of triples : 
\[
\left( D_{1},1,l_{1}\right)_1 \rightarrow \left( D_{2},1,l_{0}^{1}\right)_2
\rightarrow \left( D_{2},a,l_{0}^{1}\right)_3 \rightarrow \left(
D_{3},a,l_{0}^{-1}\right)_4 \rightarrow \left( D_{3},-1,l_{0}^{-1}\right)_5\rightarrow (D_4,-1,l_1^*)_4.
\]
From the paragraph  \ref{EFSS}, let $\{(u_i,v_i)\}_{1\leq i\leq 5}$ be elementary bases corresponding respectively to the triples above. Suppose that $y(z,\lambda)$ is a solution of \eqref{cubic equation} with asymptotic expansion \eqref{1} in $H_{1,\varepsilon}$. \\
Then $y(z,\lambda)=cv_1$, we will express $y$ in terms of $(u_4,v_4)$:  $y=a u_4+b v_4$
$$\dsp\left( 
\begin{array}{c}
a\left( \lambda \right) \\ 
b\left( \lambda \right)
\end{array}
\right) =K
\begin{pmatrix}
0 & \alpha _{0,1}^{-1}(\lambda) \\ 
1 & i\alpha _{2,1}(\lambda)
\end{pmatrix}
\begin{pmatrix}
0 & e^{-i|h(a,-1,\lambda)| } \\ 
e^{i|h(a,-1,\lambda)|} & 0 
\end{pmatrix}
\begin{pmatrix}
0 & \beta _{0,1}^{-1}(\lambda) \\ 
1 & i\beta _{1,2}(\lambda)
\end{pmatrix}$$
$$\dsp
\begin{pmatrix}
0 & e^{-i|h(1,a,\lambda)|} \\ 
e^{i|h(1,a,\lambda)|} & 0
\end{pmatrix}
\begin{pmatrix}
0 & \gamma _{0,1}^{-1}(\lambda) \\ 
1 & i\gamma _{1,2}(\lambda)
\end{pmatrix}
\begin{pmatrix}
0 \\ 
1
\end{pmatrix}
= \frac{1}{\beta_{1, 2} \gamma_{1, 2}} \allowbreak \begin{pmatrix}
 \Delta_1\\ \Delta_2
\end{pmatrix}
\allowbreak, $$
where 
\begin{equation}
    \begin{cases}
     \dsp \Delta_1=\alpha_{1, 2}^{-1}\dsp 
     e^{i |h(-1, a,\lambda)| + i |h(1, a\lambda)|} \\
  \dsp  \Delta_2=i e^{-i |h(-1, a,\lambda) - i|h(1, a,\lambda)|} (\alpha_{2, 3}\dsp e^{2i (|ih(-1, a,\lambda)| + |h(1, a,\lambda)|)} + \beta^{-1}_{1,3} e^{2 i |h(1, a,\lambda)} + \beta_{1, 2} \gamma^{-1}_{1,3}) 
    \end{cases}
\end{equation}

\[
\begin{array}{llll}
     \lambda \text{ is an eigenvalue of } \eqref{eig30} \iff b(\lambda)=0 \\
     \dsp\iff \alpha_{2, 3} e^{2i (|h(-1, a,\lambda)| + |h(1, a,\lambda)|)} + \beta_{1, 2} \beta_{2, 3} e^{2 i |h(1, a,\lambda)|}=-1+O(\frac{1}{\lambda}) \\
     \dsp\iff e^{i(\vert\lambda\oint_{C_1} e^{i\frac{\pi}{4}}\sqrt{P(t)}dt\vert+\frac{1}{2} \vert\lambda\oint_{C_{-1}} e^{i\frac{\pi}{4}}\sqrt{p_a(t)}dt\vert)}2\cos(\frac{1}{2}~ \vert\lambda\oint_{C_{-1}} e^{i\frac{\pi}{4}}\sqrt{p_a(t)}dt\vert)=\\~~~~~~~~~~-1+O(\frac{1}{\lambda})\\
     \iff\begin{cases}
        \dsp|\lambda_n|\underset{|\lambda_n| \to +\infty}{\sim}(2n+1)\pi (\vert\oint_{C_1}e^{i\frac{\pi}{4}}\sqrt{p_a(t)}dt\vert+\frac{1}{2} \vert\oint_{C_{-1}} e^{i\frac{\pi}{4}}\sqrt{p_a(t)}dt\vert)^{-1}\\ \dsp|\lambda_n|\underset{|\lambda_n| \to +\infty}{\sim}(2\varphi(n)+\frac{2\pi}{3})\pi (\vert\oint_{C_{-1}} e^{i\frac{\pi}{4}}\sqrt{p_a(t)}dt\vert)^{-1}    
        \end{cases}
\end{array}
\]
Then $\lambda_n$ is an eigenvalue of \eqref{eig30} if and only if $\dsp\frac{\vert\oint_{C_{-1}} e^{i\frac{\pi}{4}}\sqrt{p_a(t)}dt\vert}{\vert\oint_{C_{1}} e^{i\frac{\pi}{4}}\sqrt{p_a(t)}dt\vert}=\frac{n}{\varphi(n)}-\frac{1}{2}\in \mathbb{Q}$.\\
Here the symmetry observed in the Stokes graph of $e^\frac{\pi}{2}(z^2-1)(z-a_\frac{\pi}{4})$ implies that    $ \vert\oint_{C_1} e^{i\frac{\pi}{4}}\sqrt{p_a(t)}dt\vert=\vert\oint_{C_{-1}} e^{i\frac{\pi}{4}}\sqrt{p_a(t)}dt\vert$, so the condition of the existence of a solution to the eigenvalue problem \eqref{eig30} is verified. However, it is important to note that this condition does not hold universally  for $a\in\{a_\theta,e_\theta\}$ with $\theta\neq \frac{\pi}{4}$.

\end{enumerate}
\end{proof}

\begin{proof}[Proof of (\protect \ref{domain zeros})]
Suppose that $f_{a,n}(X)=0$ for some $X\in \Pi _{\varepsilon }$, for example 
$X\in D_{H^{\prime },\varepsilon }^{1}$. The formula in subsection \ref{formula1} is
valid for all canonical paths $\gamma _{1,\varepsilon }^{+}$, starting from $X$ to $\infty \exp (i\alpha )$, such that $\Re\left[ \lambda
_{n}\int_{X}^{z}\left( \sqrt{p_{a}(t)}\right) _{1}dt\right] \rightarrow
+\infty $, as $z\in \gamma _{1,\varepsilon }^{+}$. From $-f_{a,n}^{\prime \prime }(z)+\lambda
_{n}^{2}p_{a}(z)f_{a,n}(z)=0$, we have $f_{a,n}^{\prime \prime }(z)\overline{f_{a,n}(z)}=\lambda _{n}^{2}p_{a}(z)\left\vert f_{a,n}(z)\right\vert ^{2}$, and by integrating by parts along $\gamma _{1,\varepsilon }^{+}$ we obtain
\begin{equation}
0=\left[ f_{a,n}^{\prime }(z)\overline{f_{a,n}(z)}\right] _{\gamma
_{1,\varepsilon }^{+}}=\int_{\gamma _{1,\varepsilon }^{+}}\left\vert
f_{a,n}^{\prime }(z)\right\vert ^{2}\overline{dz}+\int_{\gamma
_{1,\varepsilon }^{+}}\left( \lambda _{n}^{2}p_{a}(z)\left\vert
f_{a,n}(z)\right\vert ^{2}\right) dz.
\label{equation-integr}
\end{equation}
The construction of $\gamma _{1,\varepsilon }^{+}$ to obtain a contradiction in (\ref{equation-integr}) is crucial in our proof. Let $D_{1}$ and $D_{1,\varepsilon }$ be canonical domains that contain $H^{\prime }$ as described in Section (\ref{caseB}). For definiteness, we suppose that the cuts are all directed downwards. Let $\phi (z)=\lambda _{n}\int^{z}\left( \sqrt{p_{a}(t)}\right) _{1}dt$ a one-to-one conformal transformation from $D_{1}$ to $\phi(D_{1})$, which are both simply connected domains (see Lemma \ref{first lemma} for details). By differentiation, $\phi^{\prime }(z)=\lambda _{n}\left( \sqrt{p_{a}(z)}\right) _{1}$ for all $z\in D_{1,\varepsilon }$. Now, let $\varrho _{1,\varepsilon }=\phi (\gamma_{1,\varepsilon }^{+})$ a path in $\phi (D_{1,\varepsilon })$ starting from $\phi (X)=\xi _{X}+i\eta _{X}$. By the change of variable $\zeta =\xi +i\eta =\phi (z)$ for all $z\in D_{1,\varepsilon }$, we obtain in (\ref{equation-integr}):
\begin{equation}
0=\int_{\varrho _{1,\varepsilon }}\left\vert f_{a,n}^{\prime }(\phi
^{-1}(\zeta ))\right\vert ^{2}\frac{\overline{d\zeta }}{\overline{\phi
^{\prime }(\phi ^{-1}(\zeta ))}}+\int_{\varrho _{1,\varepsilon }}\left\vert
f_{a,n}(\phi ^{-1}(\zeta ))\right\vert ^{2}\phi ^{\prime }(\phi ^{-1}(\zeta
))d\zeta   
\label{equation-integr2}
\end{equation}
There are two possibilities:
\begin{enumerate}
\item The horizontal path $\left\{ \zeta \in \phi (D_{1,\varepsilon });\Re\zeta
\geq \xi _{X}\text{ and }\Im\zeta =\Im\eta _{X}\right\} $ does not
intersect the cuts. In this case, we take $\varrho _{1,\varepsilon
}:=\left\{ \zeta \in \phi (D_{1,\varepsilon });\Re\zeta \geq \xi _{X}
\text{ and }\Im\zeta =\Im\eta _{X}\right\} $. On $\varrho
_{1,\varepsilon }$, we have $d\zeta =d\xi $ and $\phi ^{\prime }(\phi
^{-1}(\zeta ))=\Re\left[ \phi ^{\prime }(\phi ^{-1}(\zeta ))\right] >0.
$ In fact, the function $\zeta \longmapsto \Re\zeta $ is strictly
increasing on $\varrho _{1,\varepsilon }$. Equation (\ref{equation-integr2}) becomes:
\begin{equation*}
0=\int_{\xi _{X}}^{+\infty }\left\vert f_{a,n}^{\prime }(\phi ^{-1}(\zeta
))\right\vert ^{2}\frac{d\xi }{\Re\left[ \phi ^{\prime }(\phi^{-1}(\zeta ))\right] }+\int_{\xi _{X}}^{+\infty }\left( \left\vert
f_{a,n}(\phi ^{-1}(\zeta ))\right\vert ^{2}\Re\left[ \phi ^{\prime
}(\phi ^{-1}(\zeta ))\right] \right) d\xi 
\end{equation*}
It is obvious that the right-hand side is strictly positive, which lead to a contradiction.
\item If the horizontal path $\left\{ \zeta \in \phi (D_{1,\varepsilon });\Re \zeta \geq \xi _{X}\text{ and }\Im\zeta =\Im\eta _{X}\right\} $
intersects the cuts. In this case, we can choose the path $\varrho
_{1,\varepsilon }$ in $\phi (D_{1,\varepsilon })$ starting from $\phi
(X)=\xi _{X}+i\eta _{X}$ passing through $\Im\phi (X^{\prime })=\xi
_{X^{\prime }}+i\eta _{X^{\prime }}$, where $\eta _{X^{\prime }}>0$, with
two components:
\begin{equation*}
\begin{array}{ll}
\varrho _{1,\varepsilon }^{\bullet }:=\left\{ \zeta \in \phi
(D_{1,\varepsilon });\Re\zeta =\xi _{X}<0\text{ and }\eta _{X^{\prime
}}\geq \Im\zeta \geq \eta _{X}\right\}  \\ 
\varrho _{1,\varepsilon }^{\bullet \bullet }:=\left\{ \zeta \in \phi
(D_{1,\varepsilon });\Re\zeta \geq \xi _{X}\text{ and }\Im\zeta =
\Im\eta _{X^{\prime }}\right\} ;  
\end{array}
\end{equation*}
see the Figure \ref{ph-plane}.
\begin{figure}[h]
    \centering
    \includegraphics[width=0.4\linewidth]{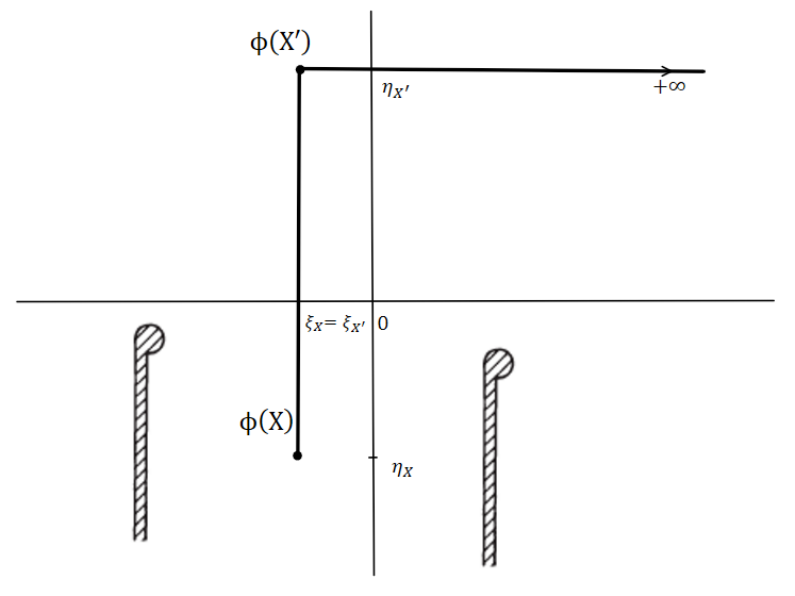}
    \caption{$\zeta=\xi+i\eta=\phi(z)$-plane}
    \label{ph-plane}
\end{figure}
\begin{itemize}
\item In the first component $\varrho _{1,\varepsilon }^{\bullet }$, $d\zeta =id\eta $ and $\phi ^{\prime }(\phi ^{-1}(\zeta ))=i\Im\left[ \phi
^{\prime }(\phi ^{-1}(\zeta ))\right] $, where $\Im\left[ \phi
^{\prime }(\phi ^{-1}(\zeta ))\right] >0$. In fact, the function $\zeta
\longmapsto \Im\zeta $ is strictly increasing from $\eta _{X}$ to $
\eta _{X^{\prime }}$. We have:
\begin{eqnarray*}
\int_{\varrho _{1,\varepsilon }^{\bullet }}\left\vert f_{a,n}^{\prime }(\phi
^{-1}(\zeta ))\right\vert ^{2}\frac{\overline{d\zeta }}{\overline{\phi
^{\prime }(\phi ^{-1}(\zeta ))}} &=&\int_{\eta _{X}}^{\eta _{X^{\prime
}}}\left\vert f_{a,n}^{\prime }(\phi ^{-1}(\zeta ))\right\vert ^{2}\frac{
d\eta }{\Im\left[ \phi ^{\prime }(\phi ^{-1}(\zeta ))\right] } \\
\int_{\varrho _{1,\varepsilon }^{\bullet }}\left\vert f_{a,n}(\phi
^{-1}(\zeta ))\right\vert ^{2}\phi ^{\prime }(\phi ^{-1}(\zeta ))d\zeta 
&=&-\int_{\eta _{X}}^{\eta _{X^{\prime }}}\left( \left\vert f_{a,n}(\phi
^{-1}(\zeta ))\right\vert ^{2}\Im\left[ \phi ^{\prime }(\phi
^{-1}(\zeta ))\right] \right) d\eta 
\end{eqnarray*}
so we have 
\begin{eqnarray*}
&&\int_{\varrho _{1,\varepsilon }^{\bullet }}\left\vert f_{a,n}^{\prime
}(\phi ^{-1}(\zeta ))\right\vert ^{2}\frac{\overline{d\zeta }}{\overline{
\phi ^{\prime }(\phi ^{-1}(\zeta ))}}+\int_{\varrho _{1,\varepsilon
}^{\bullet }}\left\vert f_{a,n}(\phi ^{-1}(\zeta ))\right\vert ^{2}\phi
^{\prime }(\phi ^{-1}(\zeta ))d\zeta  \\
&=&\int_{\eta _{X}}^{\eta _{X^{\prime }}}\left\vert f_{a,n}(\phi ^{-1}(\zeta
))\right\vert ^{2}\Im\left[ \phi ^{\prime }(\phi ^{-1}(\zeta ))\right]
\left( \frac{\left\vert f_{a,n}^{\prime }(\phi ^{-1}(\zeta ))\right\vert ^{2}
}{\left\vert f_{a,n}(\phi ^{-1}(\zeta ))\right\vert ^{2}}\frac{1}{\left( 
\Im\left[ \phi ^{\prime }(\phi ^{-1}(\zeta ))\right] \right) ^{2}}
-1\right) d\eta \\
&=&\int_{\eta _{X}}^{\eta _{X^{\prime }}}\psi^{\bullet}(\zeta=\xi_{X}+i\eta)d\eta
\end{eqnarray*}
where\\
$\psi^{\bullet}(\zeta=\xi_{X}+i\eta)=\left\vert f_{a,n}(\phi ^{-1}(\zeta
))\right\vert ^{2}\Im\left[ \phi ^{\prime }(\phi ^{-1}(\zeta ))\right]
\left( \frac{\left\vert f_{a,n}^{\prime }(\phi ^{-1}(\zeta ))\right\vert ^{2}
}{\left\vert f_{a,n}(\phi ^{-1}(\zeta ))\right\vert ^{2}}\frac{1}{\left( 
\Im\left[ \phi ^{\prime }(\phi ^{-1}(\zeta ))\right] \right) ^{2}}
-1\right)$
\item In the second component $\varrho _{1,\varepsilon }^{\bullet \bullet }$, as in the first case we have:
\begin{eqnarray}
&&\int_{\varrho _{1,\varepsilon }^{\bullet \bullet }}\left\vert
f_{a,n}^{\prime }(\phi ^{-1}(\zeta ))\right\vert ^{2}\frac{\overline{d\zeta }
}{\overline{\phi ^{\prime }(\phi ^{-1}(\zeta ))}}+\int_{\varrho
_{1,\varepsilon }^{\bullet \bullet }}\left\vert f_{a,n}(\phi ^{-1}(\zeta
))\right\vert ^{2}\phi ^{\prime }(\phi ^{-1}(\zeta ))d\zeta \\
&=&\int_{\xi _{X}}^{+\infty }\psi^{\bullet\bullet}(\zeta=\xi+i\eta_{X'})d\xi.
\end{eqnarray}
where
\[
\psi^{\bullet\bullet}(\zeta=\xi+i\eta_{X'})=\left\vert f_{a,n}(\phi ^{-1}(\zeta
))\right\vert ^{2}\Re\left[ \phi ^{\prime }(\phi ^{-1}(\zeta ))\right]
\left( \frac{\left\vert f_{a,n}^{\prime }(\phi ^{-1}(\zeta ))\right\vert ^{2}
}{\left\vert f_{a,n}(\phi ^{-1}(\zeta ))\right\vert ^{2}}\frac{1}{\left( 
\Re\left[ \phi ^{\prime }(\phi ^{-1}(\zeta ))\right] \right) ^{2}}
+1\right)
\]
\end{itemize}
By \eqref{WKB theorem} and \eqref{WKB derivative},
\begin{eqnarray}
\frac{\left\vert
f_{a,n}^{\prime }(z)\right\vert^{2} }{\left\vert f_{a,n}(z)\right\vert^{2} }
&=&\left\vert \phi ^{\prime }(z)\right\vert ^{2}+\epsilon (z)\\
\end{eqnarray}
 where $\epsilon (z)\in\mathbb{R} \rightarrow 0$ as $z\rightarrow
\infty $, $z\in \gamma _{1,\varepsilon }^{+}$, which implies that:
\begin{equation}
\begin{cases}
\psi^{\bullet}(\zeta=\xi_{X}+i\eta)=\left\vert f_{a,n}(\phi ^{-1}(\zeta
))\right\vert ^{2}\Im\left[ \phi ^{\prime }(\phi ^{-1}(\zeta ))\right]
(\frac{\epsilon(\phi^{-1}(\zeta))}{\left\vert\Im\phi^{\prime}(\phi^{-1}(\zeta))\right\vert^{2}})\\
 \psi^{\bullet\bullet}(\zeta=\xi+i\eta_{X'})=\left\vert f_{a,n}(\phi ^{-1}(\zeta
))\right\vert ^{2}\Re\left[ \phi ^{\prime }(\phi ^{-1}(\zeta ))\right]
(2+\frac{\epsilon(\phi^{-1}(\zeta))}{\left\vert\Re\phi^{\prime}(\phi^{-1}(\zeta))\right\vert^{2}}).
\end{cases}  
\end{equation}
From:
\begin{equation}
    \begin{cases}
      (\dsp\frac{\epsilon(\phi^{-1}(\zeta))}{\dsp\left\vert\Im\phi^{\prime}(\phi^{-1}(\zeta))\right\vert^{2}})\rightarrow 0, \Im\zeta=\eta\rightarrow +\infty\\ 
      (\dsp \frac{\epsilon(\phi^{-1}(\zeta))}{\left\vert\Re\phi^{\prime}(\phi^{-1}(\zeta))\right\vert^{2}})\rightarrow 0, \xi=\Re\zeta\rightarrow +\infty,\eta=\Im\zeta\rightarrow+\infty\\
      \Re\left[ \phi ^{\prime }(\phi ^{-1}(\zeta ))\right]\rightarrow+\infty, \xi=\Re\zeta\rightarrow +\infty,\eta=\Im\zeta\rightarrow+\infty\\
     \left\vert \Im\left[ \phi ^{\prime }(\phi ^{-1}(\zeta ))\right]\right\vert \leq M, \eta_{X}\leq\eta=\Im\zeta\leq\eta_{X'}
    \end{cases}
\end{equation}
we can choose $\eta_{X^{\prime }}$ large enough such that the right hand of (\ref{equation-integr2}) is again strictly positive, which leads to a
contradiction.
\end{enumerate}
By the same techniques, if $X\in \Pi _{\varepsilon }\backslash D_{H^{\prime },\varepsilon }^{1}$, we construct $\gamma _{2,\varepsilon }^{+}$ with an unbounded component from anti-Stokes line in $H^{^{\prime \prime }}$, which completes the proof.
\end{proof}

\section{Conclusions}\label{summary}
\begin{enumerate}
    \item In our work, we study all possible geometric configurations of cubic oscillator with three simple turning points. A natural question is about the cubic oscillator with double turning points which correspond to the cases $a\in \{-1,1\}$ in our work. Do we have a "continuity version" of the spectrum and zero locations as $a\to\{-1,1\},a\in\Sigma _{\theta }$?

    \item Theorem \ref{main result1} gives a necessary and sufficient condition guaranteeing that there exist an eigenvalue problem \ref{zeros-EP} with infinitely many eigenvalues which belong to some accumulation ray. This give an answer to \cite[Problem 2]{shapiroev} in the case of the degree of the polynomial potential is $3$. A similair results was obtained in \cite{shin3} in the case of Sturm-Liouville problem with PT-symmetric potential (which correspond to $\theta=\frac{\pi}{4}$ in our work). 

    \item In Subsection \ref{topology Stokes}, we had mentioned only the first parts of the asymptotics. In fact, we can extand the asymptotics WKB formulas (\ref{uniform asymp}) to:
    \begin{equation}
y_{l}(z,\lambda )=(p_a(z))^{-\frac{1}{4}}\exp (\pm h(\lambda,z_{0},z))[1+
 \underset{k=1}{\overset{+\infty }{\sum }}(\frac{b_{k}(z)}{\lambda^{k}})]
\end{equation}
where $l\in\{1,2\}$ and ${b_{k}(z)}$ depend not in $\lambda$, and are determined from the construction of $\phi _{l}(z,\lambda
)$ (see \ref{volterra}). 

A naturel question in the excat WKB method (WKB method based on Borel resummation technics) (see\cite{kawai}), is to establish a necessary and sufficient conditions guaranteenig the Borel summability of WKB solutions. For second order ODE with polynomial potentials, it was assumed to have no finite Stokes line (see\cite[Proposition 2.12]{kawai}) as sufficient condition. In our case, the location of $a\in \mathcal{\chi }_{\theta }$ \eqref{chi theta} is crucial to have this condition. But it still the problem whether the WKB solutions are Borel summable as $a\in\Sigma _{\theta }\setminus\mathcal{\chi }_{\theta }$ (which means that $\mathcal{\Re }\int_{l}e^{i\theta }\sqrt{p_{a}(z)}dt=0$ for all Jordan curve $l$ connecting two turning points, and there exist not a finite Stokes line connecting this two turning points)?

\item From the works of Sibuya and Hille \cite{sibuya,Hille,Hille2}, the infinite zeros of non trivial solutions to second order ODE with polynomials coefficients, tends to accumulate, as $\lambda$ fixed and large $z$, in some Stokes rays defined by:
\begin{equation}
  R_{j}=\{z\in\mathbb{C}; \arg(z)=\alpha_{j}\}
\end{equation}
where $\alpha_{j}$ is a critical directions (see \ref{critical directions}). Later this proposition was corrected by Bank \cite{Bank1}. In fact, It was proven that the infinite zeros tend to accumulate near a translate ray:
\begin{equation}
  R^{c}_{j}=\{z\in\mathbb{C}; \arg(z+c)=\alpha_{j}\} 
\end{equation}
for some $c\in\mathbb{C}$. For a detailed exposition on the topic the reader can see \cite{zemirni}.

Ou results in sections \ref{zeros section}), gives another justification to result in \cite{Bank1} in the case of cubic oscillator. In fact, as $\lambda$ fixed and $z$ large, infinite zeros of subdominant solutions (the set $\mathcal{Z}_{a,n}^{unb}$ ( see Corollary\ref{zeros corollary})) will accumulate near an infinite Stokes line asymptotic to some $R_{j}(\varepsilon)$ and so near a translated ray. 

Regarding the finite number of zeros (the set $\mathcal{Z}_{a,n}^{b}$ (see corollary\ref{zeros corollary}), a conjecture on the interlacing of zeros was introduced in \cite{Bender}. We pretend that the geometry of the finite Stokes line will be crucial in the proof of this conjecture even for higher degree polynomials potentials.
\end{enumerate}

\begin{acknowledgement}
This work is partially supported by the research laboratory LR17ES11 from the Faculty of Sciences of Gabes, Tunisia. Mondher Chouikhi would like to thank professor Alexandre Eremenko for indication to reference \cite{trinh}.
\end{acknowledgement}

\section*{Declarations}
\begin{itemize}
    \item \textbf{Conflict of interest}:
The authors have no Conflict of interest to declare in this work.
\item \textbf{Data availability statement}: No datasets were generated or analyzed during the current study.
\end{itemize}
\section{Appendix\label{Appendix}}
In this section, we will give a proof of (\ref{uniform asymp}). The proof is based on classical Liouville transformation, following \cite[chapter6]{olver} (see also \cite{heading}). In \cite{fed1} and \cite{fed2}, the authors' construction of asymptotic solutions is different.\\
Fix $\varepsilon >0$. Recall some notations from  Subsection \ref{topology Stokes}. Let $H$ be a half plane such that $\partial H$ contains a turning point $z_{0}\in \left\{ \pm 1,a\right\} $ and $D$ be a canonical domain that contains $H$.  For $\theta \in \lbrack 0,\frac{\pi }{2}[$ and $\lambda =r\exp (i\theta )\in \mathbb{C}^{\ast }$, the function $h(\theta ,z_{0},z)=\exp (i\theta )\int_{z_{0}}^{z}\sqrt{(t-a)(t^{2}-1)}dt$ is multi-valued. Fix a branch $h$ of $h(\theta,z_{0},z)$ that maps $D$ conformally onto the plane with vertical cuts, and such that $h(H)=\left\{ \zeta :\Re\zeta >0\right\} $, and let $D_{\varepsilon }$ the preimage of $h(D)$ with $\varepsilon $-neighborhoods of the cuts and $\varepsilon $-neighborhoods of the turning points removed. For every point $z$ in $D_{\varepsilon }$, there exists an infinite canonical
path $\gamma _{\varepsilon }^{+}$ from $z$ to $\infty $, such that $\Re
(h(t))\uparrow +\infty $ dor all $t\in \gamma _{\varepsilon }^{+}$ as $
t\longrightarrow \infty $. 
\begin{theorem}
\label{WKB theorem} 
With $H,D_{\varepsilon }$ and $h$ as above, the ODE (\ref{first ODE}) admits (up to constant multiple) a solution $y_{2}(z,\lambda )$, which satisfies
\begin{equation}
y_{2}(z,\lambda )=(p_{a}(z))^{-\frac{1}{4}}\exp (-h(\lambda
,z_{0},z))[1+\phi _{2}(z,\lambda )],  
\label{F2}
\end{equation}
where $\left\vert \phi _{2}(z,\lambda )\right\vert \leq c_{2}\dsp\frac{
r_{\varepsilon }}{r-r_{\varepsilon }}$ for all $z\in \gamma _{\varepsilon
}^{+}$. The constant $c_{2}$ independent of $\lambda $ and 
\begin{equation*}
r_{\varepsilon }=\sup \left( \int_{\gamma _{\varepsilon }^{+}}\left\vert 
\frac{5}{16}\frac{\left( p_{a}^{\prime }(t)\right) ^{2}}{\left(
p_{a}(t\right) )^{3}}-\frac{\left( p_{a}^{\prime \prime }(t)\right) }{
4\left( p_{a}(t)\right) ^{2}}\right\vert \sqrt{\left\vert
p_{a}(t)\right\vert }\left\vert dt\right\vert \right) ,
\end{equation*}
where the supremum is taken over all infinite canonical paths $\gamma
_{\varepsilon }^{+}$ in  $D_{\varepsilon }$.
\end{theorem}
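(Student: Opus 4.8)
The plan is to prove \eqref{F2} by the classical Liouville transformation (Liouville--Green method), following \cite[Chapter~6]{olver}. First I would introduce the new variable $\zeta=\zeta(z)=\int_{z_{0}}^{z}\sqrt{p_{a}(t)}\,dt$, so that $h(\lambda,z_{0},z)=\lambda\zeta$ and, by hypothesis, $z\mapsto\zeta$ maps $D$ conformally onto the plane with vertical slits, with $H\mapsto\{\Re\zeta>0\}$. Putting $W(\zeta)=(p_{a}(z))^{1/4}y(z)$ turns \eqref{first ODE} into
\begin{equation*}
\frac{d^{2}W}{d\zeta^{2}}=\bigl(\lambda^{2}+\Psi(\zeta)\bigr)W,\qquad
\Psi(\zeta)=\frac{1}{p_{a}(z)}\left(\frac{5}{16}\frac{\bigl(p_{a}'(z)\bigr)^{2}}{\bigl(p_{a}(z)\bigr)^{2}}-\frac{p_{a}''(z)}{4\,p_{a}(z)}\right),
\end{equation*}
a bounded perturbation of $W''=\lambda^{2}W$ whose coefficient $\Psi$ is holomorphic on $D$ and, since $z_{0}$ is a simple zero of $p_{a}$ and $\infty$ a pole of order $7$ of $\varpi_{a,\theta}$, behaves like $\zeta^{-2}$ as $\zeta\to\infty$ along each critical direction.

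Next, writing $W=e^{-\lambda\zeta}\,[1+\phi_{2}]$ and applying variation of parameters with base solutions $e^{\pm\lambda\zeta}$, integrated from $\infty$ back along the image $\varrho_{\varepsilon}^{+}=\zeta(\gamma_{\varepsilon}^{+})$ of the infinite canonical path, one obtains the Volterra integral equation
\begin{equation*}
\phi_{2}(\zeta)=\frac{1}{2\lambda}\int_{\varrho_{\varepsilon}^{+}}\Bigl(1-e^{-2\lambda(s-\zeta)}\Bigr)\,\Psi(s)\,[1+\phi_{2}(s)]\,ds .
\end{equation*}
The decisive point is that, on a canonical path, $\Re(\lambda(s-\zeta))$ is nondecreasing as $s$ runs from $\zeta$ to $\infty$, so the kernel factor is bounded; this is exactly where the definition of $\gamma_{\varepsilon}^{+}$ and of $D_{\varepsilon}$ (which keeps the path a distance $\varepsilon$ from the turning points and from the cuts) enters. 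The successive approximations $\phi_{2}^{(0)}=0$, $\phi_{2}^{(k+1)}=(\text{RHS with }\phi_{2}^{(k)})$ converge geometrically, and the Neumann-series estimate of \cite[Chapter~6]{olver} gives
\begin{equation*}
\bigl|\phi_{2}(z,\lambda)\bigr|\le\exp\!\Bigl(\tfrac{1}{r}\!\int_{\gamma_{\varepsilon}^{+}}\!\bigl|\Psi(\zeta(t))\bigr|\,\bigl|\zeta'(t)\bigr|\,|dt|\Bigr)-1\le c_{2}\,\frac{r_{\varepsilon}}{r-r_{\varepsilon}},
\end{equation*}
once one notes that $\bigl|\Psi(\zeta(t))\bigr|\,\bigl|\zeta'(t)\bigr|=\bigl|\tfrac{5}{16}\tfrac{(p_{a}')^{2}}{p_{a}^{3}}-\tfrac{p_{a}''}{4p_{a}^{2}}\bigr|\sqrt{|p_{a}|}$, which reproduces the displayed formula for $r_{\varepsilon}$; here $r_{\varepsilon}<\infty$ precisely because $\Psi$ is $O(\zeta^{-2})$ at infinity and $\gamma_{\varepsilon}^{+}$ avoids the turning points. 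The bound is uniform in $\lambda$ for $r=|\lambda|>r_{\varepsilon}$, and the same scheme shows $\phi_{2}=O(\lambda^{-1})$ uniformly on $D_{\varepsilon}$ and $\phi_{2}(z,\lambda)\to0$ as $z\to\infty$ along $\gamma_{\varepsilon}^{+}$. Uniqueness up to a constant multiple follows from the uniqueness of the solution of the Volterra equation.

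The main obstacle is geometric rather than analytic: one must show that from every $z\in D_{\varepsilon}$ there issues an infinite canonical path $\gamma_{\varepsilon}^{+}$ to $\infty$, lying in $D_{\varepsilon}$, along which $\Re h\uparrow+\infty$ and of finite ``$\Psi$-length'', and that these bounds are uniform as $\lambda$ ranges over the sector $\Lambda_{a,\varepsilon}(\theta)$. This is supplied by the global trajectory structure of $\varpi_{a,\theta}$: Jenkins' theorem and Theorem \ref{main theorem Thabet} describe the half-plane and strip domains, and the anti-Stokes foliation provides the required canonical paths (anti-Stokes lines are themselves canonical). One then concatenates monotone arcs across the at most two strip domains of a canonical domain, exactly as in the proof of Proposition \ref{main result2}, to realize a canonical path out of every point of $D_{\varepsilon}$. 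Finally, the companion solution $y_{1}$ with the $+h$ exponential, and the bound on $\phi_{1}$, follow by running the same argument along $\gamma_{\varepsilon}^{-}$ (equivalently, by replacing $\lambda$ with $-\lambda$), which yields the dual pair in \eqref{uniform asymp}.
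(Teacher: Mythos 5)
Your proposal follows essentially the same route as the paper: the Liouville transformation $W=p_a^{1/4}y$ in the variable $\zeta=h$, the substitution $W=e^{-\lambda\zeta}(1+\phi_2)$ leading to a Volterra integral equation solved by successive approximations, with the monotonicity of $\Re\zeta$ along the canonical path controlling the kernel and the convergence of $\int|\Psi|\,|d\zeta|$ (the paper's Lemma \ref{integral convergence}) making $r_\varepsilon$ finite. The only cosmetic differences are that you invoke Olver's exponential (Neumann-series) bound $e^{r_\varepsilon/r}-1\leq r_\varepsilon/(r-r_\varepsilon)$ where the paper sums the geometric series $\sum(r_\varepsilon/r)^n\Vert w_1\Vert$ directly, and you argue $r_\varepsilon<\infty$ via $\Psi=O(\zeta^{-2})$ rather than the paper's explicit partial-fraction estimate; both are equivalent.
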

Before proving this theorem, we need these two lemmas.
\begin{lemma}
\label{first lemma}$h$ is a one-to-one conformal map from $D$ to $h(D)$.
\end{lemma}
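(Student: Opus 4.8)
Conformality is the easy half: since $D$ is simply connected and contains no turning points, $p_{a}$ has a single-valued holomorphic square root on $D$, so the chosen branch $h(z)=e^{i\theta}\int_{z_{0}}^{z}\sqrt{p_{a}(t)}\,dt$ is a well-defined holomorphic function on $D$ with $h'(z)=e^{i\theta}\sqrt{p_{a}(z)}\neq 0$ everywhere; thus $h$ is a local conformal homeomorphism, and only injectivity requires an argument. The plan is to upgrade ``local homeomorphism'' to ``biholomorphism onto $h(D)$'' by a covering-space argument. Set $\Omega:=h(D)$. By the very description of a canonical domain recalled above, $\Omega$ is the plane $\mathbb{C}$ with finitely many vertical cuts, each cut being a vertical slit running off to $\infty$; such a region is simply connected. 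If one can show that $h\colon D\to\Omega$ is a \emph{proper} map, then a proper local homeomorphism is a finite-sheeted covering map, and a connected covering of a simply connected space is a homeomorphism; in particular $h$ is injective, as claimed.

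So the substance of the proof is the properness of $h$, which I would read off the structure of $D$ supplied by Jenkins' theorem as recalled in Subsection~\ref{sub1}: $D$ is a union of at most two half-plane domains and at most two strip domains, glued to one another along boundary Stokes lines, and on each such piece $P$ the branch $h$ restricts to a conformal bijection of $P$ onto a genuine vertical half-plane $\{\Re\zeta>c\}$ (resp. $\{\Re\zeta<c\}$) or onto a vertical strip $\{c_{1}<\Re\zeta<c_{2}\}$. On each piece this restriction is already proper onto its half-plane or strip image. The images of two adjacent pieces lie on opposite sides of the common vertical line $\Re\zeta=\mathrm{const}$ that is the $h$-image of their shared boundary Stokes line, so they have pairwise disjoint interiors and, after adjoining those vertical lines with the cuts deleted, together exhaust $\Omega$. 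Consequently a sequence $z_{n}\in D$ leaving every compact subset of $D$ either approaches $\partial D$ — a union of Stokes lines, which $h$ sends onto the cuts $\partial\Omega$ — or escapes to $\infty$, in which case $h(z_{n})\to\infty$ because $\sqrt{p_{a}(z)}\sim z^{3/2}$ forces $h(z)\sim\frac{2}{5}e^{i\theta}z^{5/2}\to\infty$; either way $h(z_{n})$ leaves every compact subset of $\Omega$, so $h$ is proper.

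\textbf{Main obstacle.} The delicate point is the bookkeeping in the gluing step: verifying that the half-plane and strip images, patched along the vertical lines that carry the cuts, tile $\Omega$ with no overlap and with the cuts issuing exactly from the $h$-images of the turning points on $\partial D$, so that no two distinct boundary points of different pieces are identified. This is precisely where one uses that each boundary Stokes line emanates from a turning point and that, by Jenkins' theorem as quoted above, no further strip domain is hidden inside $D$. A secondary, more routine check is that the monotonicity of $\Im h$ along vertical trajectories and of $\Re h$ along horizontal (anti-Stokes) trajectories recalled after \eqref{eq integ} is compatible with the chosen orientations of the cuts, which is what guarantees that two neighbouring pieces land on opposite sides of a common vertical line rather than overlapping. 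Once this tiling picture is in place, injectivity on each piece (immediate from the half-plane/strip conformal models) together with interior-disjointness of the images and the compatibility of $h$ across shared boundaries yields injectivity of $h$ on all of $D$; the covering-map reformulation above gives the same conclusion with slightly less casework.
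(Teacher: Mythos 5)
Your argument is correct in substance but follows a genuinely different route from the paper's. The paper disposes of injectivity in three lines using the orthogonal foliations: if $h(z_{1})=h(z_{2})$ then $z_{1}$ and $z_{2}$ lie on the same vertical trajectory (a level set of $\Re h$) and on the same horizontal trajectory (a level set of $\Im h$); since $\Re h$ is strictly monotone along a horizontal trajectory while constant along a vertical one (and dually for $\Im h$), two trajectories from the two families meet in at most one point, whence $z_{1}=z_{2}$; conformality is again just $h'\neq 0$. Your route — local homeomorphism plus properness, hence a covering map of the simply connected slit plane, hence a homeomorphism — is heavier machinery, and, as you yourself observe, the covering-space step becomes redundant once the tiling of $h(D)$ by the half-plane and strip images is in place: at that point injectivity follows directly from injectivity on each piece together with disjointness of their images, while the properness argument itself already presupposes $h(\partial D)\cap h(D)=\emptyset$, i.e.\ the tiling. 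Both proofs ultimately rest on the same structural input (the decomposition of $D$ into at most two half-planes and two strips and the monotonicity of $\Re h$ and $\Im h$ along the two trajectory families); to be fair, the paper's version leaves implicit the companion fact that the level sets of $\Re h$ and $\Im h$ inside $D$ are connected, which is essentially the same global bookkeeping you isolate as your main obstacle. What your approach buys is an explicit description of $h(D)$ and of the provenance of the cuts; what the paper's buys is brevity and independence from the properness/covering formalism.
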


\begin{proof}
$h(z_{1})=h(z_{2})\Leftrightarrow \Re(h(z_{1}))=\Re(h(z_{2}))$
and $\Im(h(z_{1}))=\Im(h(z_{2}))$. This implies that $z_{1}$ and 
$z_{2}$ are two points from the same horizontal and vertical trajectories of the quadratic differential $\exp (2i\theta )(z^{2}-1)(z-a)dz^{2}$. From the local and global behavior of trajectories of a polynomial quadratic differential (the intersection of horizontal and vertical trajectories is a single point), we deduce that $z_{1}=z_{2}$. To prove that $h$ is conformal, it is sufficient to note that $h^{\prime }(z)=\exp (i\theta )\sqrt{(z-a)(z^{2}-1)}\neq 0$ for all $z\in D$.
\end{proof}

\begin{lemma}
\label{integral convergence}
The integral $\dsp\int_{\gamma _{\varepsilon
}^{+}}\left\vert \frac{5}{16}\frac{\left( p_{a}^{\prime }(t)\right) ^{2}}{
\left( p_{a}(t\right) )^{3}}-\frac{\left( p_{a}^{\prime \prime }(t)\right) }{
4\left( p_{a}(t)\right) ^{2}}\right\vert \sqrt{\left\vert
p_{a}(t)\right\vert }\left\vert dt\right\vert $ is convergent.
\end{lemma}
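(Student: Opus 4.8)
The plan is to exploit that the integrand is a genuinely, algebraically decaying density at $\infty$ and that the canonical path $\gamma_\varepsilon^+$ escapes to $\infty$ essentially radially, so that convergence reduces to the elementary fact $\int^{\infty}r^{-7/2}\,dr<\infty$ (equivalently $\int^{\infty}u^{-2}\,du<\infty$ after the Liouville change of variable $\zeta=h$).

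First I would record the relevant polynomial asymptotics. With $p_a(z)=(z-a)(z^2-1)=z^3-az^2-z+a$ one has $p_a'(z)=3z^2-2az-1$ and $p_a''(z)=6z-2a$, whence, as $z\to\infty$,
\[
\frac{5}{16}\frac{(p_a'(z))^{2}}{(p_a(z))^{3}}-\frac{p_a''(z)}{4(p_a(z))^{2}}=\frac{21}{16}\,z^{-5}+O(z^{-6}),\qquad \sqrt{|p_a(z)|}=O(|z|^{3/2}),
\]
so the integrand, viewed as a density against $|dz|$, is $O(|z|^{-7/2})$ as $z\to\infty$ inside $D_\varepsilon$. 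Moreover $p_a$ has no zero on $D_\varepsilon$: its only zeros are the three turning points, whose $\varepsilon$-neighbourhoods are excised from $D_\varepsilon$, while $|p_a(z)|\to\infty$ as $z\to\infty$; hence the integrand is continuous and bounded on every bounded subset of $D_\varepsilon$.

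Next I would split $\gamma_\varepsilon^+$ at a large radius $R$. The piece $\gamma_\varepsilon^+\cap\{|z|\le R\}$ is a rectifiable arc along which the integrand is bounded, so it contributes a finite amount. For the remaining tail I would pass to the Liouville variable $\zeta=h(\lambda,z_0,z)$, which by Lemma \ref{first lemma} maps $D$ conformally onto $\mathbb{C}$ with finitely many vertical cuts; since $h'(z)=\lambda\sqrt{p_a(z)}$ we have $\sqrt{|p_a(z)|}\,|dz|=|\lambda|^{-1}|d\zeta|$, and from $h(z)\sim\tfrac{2\lambda}{5}z^{5/2}$ the decay above turns into
\[
\Bigl|\tfrac{5}{16}\tfrac{(p_a'(z))^{2}}{(p_a(z))^{3}}-\tfrac{p_a''(z)}{4(p_a(z))^{2}}\Bigr|\sqrt{|p_a(z)|}\,|dz|=O\!\left(|\zeta|^{-2}\right)|d\zeta|\qquad(|\zeta|\to\infty).
\]
If $\gamma_\varepsilon^+$ is chosen to agree, outside a compact set, with an anti-Stokes line — a horizontal critical trajectory, along which $\Im h$ is constant — then its image is a horizontal ray $\{\Im\zeta=c,\ \Re\zeta\ge u_0\}$, and $\int_{u_0}^{\infty}(u^{2}+c^{2})^{-1}\,du<\infty$ gives the convergence.

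The main obstacle is to treat a general canonical path rather than an anti-Stokes line (this uniformity is also what is needed for the supremum defining $r_\varepsilon$ in Theorem \ref{WKB theorem}): a priori only $\Re h$ is monotone along $\gamma_\varepsilon^+$, so its image in the $\zeta$-plane need not be a horizontal ray and could have large length in each dyadic annulus. The point to establish is that any canonical path escaping to $\infty$ in $D_\varepsilon$ with $\Re h\to+\infty$ must eventually leave through one of the anti-Stokes directions $\alpha_j$ of \eqref{critical directions} along which $\Re h\to+\infty$, and remain asymptotic to it — using the description of trajectories near the pole at $\infty$ recalled in Section \ref{sub1} — so that its image is asymptotic to a horizontal ray and the bound $\int|\zeta|^{-2}\,|d\zeta|<\infty$ persists, uniformly in the path.
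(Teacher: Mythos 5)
Your core estimate is the same one the paper uses. The paper's proof writes $p_a'/p_a=\frac{1}{t-1}+\frac{1}{t+1}+\frac{1}{t-a}=\tau(t)$ and $p_a''/p_a=\tau'+\tau^2$, deduces $|\tau(t)|\le 3/(|t|-R)$ and $|\tau'(t)|\le 3/(|t|-R)^2$ for $|t|>R\ge|a|$, and bounds the integrand by $M\left((|t|-R)^2|t|^{3/2}\right)^{-1}=O(|t|^{-7/2})$ — exactly your leading-order computation $\frac{21}{16}z^{-5}\cdot O(|z|^{3/2})$, packaged as an inequality rather than an asymptotic expansion. The paper then simply declares convergence; it does not pass to the Liouville variable and says nothing about the geometry of $\gamma_\varepsilon^+$. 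So the obstacle you isolate is genuine and is present in the paper's own proof as well: monotonicity of $\Re h$ along the path does not by itself bound the arclength of $\gamma_\varepsilon^+$ in an annulus $\{R'\le|t|\le 2R'\}$ (equivalently, the length of its $h$-image in a vertical strip), so neither $\int|t|^{-7/2}\,|dt|$ nor your $\int|\zeta|^{-2}\,|d\zeta|$ is automatically finite. Moreover the uniformity over paths that you mention parenthetically is really needed, since $r_\varepsilon$ in Theorem \ref{WKB theorem} is a supremum over the whole family $\digamma$.

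However, your proposed repair of that step is asserted rather than proved, and as stated it does not follow from the definitions: a canonical path is \emph{any} path along which $\Re h$ is monotone, and such a path need not be asymptotic to an anti-Stokes direction — its image in the $\zeta$-plane can oscillate vertically with unbounded total variation while $\Re\zeta$ increases, which defeats the bound $\int|\zeta|^{-2}\,|d\zeta|<\infty$. The clean way to close the gap is not to prove that all canonical paths are tame, but to restrict the class: take $\gamma_\varepsilon^+$ (and the family $\digamma$ over which $r_\varepsilon$ is defined) to consist of paths whose $h$-images are horizontal rays, or broken lines made of a vertical segment of bounded height followed by a horizontal ray — precisely the paths $\varrho_{1,\varepsilon}^{\bullet}\cup\varrho_{1,\varepsilon}^{\bullet\bullet}$ used later in the proof of Proposition \ref{domain zeros}. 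For that class your change of variable gives $\int|\zeta|^{-2}\,|d\zeta|<\infty$ uniformly, which is what the application actually requires and slightly more than the paper's proof establishes.
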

\begin{proof}
\begin{eqnarray*}
\left\vert \frac{5}{16}\frac{\left( p_{a}^{\prime }(t)\right) ^{2}}{\left(
p_{a}(t\right) )^{3}}-\frac{\left( p_{a}^{\prime \prime }(t)\right) }{
4\left( p_{a}(t)\right) ^{2}}\right\vert \sqrt{\left\vert
p_{a}(t)\right\vert } &=&\left\vert \frac{5}{16}\left( \frac{p_{a}^{\prime
}(t)}{p_{a}(t)}\right) ^{2}-\frac{p_{a}^{\prime \prime }(t)}{4p_{a}(t)}
\right\vert \left( \left\vert p_{a}(t)\right\vert \right) ^{\frac{-1}{2}} \\
&\leq &\left( \left\vert \frac{5}{16}\left( \frac{p_{a}^{\prime }(t)}{
p_{a}(t)}\right) ^{2}\right\vert +\left\vert \frac{p_{a}^{\prime \prime }(t)
}{4p_{a}(t)}\right\vert \right) \left( \left\vert p_{a}(t)\right\vert
\right) ^{\frac{-1}{2}}
\end{eqnarray*}
for all $t\in \gamma _{\varepsilon }^{+}$. The infinite canonical path $
\gamma _{\varepsilon }^{+}$ start at finite point in $D_{\varepsilon }$ and
end to infinity, so we should estimate this integral near $\infty $. Let $R>1
$ large enough such that $\left\vert a\right\vert \leq $ $R$. Writing
\begin{eqnarray*}
\frac{p_{a}^{\prime }(t)}{p_{a}(t)} &=&\frac{1}{t-1}+\frac{1}{t+1}+\frac{1}{
t-a}=\tau (t) \\
\frac{p_{a}^{\prime \prime }(t)}{p_{a}(t)} &=&\left( \frac{p_{a}^{\prime }(t)
}{p_{a}(t)}\right) ^{\prime }+\left( \frac{p_{a}^{\prime }(t)}{p_{a}(t)}
\right) ^{2}=\tau ^{\prime }(t)+\tau ^{2}(t)
\end{eqnarray*}
we have for  $\left\vert t\right\vert >R$, $\left\vert \tau (t)\right\vert
\leq \frac{3}{\left\vert t\right\vert -R}$ and $\left\vert \tau ^{\prime
}(t)\right\vert \leq \frac{3}{\left( \left\vert t\right\vert -R\right) ^{2}}.
$ Consequently, 
\begin{equation*}
\left\vert \frac{5}{16}\frac{\left( p_{a}^{\prime }(t)\right) ^{2}}{\left(
p_{a}(t\right) )^{3}}-\frac{\left( p_{a}^{\prime \prime }(t)\right) }{
4\left( p_{a}(t)\right) ^{2}}\right\vert \sqrt{\left\vert
p_{a}(t)\right\vert }\leq \frac{M}{\left( \left\vert t\right\vert -R\right)
^{2}\left\vert t\right\vert ^{\frac{3}{2}}}
\end{equation*}
which guaranteed the convergence of the integral. This proves the lemma. 
\end{proof}

\begin{proof}[Proof of (\protect\ref{WKB theorem})]
By (\ref{first lemma}), we can use the Liouville transform 
\begin{equation*}
W(\zeta )=p_{a}^{\frac{1}{4}}(h^{-1}(\zeta ))y(h^{-1}(\zeta ))
\end{equation*}
for $h(z)=\exp (i\theta )\int_{z_{0}}^{z}\sqrt{(t-a)(t^{2}-1)}dt=\zeta
\in h(D_{\varepsilon })$. Then the differential equation (\ref{first ODE})
becomes
\begin{equation}
\overset{\bullet \bullet }{W}(\zeta )=r^{2}W(\zeta )+\psi (\zeta )W(\zeta )
\label{WKB1}
\end{equation}
where 
\begin{eqnarray*}
\psi (\zeta ) &=&\alpha \circ h^{-1}(\zeta )=\exp (-i\theta )\left( \frac{5}{
16}\frac{\left( p_{a}^{\prime }\right) ^{2}}{\left( p_{a}\right) ^{3}}-\frac{
\left( p_{a}^{\prime \prime }\right) }{4\left( p_{a}\right) ^{2}}\right)
\circ \left( h^{-1}(\zeta )\right)  \\
\alpha (z) &=&\exp (-i\theta )\left( \frac{5}{16}\frac{\left( p_{a}^{\prime
}\right) ^{2}}{\left( p_{a}\right) ^{3}}-\frac{\left( p_{a}^{\prime \prime
}\right) }{4\left( p_{a}\right) ^{2}}\right) 
\end{eqnarray*}
where the prime stands for the differentiation with respect to $z$ and the $
\bullet $ for the differentiation with respect to $\zeta $. 

In (\ref{WKB1}) we substitute 
\begin{equation*}
W(\zeta )=\exp (-r\zeta )(1+w(\zeta ))
\end{equation*}
and obtain
\begin{equation}
\overset{\bullet \bullet }{w}(\zeta )-2\overset{\bullet }{rw}(\zeta )=\psi
(\zeta )(w(\zeta )+1)  \label{WKB2}
\end{equation}
To solve this inhomogeneous differential equation, the term $\psi (\zeta
)(w(\zeta )+1)$ is regarded a given function. Applying the method of
variation of constants, we obtain 
\begin{equation}
w(\zeta )=\frac{1}{2\lambda }\int_{\zeta }^{\infty }\left[ \left( 1-\exp
(2r(\zeta -t))\right) \psi (t)(w(t)+1)\right] dt.  \label{WKB3}
\end{equation}
Conversely, it is easy to verify directly that every analytic solution of
this Volterra integral equation satisfies (\ref{WKB2}), for details see \cite
[chapter 6]{olver}. We solve this integral equation by the method of successive approximations:
set $w_{0}=0$ and 
\begin{equation}
w_{n}(\zeta )=\frac{1}{2\lambda }\int_{\zeta }^{\infty }\left[ \left( 1-\exp
(2r(\zeta -t))\right) \psi (t)(w_{n-1}(t)+1)\right] dt  \label{volterra}
\end{equation}
where $n=1,2,3..$. Let $\left\Vert w_{n}\right\Vert _{\gamma _{\varepsilon
}^{+}}=\underset{\zeta \in h(\gamma _{\varepsilon }^{+})}{\sup }\left\vert
w_{n}(\zeta )\right\vert $. Note that for $\zeta \in h(\gamma _{\varepsilon
}^{+})$, we have $\xi _{0}\leq \Re\zeta \rightarrow +\infty $ for some
real $\xi _{0}$ and $\left\vert \Im\zeta \right\vert \leq c$. By
definition of infinite canonical path (\ref{admissible remark}), $t\mapsto 
\Re(t)$ is a non-decreasing function on $\gamma _{\varepsilon }^{+},$
so we can assume that $\Re(\zeta -t)\leq 0$. Then we have
\begin{equation*}
\left\Vert w_{n+1}-w_{n}\right\Vert _{\gamma _{\varepsilon }^{+}}\leq \frac{1
}{r}\left\Vert w_{n}-w_{n-1}\right\Vert _{\gamma _{\varepsilon
}^{+}}\int_{\gamma _{\varepsilon }^{+}}\left\vert \alpha (t)\right\vert 
\sqrt{\left\vert p_{a}(t)\right\vert }\left\vert dt\right\vert 
\end{equation*}
Let $\digamma $ denote the set of all infinite canonical path $\gamma
_{\varepsilon }^{+}$ in $D_{\varepsilon }$ and $0<s<\inf \left\{ \left\vert
z_{1}-z_{0}\right\vert ;z_{1}\in \digamma ,z_{0}\in \Upsilon \right\} ,$
where $\Upsilon $ is the set of turning points. By Lemma \ref{integral
convergence}, the integrals  $\left\{ \int_{\gamma _{\varepsilon
}^{+}}\left\vert \alpha (t)\right\vert \sqrt{\left\vert p_{a}(t)\right\vert }
\left\vert dt\right\vert ,\gamma _{\varepsilon }^{+}\in \digamma \right\} $
are  bounded by a constant which depends only on $s$, therefore $
r_{\varepsilon }=\underset{\gamma _{\varepsilon }^{+}\in \digamma }{\sup }
\left( \int_{\gamma _{\varepsilon }^{+}}\left\vert \alpha (t)\right\vert 
\sqrt{\left\vert p_{a}(t)\right\vert }\left\vert dt\right\vert \right) $ is
well defined and finite. By induction, we deduce that 
\begin{equation}
\left\Vert w_{n+1}-w_{n}\right\Vert _{\gamma _{\varepsilon }^{+}}\leq \frac{
\left( r_{\varepsilon }\right) ^{n}}{r^{n}}\left\Vert w_{1}\right\Vert
_{\gamma _{\varepsilon }^{+}}  \label{uniform-convergence}
\end{equation}
and so for $\left\vert \lambda \right\vert =r>r_{\varepsilon }$, the series $
\underset{n\geq 1}{\sum }\left( w_{n+1}-w_{n}\right) $ converge uniformly
(on the set $h(K^{\ast })$) to a solution $\phi _{2}$ to the integral
equation (\ref{WKB3}) which satisfies  $\left\vert \phi _{2}(z,\lambda
)\right\vert \leq \underset{n=1}{\overset{+\infty }{\sum }}\left\Vert
w_{n+1}-w_{n}\right\Vert _{\gamma _{\varepsilon }^{+}}\leq c_{2}\frac{
r_{\varepsilon }}{r-r_{\varepsilon }}$ for all $z\in \gamma _{\varepsilon
}^{+}$. This achieves the proof.
\end{proof}
\begin{corollary}\label{asymp derive}
$\phi _{2}^{\prime }(z,\lambda )\rightarrow 0$, as $
z\rightarrow \infty ,z\in \gamma _{\varepsilon }^{+}$ for $\lambda \in
\Lambda _{a,\varepsilon }(\theta )$ or as $\left\vert \lambda \right\vert
\rightarrow +\infty $, uniformly for $z\in D_{\varepsilon }$.
\end{corollary}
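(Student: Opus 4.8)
The plan is to differentiate the Volterra equation \eqref{WKB3} directly in the Liouville variable $\zeta$ and to use two structural features of its kernel $1-\exp(2r(\zeta-t))$: it vanishes on the diagonal $t=\zeta$, which kills the boundary term produced by differentiating under the integral sign, and its $\zeta$-derivative $-2r\exp(2r(\zeta-t))$ carries the factor $2r$, which — after one further integration by parts in $t$ — is precisely what supplies the gain in $1/\lambda$. I keep the paper's convention and write $w(\zeta)$ for the solution of \eqref{WKB3}, so that $\phi_2(z,\lambda)=w(h(z))$ with $h(z)=e^{i\theta}\int_{z_0}^{z}\sqrt{p_a}$, and hence $\phi_2'(z,\lambda)=\dot w(h(z))\,h'(z)$ with $h'(z)=e^{i\theta}\sqrt{p_a(z)}$ (the dot denoting $d/d\zeta$ as in the proof of Theorem \ref{WKB theorem}); the companion error term is handled identically with $\gamma_\varepsilon^{+}$ replaced by $\gamma_\varepsilon^{-}$.

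First I would record, differentiating \eqref{WKB3},
\begin{equation*}
\dot w(\zeta)=-\frac{r}{\lambda}\int_\zeta^{\infty}e^{2r(\zeta-t)}\,\psi(t)\,(w(t)+1)\,dt ,
\end{equation*}
the diagonal term being absent since $1-e^{2r(\zeta-\zeta)}=0$. Using $|r/\lambda|=1$, the inequality $|e^{2r(\zeta-t)}|\le 1$ valid along any infinite canonical path (since $\Re t$ is non-decreasing there), and the bound on $|w+1|$ from Theorem \ref{WKB theorem} when $|\lambda|>r_\varepsilon$, one gets
\begin{equation*}
\bigl|\dot w(h(z))\bigr|\le C\int_{z}^{\infty}\Bigl|\tfrac{5}{16}\tfrac{(p_a'(t))^{2}}{(p_a(t))^{3}}-\tfrac{p_a''(t)}{4(p_a(t))^{2}}\Bigr|\sqrt{|p_a(t)|}\,|dt| ,
\end{equation*}
the integral running over the tail of $\gamma_\varepsilon^{+}$ beyond $z$. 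By the estimate obtained in the proof of Lemma \ref{integral convergence} this tail is $O(|z|^{-5/2})$, while $h'(z)=e^{i\theta}\sqrt{p_a(z)}=O(|z|^{3/2})$; multiplying, $\phi_2'(z,\lambda)=O(|z|^{-1})\to 0$ as $z\to\infty$ along $\gamma_\varepsilon^{+}$, for every fixed $\lambda\in\Lambda_{a,\varepsilon}(\theta)$. This settles the first regime.

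For the regime $|\lambda|\to\infty$ I would integrate by parts once in the formula for $\dot w$. With $G(t)=\psi(t)(w(t)+1)$, and using $\partial_t\bigl(-\tfrac1{2r}e^{2r(\zeta-t)}\bigr)=e^{2r(\zeta-t)}$ together with $G(t)\to0$ along $\gamma_\varepsilon^{+}$, one obtains
\begin{equation*}
\dot w(\zeta)=-\frac1{2\lambda}\Bigl(G(\zeta)+\int_\zeta^{\infty}e^{2r(\zeta-t)}G'(t)\,dt\Bigr).
\end{equation*}
Back in the $z$-variable $\psi$ is bounded on $D_\varepsilon$ (its poles are at the turning points, whose $\varepsilon$-neighborhoods are deleted) and decays like $|z|^{-5}$ at $\infty$, and $G'$, written out through $h'$, is absolutely integrable along $\gamma_\varepsilon^{+}$ with tail $O(|z|^{-5})$, by the same partial-fraction bounds used for Lemma \ref{integral convergence}. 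Hence $|\dot w(h(z))|\le C|\lambda|^{-1}\min(1,|z|^{-5})$, and multiplying by $|h'(z)|=\sqrt{|p_a(z)|}$ gives $|\phi_2'(z,\lambda)|\le C'|\lambda|^{-1}$ uniformly for $z\in D_\varepsilon$, with the finer local rate $O(|\lambda|^{-1}|z|^{-7/2})$ near $\infty$. Together with the previous paragraph this proves Corollary \ref{asymp derive}.

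I expect the main obstacle to be exactly the factor $h'(z)=e^{i\theta}\sqrt{p_a(z)}$ introduced by the chain rule, which grows like $|z|^{3/2}$: a crude bound on $\dot w$ does not beat it, so one must genuinely use the algebraic decay rate of the error-control integrand from Lemma \ref{integral convergence} (and the matching decay of $G'$), together with the extra integration by parts that turns the harmless modulus-one factor $r/\lambda$ into a true $1/\lambda$.
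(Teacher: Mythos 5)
Your argument is correct, but it takes a genuinely different route from the paper's. The paper never differentiates the Volterra equation \eqref{WKB3} as a whole and never integrates by parts: it differentiates each Picard iterate \eqref{volterra} to get $\dot w_{n}(\zeta)=-\int_{\zeta}^{\infty}e^{2r(\zeta-t)}\psi(t)(w_{n-1}(t)+1)\,dt$, bounds $\Vert\dot w_{n+1}-\dot w_{n}\Vert_{\gamma_{\varepsilon}^{+}}\le r_{\varepsilon}\Vert w_{n}-w_{n-1}\Vert_{\gamma_{\varepsilon}^{+}}$, and sums the resulting geometric series to obtain $|\dot\phi_{2}|\le c_{2}r_{\varepsilon}^{2}/(r-r_{\varepsilon})$; the $z\to\infty$ statement is then read off from uniform convergence plus the vanishing of each tail integral. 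The more substantive difference is \emph{which} derivative is controlled: the paper bounds the $\zeta$-derivative $\dot\phi_{2}=\dot w$ (it writes $\phi_{2}'(z,\lambda)=\dot{\phi}_{2}(h^{-1}(\zeta),\lambda)$ with no chain-rule factor), whereas you bound the genuine $z$-derivative $\dot w(h(z))\,h'(z)$. Since $|h'(z)|=\sqrt{|p_{a}(z)|}$ is unbounded on $D_{\varepsilon}$, your statement is strictly stronger, and it is the one actually invoked downstream (in Corollary \ref{WKB derivative} and Proposition \ref{finite zeros} the prime is a $z$-derivative); your explicit use of the tail rate $O(|z|^{-5/2})$ from Lemma \ref{integral convergence} to beat the $|z|^{3/2}$ growth of $h'$ is precisely the step the paper leaves implicit. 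What the paper's route buys is a clean quantitative constant $c_{2}r_{\varepsilon}^{2}/(r-r_{\varepsilon})$ with no new integrability hypotheses; what yours buys is the correct treatment of the chain rule and sharper decay in $z$.

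Two small repairs to your second regime. First, $G'=\dot\psi\,(w+1)+\psi\,\dot w$ contains $\dot w$ itself, so its integrability is not immediate from ``partial-fraction bounds'' alone; you need the a priori bound $|\dot w(\zeta)|\le\int_{\zeta}^{\infty}|\psi(t)|\,|w(t)+1|\,|dt|\le C r_{\varepsilon}$ from the un-integrated formula before the integration by parts closes the loop. Second, with only that crude bound the $\psi\dot w$ contribution gives the tail of $\int_{\zeta}^{\infty}|G'|\,|dt|$ the rate $O(|z|^{-5/2})$ (it is controlled by $\int|\alpha|\sqrt{|p_{a}|}\,|dz|$), not $O(|z|^{-5})$; this still yields $|\phi_{2}'(z,\lambda)|\le C'|\lambda|^{-1}$ uniformly on $D_{\varepsilon}$ after multiplying by $|h'(z)|=O(|z|^{3/2})$, so the corollary is proved, but the finer rate $O(|\lambda|^{-1}|z|^{-7/2})$ you announce is not justified as stated.
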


\begin{proof}
By the same notations of the previous proof, we have $\phi _{2}(z,\lambda )= 
$ $\phi _{2}(h^{-1}(\zeta ),\lambda )=\underset{n=1}{\overset{+\infty }{\sum 
}}\left( w_{n+1}-w_{n}\right) $. As the serie $\underset{n\geq 1}{\sum }
\left( w_{n+1}-w_{n}\right) $ converge uniformly we deduce: 
\begin{equation*}
\phi _{2}^{\prime }(z,\lambda )=\overset{\bullet }{\phi }_{2}(h^{-1}(\zeta
),\lambda )=\underset{n=1}{\overset{+\infty }{\sum }}\left( \overset{\bullet 
}{w}_{n+1}-\overset{\bullet }{w}_{n}\right)
\end{equation*}
By differentiating (\ref{WKB3},\ref{volterra}), we obtain:
\begin{eqnarray*}
\overset{\bullet }{w}(\zeta ) &=&-\int_{\zeta }^{\infty }\exp (2r(\zeta
-t))\psi (t)(w(t)+1)dt \\
\overset{\bullet }{w}_{n}(\zeta ) &=&-\int_{\zeta }^{\infty }\exp (2r(\zeta
-t))\psi (t)(w_{n-1}(t)+1)dt
\end{eqnarray*}
and so 
\begin{equation*}
\left\Vert \overset{\bullet }{w}_{n+1}-\overset{\bullet }{w}_{n}\right\Vert
_{\gamma _{\varepsilon }^{+}}\leq \left\Vert w_{n}-w_{n-1}\right\Vert
_{\gamma _{\varepsilon }^{+}}\int_{\gamma _{\varepsilon }^{+}}\left\vert
\alpha (t)\right\vert \sqrt{\left\vert p_{a}(t)\right\vert }\left\vert
dt\right\vert \leq r_{\varepsilon }\left\Vert w_{n}-w_{n-1}\right\Vert
_{\gamma _{\varepsilon }^{+}}.
\end{equation*}
which prove that the series $\underset{n\geq 1}{\sum }\left( \overset{\bullet 
}{w}_{n+1}-\overset{\bullet }{w}_{n}\right) $ converges uniformly (on the set 
$h(K^{\ast })$) and 
\begin{equation*}
\left\vert \phi _{2}^{\prime }(z,\lambda )\right\vert \leq \underset{n=1}{
\overset{+\infty }{\sum }}\left\Vert \overset{\bullet }{w}_{n+1}-\overset{
\bullet }{w}_{n}\right\Vert _{\gamma _{\varepsilon }^{+}}\leq c_{2}\frac{
r_{\varepsilon }^{2}}{r-r_{\varepsilon }}
\end{equation*}
for all $z\in \gamma _{\varepsilon }^{+}$. This achieves the proof.
\end{proof}

\begin{remark}

\begin{enumerate}
\item With the same notations as above, and $\gamma _{\varepsilon }^{-}$ is
an infinite canonical path in $D_{\varepsilon }$ such that $h(\theta
,z_{0},z)\downarrow -\infty $, as $z\rightarrow \infty ,z\in \gamma ^{-},$
the ODE (\ref{first ODE}) admit a solution $y_{1}(z,\lambda )$, which
satisfies
\begin{equation*}
y_{1}(z,\lambda )=(p_{a}(z))^{-\frac{1}{4}}\exp (h(\lambda ,z_{0},z))[1+\phi
_{1}(z,\lambda )],
\end{equation*}
where $\left\vert \phi _{1}(z,\lambda )\right\vert \leq c_{1}\frac{
r_{\varepsilon }}{r-r_{\varepsilon }}$ for all $z\in \gamma _{\varepsilon
}^{-}$. The constant $c_{1}$ independent of $\lambda $ and $r_{\varepsilon }$
is as in (\ref{WKB theorem}).
\item Let $\gamma ^{\pm }(\theta )$ infinite canonical paths in the
canonical domain $D_{\varepsilon }$ of the quadratic differential $\exp
(2i\theta )(z^{2}-1)(z-a)dz^{2}$ and $K$ a compact subset of  $\mathbb{C}
\backslash \Sigma _{\theta }$. By (\ref{relation sigma}), it exists $\delta
_{\varepsilon ,K}>0$ such that for all $a\in K$ and $\lambda \in \mathbb{C}^{\ast }$ where $\arg \lambda \in \left\{ \alpha \in \lbrack 0,\frac{\pi }{2}
[;\,\,\left\vert \theta -\alpha \right\vert \leq \delta _{\varepsilon
,K}\right\} $, the quadratic differential $\lambda ^{2}(z^{2}-1)(z-a)dz^{2}$
has a canonical domain in which $\gamma ^{\pm }$ still infinite canonical.
We conclude that WKB asymptotic formulas with dual behaviors still valid
for $\lambda $ in a sector $\Lambda _{a,\varepsilon }(\theta
)=\left\{ \lambda \in \mathbb{C}^{\ast };\,\,\left\vert \arg \lambda -\theta \right\vert \leq \delta
_{\varepsilon ,K};\,\,\left\vert \lambda \right\vert >r_{\varepsilon
}\right\} $ and all $a\in K$. This result improves the validity of WKB method
for cubic oscillator as one turning point moves in a compact set keeping the
remaining turning points fixed. 
\item If $a\in \mathcal{\chi }_{\theta },$the Stokes geometry is invariant
as $a$ varies in a connected component of $\mathcal{\chi }_{\theta }\diagdown
\left\{ t_{\theta },e_{\theta }\text{(if they exist)}\right\} $. We deduce that
WKB asymptotic formulas still valid as $a$ describe this connected
component, for $\left\vert \lambda \right\vert >r_{\varepsilon }$ and
uniformly for $z\in D_{\varepsilon }$.
\end{enumerate}
\end{remark}

\begin{corollary}\label{WKB derivative}
$y_{2}^{\prime }(z,\lambda )\sim -\lambda (p_{a}(z))^{
\frac{1}{4}}\exp (-h(\lambda ,z_{0},z))$ as $z\rightarrow \infty ,z\in
\gamma _{\varepsilon }^{+}$ for $\lambda \in \Lambda _{a,\varepsilon
}(\theta )$ (or as $\left\vert \lambda \right\vert \rightarrow +\infty ,$
uniformly for $z\in D_{\varepsilon }$).
\end{corollary}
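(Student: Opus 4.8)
The plan is to obtain the claim by straightforward differentiation of the WKB representation of $y_{2}$ from Theorem \ref{WKB theorem}, combined with the decay of the correction terms $\phi_{2}$ and $\phi_{2}'$ supplied by Theorem \ref{WKB theorem} and Corollary \ref{asymp derive}. First I would record, on the canonical domain $D_{\varepsilon}$ and with the branch of $h$ (hence of $(p_{a})^{1/4}$ and $(p_{a})^{1/2}$) fixed in Theorem \ref{WKB theorem}, that $\partial_{z}h(\lambda,z_{0},z)=\lambda\sqrt{p_{a}(z)}$, and differentiate
$$y_{2}(z,\lambda)=(p_{a}(z))^{-\frac14}\exp(-h(\lambda,z_{0},z))\,[1+\phi_{2}(z,\lambda)].$$
This produces three terms: one from differentiating $(p_{a})^{-1/4}$, one from differentiating $\exp(-h)$, and one from differentiating $1+\phi_{2}$. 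Factoring $-\lambda(p_{a}(z))^{1/4}\exp(-h(\lambda,z_{0},z))$ out of all three yields
$$y_{2}'(z,\lambda)=-\lambda(p_{a}(z))^{\frac14}\exp(-h(\lambda,z_{0},z))\left[(1+\phi_{2}(z,\lambda))\Big(1+\frac{p_{a}'(z)}{4\lambda\,(p_{a}(z))^{3/2}}\Big)-\frac{\phi_{2}'(z,\lambda)}{\lambda\,(p_{a}(z))^{1/2}}\right].$$

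Second, I would show that the bracketed factor tends to $1$ in each of the two regimes. For the error functions $\phi_{2}$ and $\phi_{2}'$ I invoke $\phi_{2}(z,\lambda)\to 0$ from Theorem \ref{WKB theorem} and $\phi_{2}'(z,\lambda)\to 0$ from Corollary \ref{asymp derive} — along $\gamma_{\varepsilon}^{+}$ for fixed $\lambda\in\Lambda_{a,\varepsilon}(\theta)$, and uniformly on $D_{\varepsilon}$ as $|\lambda|\to+\infty$, respectively, using also the quantitative bounds $|\phi_{2}|\le c_{2}r_{\varepsilon}/(r-r_{\varepsilon})$ and $|\phi_{2}'|\le c_{2}r_{\varepsilon}^{2}/(r-r_{\varepsilon})$. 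For the rational expressions I note the elementary estimates $p_{a}'(z)/(p_{a}(z))^{3/2}=O(|z|^{-1})$ and $(p_{a}(z))^{-1/2}=O(|z|^{-3/2})$ as $z\to\infty$, while on the bounded part of $D_{\varepsilon}$ the $\varepsilon$-neighborhoods of $\pm1,a$ are removed so $p_{a}$ stays bounded away from $0$; hence both $p_{a}'/(p_{a})^{3/2}$ and $(p_{a})^{-1/2}$ are bounded on all of $D_{\varepsilon}$ and vanish at $\infty$. Consequently, along $\gamma_{\varepsilon}^{+}$ with $\lambda$ fixed the three error contributions each tend to $0$, and for $|\lambda|\to+\infty$ uniformly on $D_{\varepsilon}$ each is $O(|\lambda|^{-1})$ times a bounded quantity, hence tends to $0$ uniformly. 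In either case the bracket $\to 1$, which is precisely the asserted asymptotic $y_{2}'(z,\lambda)\sim-\lambda(p_{a}(z))^{1/4}\exp(-h(\lambda,z_{0},z))$.

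The only point requiring care — and the ``main obstacle'', though a mild one — is the uniform control of $p_{a}'(z)/(p_{a}(z))^{3/2}$ and $(p_{a}(z))^{-1/2}$ over the \emph{unbounded} region $D_{\varepsilon}$; this is handled exactly as in Lemma \ref{integral convergence}, writing $p_{a}'/p_{a}=\tau(z)=\frac{1}{z-1}+\frac{1}{z+1}+\frac{1}{z-a}$ and using $|\tau(z)|\le 3/(|z|-R)$ for $|z|>R\ge\max(1,|a|)$. One should also note that the term-by-term differentiation of the uniformly convergent Volterra series defining $\phi_{2}$, which legitimises $\phi_{2}'$ and gives its decay, is exactly the computation already performed in the proof of Corollary \ref{asymp derive}, so no new analytic input beyond the results already established is needed.
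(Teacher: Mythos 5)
Your proposal is correct and follows essentially the same route as the paper: differentiate the WKB representation of $y_{2}$, divide (or factor) by the leading term $-\lambda(p_{a}(z))^{1/4}\exp(-h(\lambda,z_{0},z))$, and conclude from $\phi_{2}\to 0$, $\phi_{2}'\to 0$ (Corollary \ref{asymp derive}) and the decay of $p_{a}'(z)/(p_{a}(z))^{3/2}$ at infinity. You are in fact slightly more careful than the paper's displayed computation, which drops the $(p_{a}(z))^{1/2}$ in the denominator of the $\phi_{2}'/\lambda$ term and leaves the boundedness of the rational factors on the unbounded set $D_{\varepsilon}$ implicit.
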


\begin{proof}
By differentiation of the WKB formula (\ref{uniform asymp}) we have: 
\begin{eqnarray*}
y_{2}^{\prime }(z,\lambda ) &=&\frac{-1}{4}\frac{p_{a}^{\prime }(z)}{
(p_{a}(z))^{\frac{5}{4}}}\exp (-h(\lambda ,z_{0},z))[1+\phi _{2}(z,\lambda
)]- \\
&&\lambda (p_{a}(z))^{\frac{1}{4}}\exp (-h(\lambda ,z_{0},z))[1+\phi
_{2}(z,\lambda )]+(p_{a}(z))^{\frac{-1}{4}}\exp (-h(\lambda ,z_{0},z))\phi
_{2}^{\prime }(z,\lambda )
\end{eqnarray*}
as $z\in \gamma _{\varepsilon }^{+}$ for uniformly for $\lambda \in \Lambda
_{a,\varepsilon }(\theta )$ or as $\lambda \in \Lambda _{a,\varepsilon
}(\theta )$ uniformly for $z\in D_{\varepsilon }$. Hence we obtain:
\begin{equation*}
\frac{y_{2}^{\prime }(z,\lambda )}{-\lambda (p_{a}(z))^{\frac{1}{4}}\exp
(-h(\lambda ,z_{0},z))}=\frac{1}{4\lambda }\frac{p_{a}^{\prime }(z)}{
(p_{a}(z))^{\frac{3}{2}}}[1+\phi _{2}(z,\lambda )]+[1+\phi _{2}(z,\lambda )]-
\frac{\phi _{2}^{\prime }(z,\lambda )}{\lambda }.
\end{equation*}
We have $\phi _{2}(z,\lambda )\rightarrow 0$ as $z\rightarrow \infty ,z\in
\gamma _{\varepsilon }^{+}$ for $\lambda \in \Lambda _{a,\varepsilon
}(\theta )$. From (\ref{asymp derive}) $\phi _{2}^{\prime }(z,\lambda
)\rightarrow 0$ as $z\rightarrow \infty ,z\in \gamma _{\varepsilon }^{+}$
for $\lambda \in \Lambda _{a,\varepsilon }(\theta )$ (or as $\left\vert
\lambda \right\vert \rightarrow +\infty $, uniformly for $z\in
D_{\varepsilon }$). By the fact that $\frac{p_{a}^{\prime }(z)}{(p_{a}(z))^{
\frac{3}{2}}}\rightarrow 0$ as $z\rightarrow \infty $, we achieve the proof.
\end{proof}

\bigskip

\texttt{University of Gabes, Preparatory Institute of Engineering Studies,}

\texttt{Avenue Omar Ibn El Khattab 6029. Gabes. Tunisia.}

\texttt{Research Laboratory Mathematics and Applications LR17ES11, Gabes, Tunisia.}

\texttt{m.marwa.ens@gmail.com}

\texttt{braekgliia@gmail.com}

\texttt{chouikhi.mondher@gmail.com}

\texttt{faouzithabet@yahoo.fr}\bigskip

\end{document}